\documentclass[11pt,a4paper,reqno]{amsart}
\usepackage{textcomp}
\usepackage{mathtools}
\mathtoolsset{showonlyrefs}
\usepackage{amssymb}
\usepackage{amsmath,amssymb}
\usepackage{graphicx}
\usepackage[sort&compress]{natbib}
\usepackage[cmtip,all]{xy}

\usepackage[pagebackref=true]{hyperref}
\allowdisplaybreaks
\newtheorem{theorem}{Theorem}[section]
\newtheorem{lemma}[theorem]{Lemma}

\theoremstyle{definition}

\newtheorem{example}[theorem]{Example}

\theoremstyle{remark}
\newtheorem{remark}[theorem]{Remark}

\numberwithin{equation}{section}

\DeclareMathOperator{\sgn}{sgn}
\usepackage{color}

\begin{document}

% \title[short text for running head]{full title}
\title[Quasi-linear differential inequalities on graphs]{Liouville Type Results for Quasilinear Elliptic Inequalities Involving Gradient Terms on Weighted Graphs}

%    Only \author and \address are required; other information is
%    optional.  Remove any unused author tags.

%    author one information
% \author[short version for running head]{name for top of paper}
\author[A.T.Duong]{Anh Tuan Duong}
	\address{Anh Tuan Duong, Faculty of Mathematics and Informatics\\ Hanoi University of Science and Technology, 1 Dai Co Viet, Bach Mai, Hanoi, Vietnam}
	\email{tuan.duonganh@hust.edu.vn}
\author[Y.Liu]{Yao Liu}
\address{Yao Liu\\Universität Bielefeld, Fakultät für Mathematik, Postfach 100131, D-33501, Bielefeld, Germany}
\email{yaliu@math.uni-bielefeld.de}
\author[N.C.Minh]{Nguy\^en C\^ong Minh}
	\address{Nguy\^en C\^ong Minh,Faculty of Mathematics and Informatics\\ Hanoi University of Science and Technology, 1 Dai Co Viet, Bach Mai, Hanoi, Vietnam}
	\email{minh.nguyencong@hust.edu.vn}
%\thanks{\noindent Y. Liu was supported by China Scholarship Council (No.202406200019).}
\author[D.T.Quyet ]{Dao Trong Quyet}
\address{Dao Trong Quyet\\Academy of Finance, Dong Ngac, Ha Noi, Viet Nam}
\email{daotrongquyet@hvtc.edu.vn}

\author[Y. Sun]{Yuhua Sun}
\address{Yuhua Sun\\ School of Mathematical Sciences and LPMC, Nankai University, 300071
	Tianjin, P. R. China}
\email{sunyuhua@nankai.edu.cn}
\thanks{\noindent Y. Liu was supported by China Scholarship Council (No.202406200019). Y. Sun was supported by the National Natural Science Foundation of China (No.12371206) and supported by "the Fundamental Research Funds for the Central Universities, No.050-63253083.}

%    \subjclass is required.
\subjclass[2020]{Primary 35J62; Secondary 58J05, 31B10}
\keywords{Liouville theorem,  sharp volume condition, weighted graphs}
\date{}

\dedicatory{}
\maketitle
%    Abstract is required.
\begin{abstract}
	In this paper, we study the following quasi-linear elliptic inequality 
	$$
	\Delta_m u +u^p |\nabla u|^q \leqslant 0
	$$
	on weighted graphs, where $(m,p,q)\in (1,\infty)\times\mathbb{R}\times\mathbb{R}$. According to the ranges of parameters $(m, p, q)$, we establish the non-existence of nontrivial positive solutions under the corresponding sharp volume growth conditions. Our results can be viewed as a discrete generalization of their counterparts on Riemannian manifolds established by \textit{[Sun, Yuhua; Xiao, Jie; Xu, Fanheng, Math. Ann. 384 (2022), no. 3-4, 1309--1341.]}. However, this generalization is far from trivial, many results exhibit significant differences from the manifold setting, highlighting the distinct behaviors and challenges that arise in the discrete weighted graph framework.
\end{abstract}

\section{Introduction}
    In this paper, we are concerned with the existence and non-existence of positive solutions to the following quasi-linear inequality
    \begin{equation}\label{eq1}
	   \Delta _m u+u^p|\nabla u|^q\leqslant 0,
    \end{equation}
   on weighted graph, where $(m,p,q)\in\left( 1,\infty \right) \times \mathbb{R} \times \mathbb{R}$. 
   
Let us first consider the case $q=0$ and $m=2$, then \eqref{eq1} becomes 
   \begin{equation}\label{Equations (*)1}
   	\Delta u+u^p\le 0.
   \end{equation}
   The existence and non-existence of positive solutions of \eqref{Equations (*)1} on weighted graphs were studied in the pioneering paper \cite{GHS23}. When $p\leq 1$, \eqref{Equations (*)1} has no positive solution. When $p>1$, under the so-called $(p_0)$ condition of graphs and the volume growth of a ball
   $$V_o(n)\leq C n^{\frac{2p}{p-1}}\ln ^\frac{1}{p-1}(n)\quad \mbox{ for $n>>1$ },$$
   it was proved that \eqref{Equations (*)1} has no positive solution. One can refer to definition of $V_o(n)$ in (\ref{def-vol}). The sharpness of the volume growth was also discussed in \cite{GHS23}.
   
   The result in \cite{GHS23} has been  generalized in  \cite{GW25}  from \eqref{Equations (*)1} to the quasi-linear elliptic inequality
   \begin{equation}\label{emqt1001}
   	\Delta_{m}u+u^p\le 0,
   \end{equation}
   where the volume growth condition is given by
   $$V_o(n) \leq C  n^{\frac{m p }{p-m+1}}(\ln n)^{\frac{m-1}{p-m +1}}\quad \mbox{ for $n>>1$ }.$$
   This condition has been also proved to be optimal. Note that \eqref{Equations (*)1} is a particular case of \eqref{emqt1001} with $m=2$.
   
   In the case $m=2$ and  $p,q\in \mathbb R$, \eqref{eq1} becomes 
   \begin{equation}\label{Equations (*)2}
   	\Delta u+u^p|\nabla u|^q\le 0.
   \end{equation}
   Very recently, this inequality has been investigated in \cite{HS23}. Under a variety of volume growth conditions depending on the ranges of $(p,q)$, the authors in \cite{HS23} obtained optimal Liouville type theorems for positive solutions of \eqref{Equations (*)2}. More precisely, they divided $\mathbb{R}^2$ into six ranges of $(p,q)$, 
   %$$\begin{aligned} & G1=\{(p, q) \mid p \geq 0,1-p<q<2\}, G2=\{(p, q) \mid q \geq 2\}, \\ & G3=\{(p, q) \mid p<0,1<q<2\}, G4=\{(p, q) \mid p<0, q=1\} ,\\ & G5=\{(p, q) \mid p+q=1, p \geq 0, q>0, \text { or } p+q=1, q<0\}, \\ & G6=\{(p, q) \mid p<1-q, q<1, \text { or }(p, q)=(1,0)\},\end{aligned}$$
   and on each range, the optimal volume growth condition for the non-existence of nontrivial positive solutions was proved.

    In recent years, qualitative properties of partial differential  equations  on  weighted graphs have been considerably studied such as existence and non-existence, boundedness, and blow-up  of solutions, see e.g  
    \cite{Yau15, Gr18, GLY16b, GLY17, GHS23, HS23, HL21, HLW23, HLY20, LY20, Saloff-Coste95, Saloff-Coste97, GW25,LW17,LW18,LY20,Liu23,Liu24,Wu21,Wu24, MPS23,MPS24,MDQ25, PZ26-1, PZ26-2}.  Concerning the quasi-linear operator, some existence results for $m$-Laplace elliptic equation  were obtained in \cite{HS21}. For the  $m$-Yamabe equation and $m$-{K}azdan-{W}arner equation, we refer the readers to the articles \cite{Ge18,ZL19,ZL18, Ge20}. Some qualitative results for parabolic equations involving $m$-Laplace operator were established in  \cite{AT20,HM15} and also  references given there.
   
   Let us mention that \eqref{eq1} on Euclidean setting or on Riemannian manifolds was studied in \cite{SXX22, MP01, Filip09,FP10, CHZ22,MP01,GS14, GSV20}. To the best of our knowledge, there is no work dealing with \eqref{eq1} on weighted graphs. Motivated by \cite{HS23,SXX22,GW25}, in this paper, under some volume growth assumptions and the $(p_0)$ condition,  we propose to  establish non-existence results of nontrivial positive solutions of \eqref{eq1}. We also show the sharpness of the volume growth conditions by constructing concrete examples on the homogeneous trees.
   
   This paper is organized as follows. In Section \ref{s1}, we recall the graph setting and formulate our main results.  Sections \ref{sec-thm1}, \ref{sec Thm2.4} and \ref{sec-thm2} are devoted to the proof of non-existence results. The existence results are presented in the last section.
   
   \section{Graph setting and Main results}\label{s1}

Let $G=(V,E)$ be an infinite, connected, locally finite graph. Here $V$ denotes the vertex set and $E$ is the edge set. We assume that between any two distinct vertices, 
    there is at most one edge, and no vertex 
    is connected to itself. \

    For vertices $x,y\in V$, we write $x\sim y$ if they are connected by an edge. The weight function $\mu : E \rightarrow [0,\infty)$ assigns a positive weight $\mu_{xy} > 0$ to each edge $x \sim y$. Here $\mu$ can also be considered as a map from $V\times V $ to $ [0,\infty)$, where $\mu_{xy}=\mu_{yx}$, and $\mu_{xy}>0$ if and only if $x\sim y$. The triplet $(V,E,\mu)$ is referred to as a  weighted graph, we
    denote it by $(V,\mu)$ for brevity. \

    For a vertex $x\in V$, define vertex measure of $x$ as
    $$
    \mu(x)=\sum\limits_{y\sim x}{\mu _{xy}}.
    $$
    Let $\iota (V)$ be the space of all real-valued functions on $V$.  For $m>1$, the $m$-Laplacian operator $\Delta_m :\iota(V) \rightarrow \iota(V)$ is given by
    \begin{equation}
	\Delta_m u(x)=\frac{1}{\mu (x)} \sum_{y\sim x}\mu_{xy} |u(y)-u(x)|^{m-2}[u(y)-u(x)] ,\quad\mbox{for $u\in \iota (V)$}.
    \end{equation}
    The gradient form $\varGamma$ is defined by
    $$
    \varGamma ( f,g ) =\sum_{y\sim x}{\frac{\mu _{xy}}{2\mu (x)}(f(y)-f(x)) (g(y)-g(x))},\quad\mbox{for $f,g\in \iota (V)$},
    $$
    and the norm of the gradient is given by
    \begin{equation}
	|\nabla u(x)|=\sqrt{\varGamma (u,u)}=\sqrt{\sum_{y\sim x}{\frac{\mu _{xy}}{2\mu (x)}(u(y)-u(x))^2}},\quad \mbox{for $u\in \iota (V)$}.
    \end{equation} 
    See \cite{Gr18} for further details.
    
    In this paper, we say that condition $(p_0)$ holds on $G$: if there exists a constant $p_0>1$ such that for any $x \sim y$,
    \begin{equation}\label{p_0}
	\frac{\mu _{xy}}{\mu \left( x \right)}\geqslant \frac{1}{p_0}. \tag{$p_0$}
    \end{equation}
 
    \
    For any two vertices $x,y\in V$, if there exists a finite sequence of vertices $\{x_k\}_{k=0}^{n}$ such that $x_k \sim x_{k+1}$ 
    for all $k=0,1,...,n-1$, then we call $\{x_k\}_{k=0}^{n}$ a path connecting $x_0$ and $x_n$, and 
    define the length to be $n$. 
    The distance between $x$ and $y$, denoted by $d(x,y)$, is defined as the minimal length of all such paths connecting $x$ and $y$.
    %We define $d(x,y)$ as the minimal length of the path that connects $x$ and $y$. 
    
    For any fixed vertex $o\in V$ and integer $n\geqslant1$, the ball of radius $n$ centered at $o$ is defined by
    $$
    B(o,n)\coloneqq\{x\in V|d(o,x)\leqslant n\}.
    $$
    We recall two notions of volume from Saloff-Coste's paper \cite{Saloff-Coste95}:
    \begin{align}\label{def-vol}
    V_o(n)=\mu (B(o,n))=\sum_{x\in B(o,n)}{\mu (x)} = \sum_{\substack{x\in B(o,n),y\in V}}\mu_{xy}.
    \end{align}
    and
    $$
    W_o(n)=W(B(o,n)) = \sum_{\substack{x \in B(o,n),y\in V \\ d(o,x)<d(o,y)}} \mu_{xy}.
    $$
    The quantity $V_o(n)$ is the standard definition of the volume of balls. Throughout the paper, we impose conditions on $W_o(n)$ rather than on $V_o(n)$. Note that, for any $n\geqslant 1$, we have $W_o(n)\leqslant V_o(n)$,  since $W_o(n)$ only counts those edges that lead away from the fixed center. Thus,  conditions on $W_o(n)$ are more refined than those on $V_o(n)$.

The following example from \cite{Saloff-Coste97} illustrates the difference between $W_o(n)$ and $V_0(n)$ more clearly.
    \begin{example}
        Let us consider the following weighted graph. For each $n\geqslant2$, let $K_n$ denote the complete graph on $n$ vertices, meaning that
        every pair of distinct any two vertices is connected by an edge. Construct a connected graph $G$ by 
        successively identifying a vertex of $K_n$ with a vertex $K_{n+1}$ as illustrated in Figure \ref{fig1}. 
        We endow $G$ with a simple weight function $\mu$, and define $\mu_{xy}=1$ for all adjacent vertices $x\sim y$, and $\mu_{xy}=0$ otherwise.

           For this graph, we have $W_o(0)=1$
   and $ W_o(n)-W_o(n-1)=n+1$. Thus, it follows that $W_0(n)=\frac{(n+1)(n+2)}{2}$. On the other hand, $V_o(n)-V_o(n-1)=(n-2)(n-1)+2n-1=n^2-n+1$. 
   Therefore, $V_o(n)\asymp \frac{1}{3}n^3 \gg W_o(n)$, which indicates $V_o(n)$ grows significantly faster than $W_o(n)$.
    \end{example}

    \begin{figure}[htp]
	\begin{center}
		\includegraphics[scale=0.17]{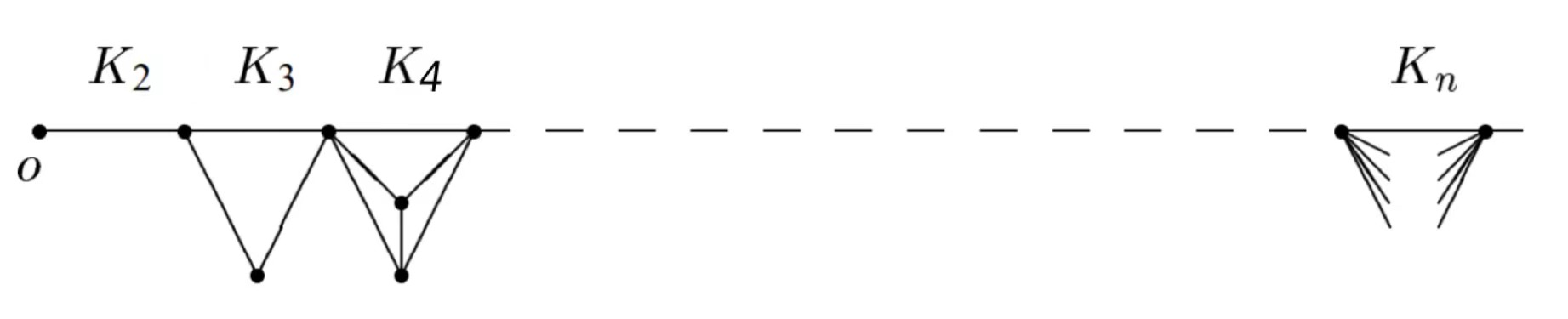}
		\caption{}\label{fig1}
	\end{center}
    \end{figure}

    \begin{remark}
        In what follows we use the following notations: $f\lesssim g$ means that $f\leqslant c g$ for a constant $c>0$. $f\asymp g $ means both $f\lesssim g$ and $g\lesssim f$ hold.
    \end{remark}

    \
    A weighted graph $(V,\mu)$ is said to be $m$-parabolic, if any non-negative function satisfying $\Delta_m u\leq0$ 
    on this graph is constant.  In \cite{Saloff-Coste95}, Saloff-Coste proved that if there exists an infinite sequence
    $r_0<r_1<\cdots<r_i<\cdots$, such that
    $$\sum_{i=0}^{\infty}\left(\frac{(r_{i+1}-r_i)^p}{W_o(r_{i+1}-W_o(r_i))}\right)^{\frac{1}{p-1}}=\infty,$$
    then $(V,\mu)$ is $m$-parabolic.  When $m=2$, the above volume assumption is well-known as Nash-Willimas' test, see \cite{Woess00}.

    \
    Analogous results have also been established in the setting of Riemannian manifolds.
    Let $M$ be a complete, noncompact, connected Riemannian manifold with Riemannian measure $\mu$, and fix a point $o\in M$. It was proved that by 
    Karp \cite{Karp82}, Grigor'yan \cite{Gri85}, Varopoulos \cite{Varo83} independently
    for $m=2$, and by Holopainenen \cite{Hol99} for general $m>1$, if
    $$
    \int^{\infty} \left(\frac{r}{\mu(B(o,r))}\right)^{\frac{1}{m-1}} dr = \infty,
    $$
    then $M$ is $m$-parabolic, i.e., every non-negative function satisfying $\Delta_mu\leq0$ on $M$ is constant.

    \
    To facilitate a systematic analysis, we divide the parameter 
    space  $\mathbb{R}^2$ into six regions $G_1$ through $G_6$ as illustrated in Figure \ref{fig2}.
        \begin{figure}[htp]
    	\begin{center}
    		\includegraphics[scale=0.8]{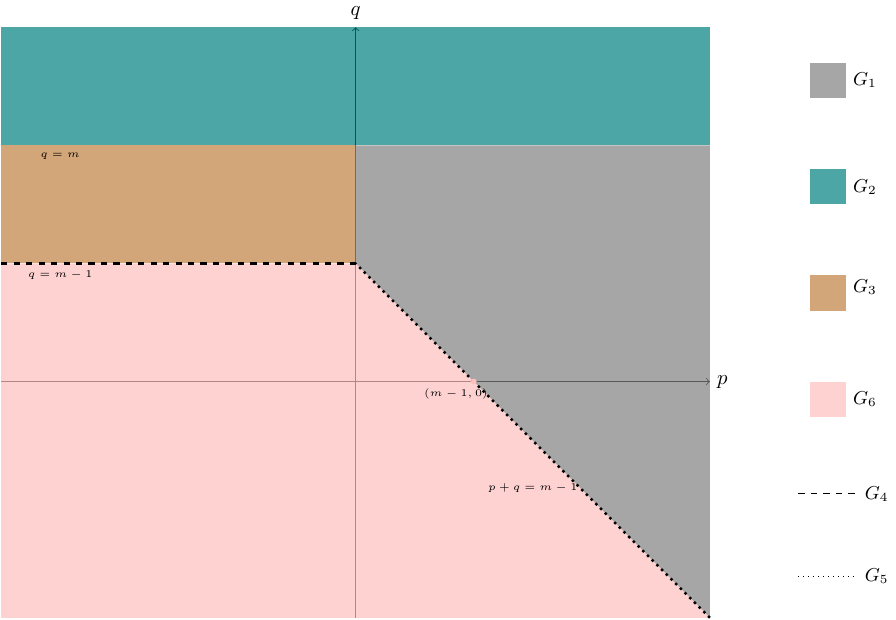}
    		\caption{}\label{fig2}
    	\end{center}
    \end{figure}
    \begin{align*}
	&G_1 = \{(p,q)|p\geqslant0,m-1-p<q<m\}, \quad G_2 = \{(p,q)|q\geqslant m\},\\
	&G_3 = \{(p,q)|p<0,m-1<q<m\}, \quad G_4 = \{(p,q)|p<0,q=m-1\},\\
	& G_5=G_{5.1}\cup G_{5.2}:=\{(p,q)|p+q=m-1,p\geqslant 0,q>0\}\cup \{(p,q)|p+q=m-1,q<0\}, , \\
	&G_6=\{(p,q)|p<m-1-q,q<m-1,or\,(p,q)=(m-1,0)\}.
    \end{align*}

    Next, we present the main theorems of this paper.
        \begin{theorem}\label{Thm1.1}\rm
        Let $G=(V,E,\mu)$ be an infinite, connected, locally finite graph satisfying condition (\ref{p_0}), and fix  $o\in V$.
        Then $\eqref{eq1}$ admits no nontrivial positive solution if any one of the following holds:

        $\mathrm{(I)}$ $(p,q)\in G_1$, and
	  \begin{equation}\label{eq1.4}
		 W_o(n) \lesssim n^{\frac{mp+q}{p+q-m+1}} (\ln n)^{\frac{m-1}{p+q-m+1}}\quad\mbox{for $n\gg1$}.
	  \end{equation}

       $\mathrm{(II)}$  $(p,q)\in G_2$, and
       \begin{equation}\label{eq1.5}
    	 W_o(n)  \lesssim n^m (\ln n)^{m-1}\quad \mbox{for $n\gg1$}.
       \end{equation}

       $\mathrm{(III)}$  $(p,q)\in G_3$, and
       \begin{equation}\label{eq1.6}
    	 W_o(n)  \lesssim n^{\frac{q}{q-m+1}}(\ln n)^{\frac{m-1}{q-m+1}}\quad \mbox{for $n\gg1$}.
       \end{equation}

      $\mathrm{(IV)}$ $(p,q)\in G_4$, and for any  $\alpha>0$,
      \begin{equation}\label{eq1.7}
    	W_o(n)  \lesssim n^{\alpha}\quad \mbox{for $n\gg1$}.
      \end{equation}

      $\mathrm{(V)}$  $(p,q)\in G_{5.1}$ for all $m>1$ or $(p,q)\in G_{5.2}$ with $1<m<3$, and there exists a $k_0>0$ such that   for all $\kappa \in (0,k_0)$,
      \begin{equation}\label{eq1.8}
      	W_o(n)  \lesssim e^{\kappa n},\quad n\gg1.
      \end{equation}
    \end{theorem}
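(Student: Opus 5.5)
We will use the nonlinear capacity (test function) method, adapted to the discrete setting as in \cite{GHS23,HS23,GW25}, with the logarithmic refinement coming from a carefully chosen family of cut-off functions. Let $u>0$ be a nonconstant solution of \eqref{eq1}. For a finitely supported nonnegative test function $\varphi$ and a parameter $s$, we multiply \eqref{eq1} by $u^{-s}\varphi^{k}\mu(x)$, sum over $V$, and apply discrete summation by parts:
\[
\sum_{x}\Delta_m u(x)\,\psi(x)\mu(x)=-\frac12\sum_{x,y}\mu_{xy}|u(y)-u(x)|^{m-2}(u(y)-u(x))(\psi(y)-\psi(x)).
\]
The key discrete ingredient is an elementary inequality for the difference $u^{-s}(y)\varphi^k(y)-u^{-s}(x)\varphi^k(x)$. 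It splits into a ``good'' term $\gtrsim \mu_{xy}|u(y)-u(x)|^m\,\min(u(x),u(y))^{-s-1}\varphi^k$, which is positive for $s>0$ by monotonicity of $t\mapsto t^{-s}$, and a cross term involving $|\varphi(y)-\varphi(x)|$. Condition $(p_0)$ is what makes this splitting work: it lets us compare $u(x)$ with $u(y)$ for $x\sim y$, since $\Delta_m u\le 0$ forces $u(y)^{m-1}\lesssim p_0\,u(x)^{m-1}$ roughly. It also lets us compare $|\nabla u(x)|^q$ with $|u(y)-u(x)|^q$ for each neighbour, with constants that depend only on $p_0$, so that both signs of $q$ can be handled. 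Young's inequality then yields a priori estimates of the form
\[
\sum_x u^{p-s}|\nabla u|^q\varphi^k\mu+\sum_{x\sim y}\mu_{xy}\frac{|\nabla_{xy}u|^m}{u^{s+1}}\varphi^k\lesssim \sum_{x\sim y}\mu_{xy}\,u^{m-1-s}|\nabla_{xy}\varphi|^m\varphi^{k-m}.
\]
A second Hölder/Young step, balancing against the source term $u^{p}|\nabla u|^q$, removes $u$ from the right-hand side and gives
\[
\sum u^{p-s}|\nabla u|^q\varphi^k\mu\lesssim\sum_{x\sim y}\mu_{xy}|\nabla_{xy}\varphi|^{\frac{mp+q-s(\dots)}{p+q-m+1}}.
\]
The exponents are dictated by the region. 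In $G_1$ we take $s\to0^+$, which produces the exponent $\frac{mp+q}{p+q-m+1}$. In $G_2$ the gradient term dominates and we use $q\ge m$ to reduce to an $m$-parabolicity-type estimate with exponent $m$. In $G_3$ ($p<0$) the $u^p$ factor is absorbed using $u^{-s}$, leaving the exponent $\frac{q}{q-m+1}$.

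To reach the borderline $\ln$ factors we use the standard trick: choose $\varphi$ radial and piecewise linear in $\ln d(o,x)$, equal to $1$ on $B(o,n)$ and $0$ outside $B(o,n^2)$, and let $s=s_n\to0$ like $1/\ln n$. Writing the edge sums shell by shell with $W_o$ (only edges with $d(o,x)<d(o,y)$ contribute to $\nabla\varphi$, which is why $W_o$ rather than $V_o$ appears), the volume hypotheses \eqref{eq1.4}--\eqref{eq1.6} make the right-hand side bounded uniformly in $n$. The left-hand side is then finite, and a refined argument restricting to annuli via Hölder shows it tends to $0$, which forces $u^{p}|\nabla u|^q\equiv0$. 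This gives $|\nabla u|\equiv0$ when $q>0$, so $u$ is constant, a contradiction. For $q\le0$ we get a contradiction directly.

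For case (IV) ($q=m-1$, $p<0$) and case (V) ($p+q=m-1$), the homogeneity is critical and polynomial cut-offs do not suffice. In (V) we instead take $\varphi$ exponential-type, $\varphi=e^{-\varepsilon d(o,x)}$ truncated, or equivalently use $\ln u$ substitution. Since $p+q=m-1$, the inequality becomes scale invariant, and one derives $\sum\varphi^k\mu\lesssim\sum\mu_{xy}|\nabla_{xy}\varphi|^m\varphi^{k-m}$ with a small constant. Choosing $\kappa<k_0$ so that the gain from $\varepsilon$ beats the growth $e^{\kappa n}$, the series converges and the estimate contradicts positivity. The restriction $1<m<3$ in $G_{5.2}$ should enter through an elementary inequality for $q<0$ that needs $m-1<2$. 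In (IV) a similar argument with $\varphi=d(o,x)^{-\alpha'}$-type weights works for every polynomial growth.

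I expect the main obstacle to be the discrete chain-rule step, i.e.\ obtaining a clean lower bound for $\bigl(|\nabla_{xy}u|^{m-2}\nabla_{xy}u\bigr)\nabla_{xy}(u^{-s}\varphi^k)$ with constants uniform as $s\to0$. The Leibniz rule fails on graphs, and mixing $|\nabla u(x)|^q$ (a vertex quantity) with edge differences requires $(p_0)$ and careful case splitting on the relative sizes of $u(x)$ and $u(y)$. This step is precisely where results can differ from the manifold case, for example the restriction $m<3$.
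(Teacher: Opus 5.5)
Your plan for (I) is the paper's argument: the Harnack bound from $(p_0)$, the test function $\varphi^s u^{-t}$, a splitting that puts the exponent $\frac{mp+q+t(q-m)}{p+q-m+1}$ on $|\nabla_{xy}\varphi|$, cut-offs averaged over the dyadic scales between $2^{i-1}$ and $2^{2i-1}$ with $t=1/i$, and finally the annulus form of Lemma \ref{lem2.2}, which upgrades finiteness to vanishing. The (I)--(III) part still needs three repairs.

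\textbf{The splitting step.} Your displayed intermediate bound keeps only $u^{m-1-s}$ on the right. For $q>0$ it cannot be H\"older-balanced against $u^p|\nabla u|^q$, because $\nabla u$ may vanish where $u$ does not. For $q<0$ it can be balanced, but the exponent it gives is not sharp. The cross term $|\nabla u|^{m-1}u^{-t}|\nabla_{xy}\varphi|$ has to be split in one step into three pieces: the good term, the source term, and a pure power of $|\nabla_{xy}\varphi|$. This is what the exponents $a,b,\gamma,\rho$ in Lemma \ref{lem2.2} accomplish.

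\textbf{The choice of $t$.} The limit $t\to0$ is correct only in $G_1$. In $G_2$ you need $t\to m-1$, which is what produces the exponent $m$. In $G_3$ you need $t\to\frac{(m-1)|p|}{q-m+1}$. In both $G_2$ and $G_3$ the limit is approached from the side fixed by the sign of $p+q-m+1$.

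\textbf{The line $p+q=m-1$.} This line meets $G_2$ (for example $(p,q)=(-1,m)$) and $G_3$. There your exponent is undefined and the lemma does not apply. The paper handles this by passing to $v=u/k$ on $\{u<k\}$. On that set $v^{p+\epsilon}\le v^p$ and every boundary edge points upward. So the off-critical estimate applies there; note that the volume exponents in (II) and (III) do not depend on $p$. One then exhausts $V$ by the sets $\{u<k_i\}\supset B(o,i)$.

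\textbf{Case (IV).} Your premise that this case is critical is false. Here $p+q-m+1=p<0$, so Lemma \ref{lem2.2} applies with $t>m-1$. The exponent on $|\nabla_{xy}\varphi|$ is then
\[
\beta=\frac{mp+q-t}{p}=m+\frac{t-(m-1)}{|p|}.
\]
Taking $t=l(m-1)+1/i$ with $l$ large makes $\beta$ exceed any prescribed $\alpha$. Ordinary cut-offs then give the bound $\lesssim n^{-\beta}W_o(2n)\lesssim n^{\alpha-\beta}\to0$. Your suggested ``similar argument with $d(o,x)^{-\alpha'}$ weights'' is not an argument. The analogy with (V) rests on scale invariance, which $G_4$ lacks.

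\textbf{Case (V).} Your estimate $\sum\mu\varphi^k\lesssim\sum\mu_{xy}|\nabla_{xy}\varphi|^m\varphi^{k-m}$ comes from three steps: testing with $u^{1-m}\varphi^k$, applying Young's inequality, and using a lower bound $X^q+X^m\gtrsim1$ for $X=|\nabla u|/u$.

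In $G_{5.2}$ that lower bound holds, since $q<0$ and $X\le C$ by Harnack. So there your exponential-weight route is a valid and simpler alternative to the paper's argument, which perturbs $p\mapsto p+\epsilon$ and ties the exponent $z\sim1/\epsilon$ to $n$.

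In $G_{5.1}$ the lower bound fails. There $q>0$ and $u$ may be locally constant, so the left side vanishes wherever $\nabla u=0$ and cannot control $\sum\mu\varphi^k$. The paper proceeds as follows:
\begin{itemize}
\item From $p+q=m-1$ and Harnack it gets $|\nabla u|\le Cu$, hence $u^p|\nabla u|^q\ge c|\nabla u|^{m-1}$.
\item On $\Omega_k'$ the gradient of $v=u/k$ is bounded, so $|\nabla v|^{m-1}\gtrsim|\nabla v|^\lambda$ for $\lambda>m-1$.
\item Testing with $h_n^z$, $z=\frac{\lambda}{\lambda-m+1}$, gives the bound $(Cz/n)^zW_o(2n)$.
\item The coupling $z=\theta n$ beats $e^{2\kappa n}$ when $\kappa$ is small.
\end{itemize}

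Your exponential weights can also be rescued without Young's inequality. Take $t=m-1$ and set $Y_{xy}=|\nabla_{xy}u|/u(x)$. Since $q\le m-1$ and $X$ is bounded, one has pointwise
\[
\sum_y\mu_{xy}Y_{xy}^{m-1}\lesssim\sum_y\mu_{xy}Y_{xy}^m+\mu(x)X(x)^q.
\]
With $\varphi=(e^{-\varepsilon d(o,\cdot)}-e^{-\varepsilon R})_+$, the cross term is then absorbed directly once $\kappa<\varepsilon$ and $e^{\varepsilon}-1$ is small. Finiteness of the sums uses three facts:
\begin{itemize}
\item $X\lesssim1$, from Harnack;
\item $X^q\le1$ when $q<0$, from the inequality itself;
\item $V_o(n)\le\mu(o)+p_0W_o(n-1)$, a consequence of $(p_0)$.
\end{itemize}

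Finally, the restriction $m<3$ plays no role in proving nonexistence. The paper's argument for $G_{5.2}$ never uses it, and Theorem \ref{Thm1.4} rules out solutions for $m\ge3$ with no volume assumption. The restriction is needed only for the sharpness example.
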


In the case where  $(p,q)\in G_{5.2}$,   with $m\geq 3$, we shall show that \eqref{eq1} has no nontrivial positive solution without any volume growth assumption.

\begin{theorem}\label{Thm1.4}\rm
	 Let $G=(V,E,\mu)$ be an infinite, connected, locally finite graph satisfying condition $(p_0)$, and  $(p,q)\in  G_{5.2}$ with $ m\geq3$, then the inequality $\eqref{eq1}$ admits no nontrivial positive solution.
\end{theorem}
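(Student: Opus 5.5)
The plan is to prove Theorem \ref{Thm1.4} by a purely local (pointwise) argument, with no test functions and no volume growth, exploiting the scaling invariance of the case $p+q=m-1$. Let $u>0$ be a nontrivial solution. Since $q<0$, the term $|\nabla u|^q$ must be finite, so $|\nabla u(x)|>0$ at every vertex. Then $\Delta_m u(x)<0$ everywhere, and every vertex has a neighbour with strictly smaller value.

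\textbf{Normalisation.} At a fixed vertex $x$ write $w_y=\mu_{xy}/\mu(x)$, which satisfies $w_y\ge 1/p_0$ by \eqref{p_0}. For upper neighbours set $a_y=(u(y)-u(x))/u(x)>0$, and for lower neighbours set $b_y=(u(x)-u(y))/u(x)\in(0,1)$; the bound $b_y<1$ holds because $u>0$. Define
\[
A=\sum_{\rm up} w_y a_y^{m-1},\qquad B=\sum_{\rm low} w_y b_y^{m-1},\qquad g=\frac{|\nabla u(x)|}{u(x)} .
\]
Then $2g^2=\sum_{\rm up}w_y a_y^2+\sum_{\rm low}w_y b_y^2$. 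Dividing \eqref{eq1} by $u(x)^{m-1}$ and using $p+q=m-1$ gives the scale-free inequality
\[
B-A\ \ge\ g^{q}.
\]

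\textbf{Consequences of $m\ge 3$.}
\begin{itemize}
\item[(i)] Since $\sum_{\rm low} w_y\le 1$ and $b_y<1$, we get $B<1$. Hence $A<1$, and by \eqref{p_0} each $a_y<p_0^{1/(m-1)}$, so $g\le C(p_0,m)$ and $g^q\ge c_0>0$.
\item[(ii)] Because $m-1\ge 2$ and $b_y<1$, we have $b_y^{m-1}\le b_y^2$. This gives $B\le \sum_{\rm low}w_yb_y^2\le 2g^2$. The pair of bounds $B\le 2g^2$ and $B\ge g^q=g^{-|q|}$ forces $g$ away from $0$.
\item[(iii)] The lower bound on $B$ also forces a single lower neighbour with $b_y\ge c_1$, a definite proportional drop.
\end{itemize}
This is exactly the place where $m\ge3$ is used. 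For $m<3$ the inequality $b^{m-1}\le b^2$ fails for small $b$, and the theorem then genuinely requires the volume condition \eqref{eq1.8}. The argument must therefore go beyond the estimate $\Delta_m u\le -c\,u^{m-1}$, which by itself is only the $q=0$, $p=m-1$ problem and needs exponential volume control.

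\textbf{Descent path.} Next I would build a path $x_0,x_1,\dots$ by always moving to the lower neighbour realising the largest $b$. By (iii), $u(x_{k+1})\le(1-c_1)u(x_k)$. At $x_{k+1}$, the vertex $x_k$ is an upper neighbour with normalised jump $a\ge c_1/(1-c_1)$ relative to $u(x_{k+1})$. Since $m-1\ge2$, this large upward jump contributes at least $p_0^{-1}a^{m-1}$ to $A(x_{k+1})$, and it also contributes to $g(x_{k+1})^2$.

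\textbf{Main obstacle.} The step I expect to be hardest is closing the contradiction at $x_{k+1}$. There one must show that the constraints $B<1$, $B\le 2g_{\rm low}^2$, $A\ge p_0^{-1}a^{m-1}$ with $a$ bounded below, and $B-A\ge g^{q}$ cannot hold simultaneously. The first thing I would try is a one-variable optimisation in the normalised quantities $(a,b)$ at $x_{k+1}$. One minimises $B-A-g^q$ over the admissible region determined by \eqref{p_0}, using $t^{m-1}\le t^2$ for $t\le1$ and $t^{m-1}\ge t^2$ for $t\ge1$. If this single-step analysis is not sharp enough, I would iterate along the path. The idea is to show that the ratios $u(x_{k+1})/u(x_k)$ are forced to shrink to $0$. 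That would make $A(x_{k+1})$ eventually exceed $1>B(x_{k+1})$, contradicting $B-A\ge g^q>0$.
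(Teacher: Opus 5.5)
Your normalisation is correct and is the paper's: at a single vertex, with $p+q=m-1$, the inequality reads $B-A\ge g^{q}$. However, the proposal has a genuine gap. It never reaches a contradiction. The step you label the main obstacle is the whole content of the theorem, and what you offer there is only a plan: an unspecified optimisation, or an iteration along a descent path with no argument that $u(x_{k+1})/u(x_k)\to0$.

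You get stuck because your bound (i), $g\le C(p_0,m)$, is too weak. As you note yourself, it only gives $\Delta_m u\le -c_0u^{m-1}$. Also, the use of $m\ge3$ you single out, $b^{m-1}\le b^2$ on lower neighbours, produces a \emph{lower} bound on $g$, which points the wrong way.

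The missing idea is the sharp upper bound $g<1$, i.e.\ $|\nabla u(x)|<u(x)$, at every vertex. It uses $m-1\ge2$ on the \emph{upper} neighbours. Let $W_u,W_l$ be the total weights $w_y$ of the upper and lower neighbours, so $W_u+W_l\le1$.
\begin{itemize}
\item Since $b_y<1$, you have $\sum_{\mathrm{low}}w_yb_y^2\le W_l$.
\item Since $a^2\le a^{m-1}$ for $a\ge1$ and $a^2<1$ for $a<1$, you have $\sum_{\mathrm{up}}w_ya_y^2\le W_u+A$.
\item Because $g^q>0$, you have $A<B\le W_l\le1$.
\end{itemize}
Hence
\[
2g^2\le W_l+W_u+A\le 1+A<2,
\]
so $g<1$. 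As $q<0$, this gives $g^{q}>1$. But then $B\ge A+g^q>1$, contradicting $B\le W_l\le 1$.

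This closes the argument at one vertex. No descent path is needed, and condition $(p_0)$ is never used.

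This is essentially the paper's proof. The paper phrases the same estimate through the $(m-1)$-power means $A_\pm$ of the downward and upward jumps. It uses Jensen's inequality (the quadratic mean is at most the $(m-1)$-mean when $m\ge3$) and $R^2\le R^{m-1}$ for $R\ge1$. It then checks that each of its three factors, $\frac{K-R^{m-1}}{1+K}$, $\bigl(\frac{K+R^2}{2(1+K)}\bigr)^{-q/2}$ and $(A_+/u)^p$, is $<1$.
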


\begin{theorem}\label{Thm1.2}\rm
    Let $G=(V,E,\mu)$ be an infinite, connected, locally finite graph satisfying condition \eqref{p_0}, and  $(p,q)\in G_6$, then the inequality $\eqref{eq1}$ admits no nontrivial positive solution.
\end{theorem}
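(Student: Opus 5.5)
We have to prove Theorem \ref{Thm1.2}: for $(p,q)\in G_6$ the inequality \eqref{eq1} has no nontrivial positive solution, with no volume assumption. The plan is a purely local argument at a maximum-type point, using $(p_0)$ only as a lower bound on neighbour weights.

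\textbf{Reduction.} Let $u>0$ satisfy \eqref{eq1}. Then $\Delta_m u\le -u^p|\nabla u|^q\le 0$, so $u$ is $m$-superharmonic. If $u$ is constant, then $|\nabla u|\equiv 0$. For $q<0$ the term $|\nabla u|^q$ is undefined or infinite, so we either exclude constants by convention or read the inequality as violated; for $q\ge0$ a positive constant does solve \eqref{eq1}, so in that case ``nontrivial'' must exclude constants. Hence we assume $u$ is nonconstant.

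\textbf{Step 1: the range $q<m-1$, $p+q<m-1$.} Put $w(x)=\max_{y\sim x}|u(y)-u(x)|$. By $(p_0)$, $|\nabla u(x)|^2\ge \frac{1}{2p_0}w(x)^2$, and trivially $|\nabla u(x)|\le w(x)/\sqrt2$. Therefore $|\nabla u(x)|^q\gtrsim w(x)^q$ for every real $q$ (using the lower bound when $q\ge0$ and the upper bound when $q<0$). On the other side,
\[-\Delta_m u(x)\le \frac{1}{\mu(x)}\sum_{y\sim x,\,u(y)<u(x)}\mu_{xy}(u(x)-u(y))^{m-1}\le w(x)^{m-1}.\]
Combining the two bounds gives, at every point with $w(x)>0$,
\[u(x)^p\,w(x)^{q-(m-1)}\lesssim 1 .\]
The exponent $q-m+1$ is negative, so this says $w(x)\gtrsim u(x)^{p/(m-1-q)}$ wherever $w(x)>0$.

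Next I build a strictly decreasing path. Superharmonicity at $x$ with $w(x)>0$ forces a neighbour $y$ with $u(y)<u(x)$. The point is that $u(x)-u(y)$ is bounded below, not just $|u(\cdot)-u(x)|$. By $(p_0)$, the downward part of $-\Delta_m u(x)$ is at least the upward part plus $u^p|\nabla u|^q$. Since the upward part is at least $p_0^{-1}(\text{max upward jump})^{m-1}$ and the downward part is at most $(\text{max downward jump})^{m-1}$, the maximal downward jump $d(x)$ satisfies $d(x)\gtrsim w(x)\gtrsim u(x)^{\beta}$ with $\beta=p/(m-1-q)$. Iterating gives a path along which
\[u_{k+1}\le u_k-c\,u_k^{\beta}.\]
Also $w>0$ persists along the path: if $w(x_k)=0$ then $u$ is constant near $x_k$, which contradicts the previous strict drop. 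We have $\beta<1$ exactly when $p+q<m-1$. Then $u_k$ reaches zero or becomes negative in finitely many steps, contradicting $u>0$. For $\beta\le 0$ this is immediate, since $u_k^\beta\ge u_0^\beta$ once $u_k\le u_0$. For $0<\beta<1$ it is the discrete analogue of finite-time extinction for $\dot v=-cv^\beta$.

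\textbf{Step 2: the point $(p,q)=(m-1,0)$.} Here \eqref{eq1} reads $-\Delta_m u\ge u^{m-1}$ and the recursion gives only $\beta=1$, i.e.\ geometric decay, which is not a contradiction. I expect this to be the main obstacle. I would instead use a minimum-type argument. Write $u(y)=u(x)t_y$; then
\[\frac1{\mu(x)}\sum_{y\sim x}\mu_{xy}\,\Phi(1-t_y)\ge 1,\qquad \Phi(s)=|s|^{m-2}s .\]
Since every $t_y>0$, each downward term $(1-t_y)^{m-1}$ is strictly less than $1$. So the inequality cannot hold unless the total weight of upward terms is small. Quantitatively, some neighbour must satisfy $t_y\le 1-\delta$ with $\delta$ depending on $p_0$. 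This again yields geometric decay $u_{k+1}\le(1-\delta)u_k$, so I have to squeeze out more.

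To get a contradiction, I would use that $\sum_y \frac{\mu_{xy}}{\mu(x)}=1$. Then $\frac{1}{\mu(x)}\sum_y\mu_{xy}\Phi(1-t_y)\le \frac{1}{\mu(x)}\sum_y \mu_{xy}(1-t_y)_+^{m-1}<1$ whenever some weight sits on a neighbour with $t_y\ge 1$, or more simply whenever every $t_y>0$: the left side is a convex combination of numbers strictly less than $1$. This gives $-\Delta_m u(x)<u(x)^{m-1}$ pointwise, a contradiction at any single vertex. So the case $(m-1,0)$ is in fact immediate and no path argument is needed.

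Remaining checks: that the convex-combination bound is genuinely strict, which holds because the graph is locally finite and $u>0$; and making the lower bound on the downward jump rigorous in Step 1, including the sign conventions for $q<0$.
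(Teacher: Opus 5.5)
Your proof is correct and is essentially the paper's argument. Along the steepest-descent path $u(x_{k+1})=\min_{y\sim x_k}u(y)$, condition $(p_0)$ makes $|\nabla u(x_k)|$ comparable to the drop $d_k=u(x_k)-u(x_{k+1})$, and combining this with $-\Delta_m u(x_k)\le d_k^{m-1}$ gives $d_k^{m-1-q}\gtrsim u(x_k)^{p}$, which is the paper's key inequality (cf.\ \eqref{emqt95}). Your recursion $u_{k+1}\le u_k-c\,u_k^{p/(m-1-q)}$ repackages the paper's limit arguments so that $q<0$, $q=0$ and $0<q<m-1$ are handled at once (the paper splits into two cases and cites \cite{GW25} for $q=0$), and your one-vertex bound $-\Delta_m u(x)<u(x)^{m-1}$ settles $(m-1,0)$ directly. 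The only slip is the harmless side remark that positive constants solve \eqref{eq1} when $q=0$: under the paper's reading $|\nabla u|^0\equiv 1$ they do not.
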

Recall that when $q=0$ and $p\leq m-1$, \eqref{eq1} has no positive solution, see \cite{GW25}. Henceforth, in Theorem \ref{Thm1.2}, we need only to consider $q\not=0$.

\begin{remark}\rm
Let $G_i^\prime=G_i$, for $i=1,2,3,4$, $G_5^{\prime}=G_5\setminus (m-1,0),$ $G_6^{\prime}=G_6\cup (m-1,0)$. Study the same equation (\ref{eq1}) on  geodesically complete,
connected, non-compact Riemannian manifolds, when $(p,q)\in G_i^{\prime}$ for $i=1,2,3,4$, and $(p, q)\in G_{5.1}$ and $(p,q)\in G_{5.2}$ with $1<m<3$, 
the volume growth on manifolds behaves quite similarly as on weighted graphs.
However, for $(p, q)\in G_{5.2}$ with $m\geq 3$,  and $(p,q)\in G_6^\prime$, 
the Liouville results on graph differ significantly  from the manifold setting, since
 the manifold case admits a sharp exponential volume growth condition for such results,
and no analogous volume condition exists on weighted graphs to guarantee non-existence, see \cite{SXX22}.
\end{remark}

Next, we demonstrate  the sharpness of the first five volume conditions in Theorem $\ref{Thm1.1}$ by constructing the counter-examples on
 homogeneous trees.
Recall that a connected graph $(V,E)$ is called a tree if any two distinct vertices are connected by exactly one path. A homogeneous tree $T_N$ is a tree where
every vertex has degree $N$. Fix an arbitrary vertex $o \in V$ as the root.  Our existence results read as follows.

\begin{theorem}\label{Thm1.3}\rm
    Let $(V,E)=T_N$.

    $\mathrm{(I)}$ For $(p,q)\in G_1$ and any  $\epsilon >0$, there exists a weight $\mu$ on $T_N$ satisfying
    \begin{equation}\label{eq1.9}
		 W_o(n) \asymp n^{\frac{mp+q}{p+q-m+1}} (\ln n)^{\frac{m-1}{p+q-m+1}+\epsilon}\quad\text{ for } n\geqslant2,
	\end{equation}
    such that $\eqref{eq1}$ admits a nontrivial positive solution.

    $\mathrm{(II)}$ For $(p,q)\in G_2$ and any  $\epsilon >0$, there exists a weight $\mu$ on $T_N$ satisfying
    \begin{equation}\label{eq1.10}
    	 W_o(n)  \asymp n^m (\ln n)^{m-1+\epsilon}\quad\text{ for } n\geqslant2,
    \end{equation}
    such that $\eqref{eq1}$ admits a nontrivial positive solution.

    $\mathrm{(III)}$ For $(p,q)\in G_3$ and any  $\epsilon >0$, there exists a weight $\mu$ on $T_N$ satisfying
    \begin{equation}\label{eq1.11}
    	 W_o(n)  \asymp n^{\frac{q}{q-m+1}}(\ln n)^{\frac{m-1}{q-m+1}+\epsilon}\quad\text{ for } n\geqslant2,
    \end{equation}
    such that $\eqref{eq1}$ admits a nontrivial positive solution.

    $\mathrm{(IV)}$ For $(p,q)\in G_4$ and any $\lambda>0$, there exists a weight $\mu$ on $T_N$ satisfying
    \begin{equation}\label{eq1.12}
        W_o(n) \asymp e^{\lambda n}\quad\text{ for } n\geqslant2,
    \end{equation}
    such that $\eqref{eq1}$ admits a nontrivial positive solution.

    $\mathrm{(V)}$  For $(p,q)\in G_{5.1}$ or  $(p,q)\in G_{5.2}$ with $1<m<3$, there exists a weight $\mu$ on $T_N$ and  $\lambda$ satisfying
    \begin{equation}\label{eq1.13}
        W_o(n) \asymp e^{\lambda n}\quad\text{ for } n\geqslant2,
    \end{equation}
    such that $\eqref{eq1}$ admits a nontrivial positive solution.
\end{theorem}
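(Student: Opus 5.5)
We construct radial examples on $T_N$. Choose a weight depending only on the level, $\mu_{xy}=w(n)$ when $d(o,x)=n$ and $d(o,y)=n+1$. Then each vertex at level $n\ge1$ has one parent and $N-1$ children. The number of edges between levels $n$ and $n+1$ is $N(N-1)^n$, so
\[
W_o(n)=\sum_{k=0}^{n} N(N-1)^k w(k).
\]
We set $w(k)=(N-1)^{-k}\,g(k)$ for a positive profile $g$, which gives $W_o(n)\asymp\sum_{k\le n} g(k)$. We look for radial solutions $u(x)=f(d(o,x))$ with $f$ positive and strictly decreasing; a decreasing profile is needed in every case where the solution must be nonconstant. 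At a vertex $x$ of level $n\ge1$ we have $\mu(x)=w(n-1)+(N-1)w(n)$. Write $a_n=f(n)-f(n+1)>0$ and
\[
\theta_n=\frac{w(n-1)}{\mu(x)}=\frac{g(n-1)}{g(n-1)+g(n)} .
\]
Then
\[
\Delta_m u(x)=\theta_n a_{n-1}^{m-1}-(1-\theta_n)a_n^{m-1},
\qquad
|\nabla u(x)|^2=\tfrac12\bigl(\theta_n a_{n-1}^2+(1-\theta_n)a_n^2\bigr).
\]
So \eqref{eq1} at level $n$ reduces to the scalar inequality
\[
(1-\theta_n)a_n^{m-1}-\theta_n a_{n-1}^{m-1}\ \ge\ f(n)^p\,|\nabla u|^q .
\]
At the root the inequality becomes $-a_0^{m-1}+f(0)^p|\nabla u(o)|^q\le0$. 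This is satisfied by taking $a_0$ large relative to $f(0)$, or by rescaling.

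Next we choose the profiles. If $\theta_n\approx1/2$ and $a_{n-1}\approx a_n$, the left side is of the size of a discrete derivative,
\[
\approx \frac{1}{g}\,\Delta_{\mathrm{disc}}\bigl(g\,a^{m-1}\bigr).
\]
We therefore mimic the manifold construction of \cite{SXX22}, in which the ``flux'' $g(n)a_n^{m-1}$ is increasing, with increment dominating $g(n)f^p|\nabla u|^q\asymp g(n)f(n)^pa_n^q$. The choices are as follows.

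In case (I), take $g(n)=n^{\beta-1}(\ln n)^{\gamma}$, with $\beta=\frac{mp+q}{p+q-m+1}$ and $\gamma=\frac{m-1}{p+q-m+1}+\epsilon$. Take $f(n)=n^{-\delta}(\ln n)^{-\sigma}$, where $\delta=\frac{m-q}{p+q-m+1}$ and $\sigma$ is chosen slightly less than $1/(p+q-m+1)$, with the gap exploiting $\epsilon$. Then $a_n\approx \delta n^{-\delta-1}(\ln n)^{-\sigma}$. One checks that the flux increment is $\asymp n^{\beta-\delta(m-1)-m}(\ln n)^{\gamma-\sigma(m-1)-1}$ and the right side is $\asymp n^{\beta-1-\delta p-(\delta+1)q}(\ln n)^{\gamma-\sigma(p+q)}$. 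The powers of $n$ match by the choice of $\delta$. The log powers favour the left exactly when $\sigma(p+q-m+1)<1$, while positivity of the flux increment needs $\gamma-\sigma(m-1)>0$; the extra $\epsilon$ makes both possible.

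Case (III) is the same computation with $f$ bounded below by a positive constant. There $p<0$ makes $f^p$ harmless: take $f=1+n^{-\delta}(\ln n)^{-\sigma}$ and $\delta=\frac{m-q}{q-m+1}$.

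In case (II), $q\ge m$, we take $f=1+c\,(\ln n)^{-\sigma}$ with small $c$. Then $a_n\approx c\,n^{-1}(\ln n)^{-\sigma-1}$ is small, and $a_n^q\le a_n^{m}$ up to constants, so the case reduces to an $m$-superharmonic comparison with $g=n^{m-1}(\ln n)^{m-1+\epsilon}$.

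In cases (IV) and (V), take $g(n)=e^{\lambda n}$, so that $\theta_n\to\theta=1/(1+e^\lambda)$. Take $f(n)=e^{-sn}$ in (V), and $f(n)=1+e^{-sn}$ in (IV), where $q=m-1$ and $p<0$. Then every term is a geometric sequence in $n$. In (V), $p+q=m-1$ makes both sides scale like $e^{-s(m-1)n}$, and the inequality becomes a condition on constants,
\[
(1-\theta)-\theta e^{s(m-1)}\ \ge\ C(\theta,s)\,(1-e^{-s})^{q-m+1}\bigl(\theta e^{2s}+1-\theta\bigr)^{q/2}.
\]
This is satisfied by choosing $\lambda$ large, hence $\theta$ small, with $s$ fixed. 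In (IV), $\lambda>0$ is arbitrary, and we instead choose $s$ small so that $(1-\theta)e^{-s(m-1)}>\theta$. The right side is $\lesssim a_n^{m-1}$ times a factor that can be made small, namely $(1-e^{-s})^{q}$-type constants relative to $(1-e^{-s})^{m-1}$ (here $q=m-1$). Here we must also check that the constant arising from $|\nabla u|^{q}$ beats the gap; if it does not, we rescale $u\mapsto cu$, which changes the right side by $c^{p+q}$ against $c^{m-1}$ on the left, with $p+q<m-1$.

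The main obstacle is case (V) for $G_{5.2}$, where $q<0$. There $|\nabla u|^q$ blows up as $a_n\to0$, and the restriction $1<m<3$ must enter through the constant inequality above. We expect the dependence on $(1-e^{-s})^{q-m+1}$ versus the gap $(1-\theta)-\theta e^{s(m-1)}$ to be feasible only for $m<3$, and we would verify this by optimizing over $s$ and $\lambda$ explicitly. The remaining steps are routine: the boundary terms at small $n$, handled by modifying $g$ on finitely many levels, and the verification $W_o(n)\asymp\sum_{k\le n}g(k)$.
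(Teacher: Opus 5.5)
Your vertex-radial ansatz cannot work for $(p,q)\in G_{5.2}$, and the obstruction is structural, not a matter of tuning $s,\lambda$.

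\textbf{The root.} For $u=f(d(o,x))$ every neighbor of $o$ carries the value $f(1)$, and each case fails:
\begin{itemize}
\item if $f(1)>f(0)$, then $\Delta_m u(o)>0$;
\item if $f(1)=f(0)$, then $|\nabla u(o)|=0$ and $|\nabla u(o)|^q=\infty$;
\item if $f(1)<f(0)$, the root inequality $a_0^{m-1}\ge f(0)^p2^{-q/2}a_0^{q}$ becomes $(a_0/f(0))^{p}\ge 2^{-q/2}$, since $m-1-q=p>0$. This is impossible because $a_0<f(0)$ and $2^{-q/2}>1$.
\end{itemize}

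\textbf{Your remedies do not help.} Rescaling is useless, since $p+q=m-1$ makes \eqref{eq1} invariant under $u\mapsto cu$. Changing weights near $o$ is useless too. At any local maximum $x$, set $D=\max_{y\sim x}(u(x)-u(y))<u(x)$. Then $-\Delta_m u(x)\le D^{m-1}$ and $|\nabla u(x)|^q\ge 2^{-q/2}D^q$. This gives the same contradiction for every $m>1$ and every weight; it is a sharpened form of the $\mu_-=0$ step in Theorem \ref{Thm1.4}.

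\textbf{Your interior analysis is also off.} With $s$ fixed and $\theta\to0$, the right side of your constant inequality tends to $2^{-q/2}(1-e^{-s})^{-p}>1$. What must become small is $(\theta e^{2s}+1-\theta)^{q/2}$. That needs $\theta e^{2s}\to\infty$ while $\theta e^{(m-1)s}\to0$, i.e.\ $(m-1)s<\lambda<2s$ asymptotically. This, not $(1-e^{-s})^{q-m+1}$, is where $m<3$ enters.

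\textbf{The missing idea} is the paper's two-ended level structure. Fix an edge $op$, give the branch through $p$ the negative levels $D'_{-n}$, and use the weights \eqref{eq3.30}--\eqref{eq3.31}. Every vertex, $o$ included, then has one neighbor on one side and $N-1$ on the other, and the level inequality is essentially the same for all $n\in\mathbb{Z}$. Now $u_n=e^{-a(\lambda-1)n}$ with $\tfrac12<a<\tfrac1{m-1}$ has no extremum and meets both asymptotic requirements as $\lambda\to\infty$.

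\textbf{Sign error in (I).} It sits exactly at the step where $\epsilon$ is used.
\begin{itemize}
\item Since $\beta-\delta(m-1)-m=0$, the flux $g(n)a_n^{m-1}$ is $\asymp(\ln n)^{\gamma-\sigma(m-1)}$. Its increment is therefore $\asymp n^{-1}(\ln n)^{\gamma-\sigma(m-1)-1}$; your exponent loses the $n^{-1}$.
\item The right side is $\asymp n^{-1}(\ln n)^{\gamma-\sigma(p+q)}$. So the left side wins iff $\sigma(p+q-m+1)\ge1$ (equality needing a comparison of constants), not when it is $<1$.
\item With $\sigma$ slightly below $\frac1{p+q-m+1}$, the right side wins by a factor $(\ln n)^{1-\sigma(p+q-m+1)}\to\infty$.
\end{itemize}
A warning sign: under your stated conditions $\epsilon$ would be superfluous, which contradicts Theorem \ref{Thm1.1}(I). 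The admissible window is $\frac1{p+q-m+1}\le\sigma<\frac1{p+q-m+1}+\frac{\epsilon}{m-1}$. The paper takes the left endpoint and a small multiplicative constant in $u$, which is possible since $p+q>m-1$. Both sides are then $\asymp n^{-1}(\ln n)^{\epsilon-1}$, and the left constant is proportional to $\epsilon$. Case (III) needs the same correction, with $q-m+1$ in place of $p+q-m+1$ and a small perturbation amplitude (using $q>m-1$).

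\textbf{Smaller points.}
\begin{itemize}
\item In (II), take $\sigma<\epsilon/(m-1)$ so that the flux $\asymp(\ln n)^{\epsilon-\sigma(m-1)}$ increases.
\item In $G_{5.1}$ with $p>0$, first fix $s$ with $(1-e^{-s})^p>2^{-q/2}$ (this is also the root condition), and only then let $\lambda\to\infty$.
\item Your (IV) profile $1+e^{-sn}$ with $s<\lambda/(m-1)$, followed by $u\mapsto cu$ with $c$ large, is correct. It is a legitimate alternative to the paper's $\delta/(n+n_0)+\delta$.
\end{itemize}
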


\section{Proof of Theorem \ref{Thm1.1} }\label{sec-thm1}

Before proving Theorem \ref{Thm1.1}, we first establish two key lemmas.

\begin{lemma}\label{lem2.1}\rm
    Let $(V,\mu)$ satisfy condition \eqref{p_0} and let $u$ be a non-negative solution to $\eqref{eq1}$. Then either
    \begin{enumerate}
    \item[1.]{ $u\equiv 0$, or}
    \item[2.]{ $u>0$ elsewhere and satisfies the Harnack-type inequality
    \begin{equation}
		\frac{1}{p_1} \leqslant \frac{u(y)}{u(x)} \leqslant p_1\quad\text{for all } y\sim x,
	\end{equation}
    where  $p_1=1+p_0^{\frac{1}{m-1}}$}.
    \end{enumerate}
\end{lemma}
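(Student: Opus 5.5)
The starting point is that a non-negative solution is $m$-superharmonic. If $u\geqslant 0$ solves \eqref{eq1}, then $u^p|\nabla u|^q\geqslant 0$ wherever it is defined, so $\Delta_m u(x)\leqslant 0$ at every $x\in V$. Writing $\phi(t)=|t|^{m-2}t$ (odd and increasing), this reads
\[
\sum_{y\sim x}\mu_{xy}\,\phi\bigl(u(y)-u(x)\bigr)\leqslant 0 .
\]
(For negative $p$ or $q$ the term $u^p|\nabla u|^q$ is understood for positive $u$, or as $+\infty$/excluded. At a zero of $u$ we only use that the nonlinear term is non-negative, or else that the inequality cannot hold there. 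Either way the superharmonicity inequality above is what we retain.)

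\emph{Step 1 (dichotomy).} Suppose $u(x)=0$ at some $x$. Each term $\phi(u(y)-u(x))=\phi(u(y))$ is non-negative and their weighted sum is $\leqslant 0$. Since $\mu_{xy}>0$ for $y\sim x$, this forces $u(y)=0$ for all $y\sim x$. Connectedness of $G$ then propagates the zero set to all of $V$, so $u\equiv 0$. Otherwise $u>0$ everywhere.

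\emph{Step 2 (upper Harnack bound).} Assume $u>0$ and fix $x\sim y$ with $u(y)>u(x)$; otherwise $u(y)/u(x)\leqslant 1\leqslant p_1$ trivially. Move the $y$-term to one side:
\[
\mu_{xy}\,\phi\bigl(u(y)-u(x)\bigr)\leqslant \sum_{z\sim x,\,u(z)<u(x)}\mu_{xz}\bigl(u(x)-u(z)\bigr)^{m-1}\leqslant \mu(x)\,u(x)^{m-1},
\]
using $0<u(x)-u(z)\leqslant u(x)$ for the negative terms and dropping the other positive ones. By \eqref{p_0}, $\mu_{xy}\geqslant \mu(x)/p_0$. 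Hence $(u(y)-u(x))^{m-1}\leqslant p_0\,u(x)^{m-1}$, that is, $u(y)\leqslant (1+p_0^{1/(m-1)})u(x)=p_1u(x)$.

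\emph{Step 3 (lower bound).} Since $x\sim y$ is symmetric and \eqref{p_0} is assumed for every ordered adjacent pair, we may apply Step 2 with the roles of $x$ and $y$ exchanged. This gives $u(x)\leqslant p_1u(y)$, i.e.\ $u(y)/u(x)\geqslant 1/p_1$.

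The only delicate point is Step 1 when $p<0$ or $q<0$, where the nonlinear term is singular at zeros. There I would note that \eqref{eq1} is only meaningful (or automatically contradicts) where $u=0$, and still deduce $\Delta_m u\leqslant 0$ there. The rest is the elementary estimate above.
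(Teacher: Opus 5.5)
Your proposal is correct and follows essentially the same route as the paper: a zero propagates to neighbours because every term of $\Delta_m u$ at that vertex is non-negative; the upper bound comes from isolating one edge with $u(y)>u(x)$, bounding the terms with $u(z)<u(x)$ by $\mu(x)u(x)^{m-1}$, and applying \eqref{p_0}; and the lower bound follows by exchanging $x$ and $y$. Your remark on the singular nonlinear term when $p<0$ or $q<0$ addresses a point the paper passes over, since it simply uses $u^p|\nabla u|^q\geqslant 0$.
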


\begin{proof}\rm
   We first  show that if there exists $x_0\in V$ such that $u(x_0)=0$, then $u\equiv0$. From $\eqref{eq1}$, we have
    \begin{equation*}
	\begin{aligned}
		 \sum_{y\sim x_0}{\frac{\mu_{x_0y}}{\mu (x_0)}|u(y)|^{m-2}u(y)}\leq0.
		\end{aligned}
	\end{equation*}
    Since all terms are non-negative, this implies $u(y)=0$ for all $y\sim x_0$. Connectivity then gives $u\equiv0$.

    Consider now $u>0$. Fix $x\in V$. Using \eqref{eq1}, we have
   \begin{align*}
   	\sum_{y \sim x} \frac{\mu_{xy}}{\mu(x)} |u(y)-u(x)|^{m-2}(u(y)-u(x)) +u(x)^p|\nabla u(x)|^q\le 0.
   \end{align*}
   Since $m>1$ and $u(x)^p|\nabla u(x)|^q\ge 0$, one can see that
   \begin{align}\label{eqb1}
   	\begin{split}
   		\sum_{\substack{y \sim x\\ u(y)\ge u(x)}}\frac{\mu_{xy}}{\mu(x)}(u(y)-u(x))^{m-1}
   		&\le \sum_{\substack{y \sim x\\ u(y)<u(x)}}\frac{\mu_{xy}}{\mu(x)}(u(x)-u(y))^{m-1}\\
   		&\le \sum_{\substack{y \sim x\\ u(y)<u(x)}}\frac{\mu_{xy}}{\mu(x)}u(x)^{m-1}\\
   		&\le u(x)^{m-1}.
   	\end{split}
   \end{align}
  For $y\sim x$ such that $u(y)\ge u(x)$ then \eqref{eqb1} gives
   $$\frac{\mu_{xy}}{\mu(x)}(u(y)-u(x))^{m-1}\le u(x)^{m-1}.$$
   By the condition \eqref{p_0}, we get
   $$u(y)\le \left(1+(\frac{\mu(x)}{\mu_{xy}})^{\frac{1}{m-1}}\right)u(x)\le(1+p_0^{\frac{1}{m-1}})u(x)=p_1u(x),$$
   where $p_1=1+p_0^{\frac{1}{m-1}}$. It is clear that for  $y\sim x$ such that $u(y)< u(x)$ then $u(y)<u(x)<p_1u(x)$. Hence, for $y\sim x$, $u(y)\leq p_1 u(x)$. By exchanging $x$ and $y$, we also have $u(x) \le p_1u(y)$ or $u(y) \ge p_1^{-1}u(x)$. This completes our lemma.
\end{proof}

For brevity, denote
$$
\nabla_{xy}f=f(y)-f(x)\quad \text{for } f\in \iota (V).
$$

Let $\Omega$ be a nonempty subset of $V$. If 
\begin{equation}\label{eq2.5}
    \Delta _m u(x)+u(x)^p|\nabla u(x)|^q\leqslant 0 \quad \mbox{for all $x\in \Omega$},
\end{equation}
then we say that $u$ satisfies \eqref{eq2.5} in $\Omega$.

\begin{lemma}\label{lem2.2}\rm
    Assume $p+q \neq m-1$, $(V,\mu)$ satisfies condition \eqref{p_0}, and $\Omega$ is a nonempty subset of  $V$. Let $u$ be a nontrivial positive function on $V$ which satisfies $\eqref{eq2.5}$, and $\frac{1}{p_1}\leqslant \frac{u(x)}{u(y)} \leqslant p_1$ for any $y\sim x$. Moreover, when $\Omega \neq V$, assume $u$ also satisfies  $u(y)-u(x)\geq 0$ for any $x\in \Omega, y\in \Omega^c$ and $x\sim y$. Then there exists a positive pair $(s,t)$ such that for any $0\leqslant \varphi \leqslant 1$ with compact support in $\Omega$, the following two estimates 
    \begin{equation}\label{eq2.6}
		\begin{split}
			&\sum_{x\in \Omega} \mu(x) u(x)^{p-t} |\nabla u(x)|^q \varphi(x)^s\\
			&\leqslant C_{p_1,t}\left( \frac{(2s)^{\frac{mp+q+t(q-m)}{p+q-m+1}}}{t^{\frac{p(m-1)+t(q-m+1)}{p+q-m+1}}}\right)   
			\left( \sum_{\substack{x,y\in \Omega\\
					\nabla _{xy}\varphi \ne 0}}{\mu _{xy}\varphi \left( x \right) ^su\left( x \right) ^{p-t}|\nabla u\left( x \right)}|^q \right) ^{\small{\frac{m-1-t}{p+q-t}}}
			\\
			&\quad \times \left( \sum_{\substack{x,y\in\Omega \\ \nabla_{xy}\varphi\ne0}} \mu_{xy} |\nabla_{xy} \varphi|^{\frac{mp+q+t(q-m)}{p+q-m+1}}\right) ^{\frac{p+q-m+1}{p+q-t}},
		\end{split}
	\end{equation}
and
    \begin{equation}\label{eq2.7}
	\begin{split}
		\sum_{x\in \Omega}\mu(x) u(x)^{p-t}|\nabla u(x)|^q \varphi(x)^s 
		\lesssim \left( \sum_{\substack{x,y\in\Omega \\ \nabla_{xy}\varphi\ne0}} \mu_{xy} |\nabla_{xy} \varphi|^{\frac{mp+q+t(q-m)}{p+q-m+1}}\right) ^{\frac{p+q-m+1}{p+q-t}}
	\end{split}
    \end{equation}
hold, where $s,t$ satisfy
\begin{equation}\label{eq2.8}
	\left\lbrace \begin{aligned}
		&\frac{mp+q+t(q-m)}{(m-1)p+t(q-m+1)} > 1,\\
		&\frac{p+q-t}{m-1-t} > 1,\\
		&s>\frac{mp+q+t(q-m)}{p+q-m+1}.
    \end{aligned}\right. 
\end{equation}
\end{lemma}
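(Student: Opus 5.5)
Test the inequality against $\varphi^s u^{-t}$ and sum by parts on the graph; this is the discrete analogue of the Mitidieri–Pohozaev test-function method used on manifolds in \cite{SXX22}. Multiply \eqref{eq2.5} by $\mu(x)\varphi(x)^s u(x)^{-t}$ and sum over $x\in\Omega$. Writing $\psi=\varphi^s u^{-t}$, which vanishes off $\Omega$, gives
\begin{equation*}
\sum_{x\in\Omega}\mu(x)u^{p-t}|\nabla u|^q\varphi^s\le -\sum_{x\in\Omega}\sum_{y\sim x}\mu_{xy}|\nabla_{xy}u|^{m-2}\nabla_{xy}u\,\psi(x).
\end{equation*}
The right-hand side is split into edges with both endpoints in $\Omega$ and edges leaving $\Omega$. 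For the leaving edges the hypothesis $\nabla_{xy}u\ge0$ makes their contribution nonpositive, so they can be dropped. For the interior edges, symmetrize in $x,y$ to get $\frac12\sum_{x,y\in\Omega}\mu_{xy}|\nabla_{xy}u|^{m-2}\nabla_{xy}u\,\nabla_{xy}\psi$. Then split $\nabla_{xy}\psi = \varphi(y)^s\nabla_{xy}(u^{-t}) + u(x)^{-t}\nabla_{xy}(\varphi^s)$.

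The $u^{-t}$ part yields a ``good'' negative term. By the mean value theorem and the Harnack bound $u(y)/u(x)\in[p_1^{-1},p_1]$ from Lemma \ref{lem2.1},
\begin{equation*}
\nabla_{xy}u\,\nabla_{xy}(u^{-t})\le -c_{p_1,t}\,t\,u(x)^{-t-1}|\nabla_{xy}u|^2 .
\end{equation*}
Moving it to the left produces $t\sum\mu_{xy}\varphi(y)^su(x)^{-t-1}|\nabla_{xy}u|^m$ on the left-hand side. For the cutoff part, use $|\nabla_{xy}\varphi^s|\le s\max(\varphi(x),\varphi(y))^{s-1}|\nabla_{xy}\varphi|$; only edges with $\nabla_{xy}\varphi\ne0$ contribute. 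Young's inequality with exponents $\frac{m}{m-1},m$ absorbs the $|\nabla_{xy}u|^{m-1}$ factor into the good term. What remains is a bound of the form
\begin{equation*}
\mathrm{LHS}+\tfrac t2\sum\mu_{xy}\varphi^s u^{-t-1}|\nabla_{xy}u|^m\lesssim \frac{s^m}{t^{m-1}}\sum_{\nabla_{xy}\varphi\ne0}\mu_{xy}\varphi^{s-m}u^{m-1-t}|\nabla_{xy}\varphi|^m .
\end{equation*}
Here $\varphi(x)$ and $\varphi(y)$ are to be compared, which is the obstacle discussed below. We also use $u(x)\asymp u(y)$.

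Next, bound the right-hand side by the left-hand side via Hölder. Write $u^{m-1-t}=(u^{p-t}|\nabla u|^q)^{\theta}\,u^{\beta}|\nabla u|^{-q\theta}$, where $\theta=\frac{m-1-t}{p+q-t}$ (less than $1$ by \eqref{eq2.8}). The leftover powers of $u$ and $|\nabla u|$ are controlled by the good $|\nabla_{xy}u|^m u^{-t-1}$ term, using $|\nabla u(x)|^2\asymp\sum_y\frac{\mu_{xy}}{\mu(x)}|\nabla_{xy}u|^2$ together with \eqref{p_0}, which gives $|\nabla_{xy}u|\le \sqrt{2p_0}|\nabla u(x)|$. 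This is a three-factor Hölder whose exponents are dictated by homogeneity. The resulting exponent on $|\nabla_{xy}\varphi|$ is $\frac{mp+q+t(q-m)}{p+q-m+1}$, and the first condition in \eqref{eq2.8} guarantees the Hölder exponents are admissible. The condition $s>\frac{mp+q+t(q-m)}{p+q-m+1}$ ensures the leftover power of $\varphi$ is nonnegative, so it can be bounded by $1$. Tracking constants gives \eqref{eq2.6}, with the stated powers of $2s$ and $t$. Finally, since the sum restricted to edges with $\nabla\varphi\ne0$ is at most $p_0$ times the full left-hand side, the power-$\theta$ factor can be absorbed by Young's inequality ($\theta<1$); this yields \eqref{eq2.7}.

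The main obstacle is the mismatch between vertex terms, which are evaluated at $x$, and edge terms, which involve both $\varphi(x)$ and $\varphi(y)$. In particular, $\varphi(y)^s$ in the good term must be compared with $\varphi(x)^{s}$ and with $\varphi^{s-1}$ factors. Harnack handles $u$ but not $\varphi$. I would try first to keep the good term with $\varphi(y)^s$ and perform Young with the weight $\varphi(y)^{s(m-1)/m}$, bounding $\max(\varphi(x),\varphi(y))^{s-1}$ by splitting into the cases $\varphi(x)\le\varphi(y)$ and $\varphi(x)>\varphi(y)$. In the second case, use $\varphi(x)^{s}-\varphi(y)^s$ directly, and use the factor $2$ (whence $(2s)$ in \eqref{eq2.6}) to dominate. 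The fallback is to choose cutoffs with $\varphi(x)\le 2\varphi(y)$ on the relevant edges.
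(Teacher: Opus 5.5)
Your overall scheme is the paper's: test with $\varphi^s u^{-t}$, sum by parts, and use the mean value theorem plus Harnack on the $u^{-t}$ part. The gap is in the Young/H\"older step, which does not close when $q>0$.

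Young with exponents $\frac{m}{m-1}$ and $m$ removes every factor of $|\nabla u|$ from the remainder, leaving $\sum\mu_{xy}\varphi^{s-m}u^{m-1-t}|\nabla_{xy}\varphi|^m$. To dominate this by powers $\theta,\theta'$ of $\sum_x\mu(x)u^{p-t}|\nabla u|^q\varphi^s$ and of the good term $\sum_x\mu(x)u^{-t-1}|\nabla u|^m\varphi^s$, the powers of $|\nabla u|$ must cancel, i.e.\ $q\theta+m\theta'=0$. For $q>0$ this forces $\theta'<0$, which H\"older cannot produce. Concretely, if $\nabla u$ vanishes on the support of $\varphi$ (allowed for $q>0$, since both terms of \eqref{eq2.5} then vanish), both sums are zero while your remainder is not. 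Yet $q>0$ is exactly what is needed for $G_2$, $G_3$, $G_4$ and much of $G_1$. (For $q<0$ a three-factor H\"older does exist, but its exponent on the left-hand side is $\frac{m(m-1-t)}{mp+q+t(q-m)}$, not $\frac{m-1-t}{p+q-t}$; the latter appears only after a further Young absorption.)

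The missing idea is to choose the Young exponents so that the remainder keeps exactly the right power of the gradient. After bounding $|\nabla_{xy}u|^{m-1}\le(2p_0)^{\frac{m-1}{2}}|\nabla u(x)|^{m-1}$, the paper uses the conjugate pair $a=\frac{mp+q+t(q-m)}{p+q-t}$, $b=\frac{a}{a-1}=\frac{mp+q+t(q-m)}{(m-1)p+t(q-m+1)}$. This is where the first condition in \eqref{eq2.8} enters, not in a H\"older step. The remainder is then $u(x)^{a-1-t}|\nabla u(x)|^{m-a}\varphi(x)^{s-a}|\nabla_{xy}\varphi|^a$. Since $a-1-t=\frac{(p-t)(m-1-t)}{p+q-t}$ and $m-a=\frac{q(m-1-t)}{p+q-t}$, this equals $(u^{p-t}|\nabla u|^q\varphi^s)^{1/\gamma}$ times a pure $\varphi$-factor, with $\gamma=\frac{p+q-t}{m-1-t}$. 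A two-factor H\"older with $\gamma$ and $\rho=\frac{p+q-t}{p+q-m+1}$ then gives the exponent $a\rho=\frac{mp+q+t(q-m)}{p+q-m+1}$ for either sign of $q$.

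There are two further points.

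First, the $\varphi(x)$ versus $\varphi(y)$ mismatch is not a real obstacle. The double sum $\sum_{x,y}\mu_{xy}(\cdots)$ is invariant under $x\leftrightarrow y$, so together with Harnack it turns $\mu_{xy}\varphi(y)^su(x)^{-t-1}|\nabla_{xy}u|^m$ into $\mu_{xy}\varphi(x)^su(y)^{-t-1}|\nabla_{xy}u|^m\ge p_1^{-t-1}\mu_{xy}\varphi(x)^su(x)^{-t-1}|\nabla_{xy}u|^m$. Likewise, after $\eta^{s-1}\le\varphi(x)^{s-1}+\varphi(y)^{s-1}$, the $\varphi(y)^{s-1}$ part of the cutoff term becomes a $\varphi(x)^{s-1}$ part at the cost of a factor $p_1^t$. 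The paper also splits $\nabla_{xy}\psi=u(y)^{-t}\nabla_{xy}(\varphi^s)+\varphi(x)^s\nabla_{xy}(u^{-t})$, so the good term carries $\varphi(x)^s$ from the start. Your fallback of cutoffs with $\varphi(x)\le 2\varphi(y)$ is impossible: at the edge of the support there are $x\sim y$ with $\varphi(x)>0=\varphi(y)$. In any case the lemma concerns arbitrary $\varphi$.

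Second, you should not discard the edges leaving $\Omega$. The paper absorbs against the vertex quantity $\sum_{x\in\Omega}\mu(x)u(x)^{-t-1}|\nabla u(x)|^m\varphi(x)^s$, which involves all neighbours of $x$, including those in $\Omega^c$. It recovers those edges from $0\le\nabla_{xy}u\le(p_1-1)u(x)$, which gives $|\nabla_{xy}u|^{m-1}\ge\frac{|\nabla_{xy}u|^m}{(p_1-1)u(x)}$ for $x\in\Omega$, $y\in\Omega^c$. When $q<0$ and $\Omega\neq V$ (the $G_{5.2}$ application) there is no edgewise alternative, because $|\nabla_{xy}u|^{m-a}$ with $m-a<0$ is not bounded by a multiple of $|\nabla u(x)|^{m-a}$. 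Your own H\"older step appeals to $|\nabla u(x)|^2\asymp\sum_y\frac{\mu_{xy}}{\mu(x)}|\nabla_{xy}u|^2$, so it needs these boundary edges for the same reason.
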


\begin{proof}\rm
    Let  $\varphi \in \iota(V)$ with compact support in $\Omega$. Define $\psi = \varphi^s u^{-t}$, where $(s,t)$ are to be chosen later.

    Multiplying both sides of $\eqref{eq2.5}$ by $\mu (x) \psi (x)$ and summing up over all $x\in \Omega$, we obtain
    $$
	\sum_{x\in\Omega,y\in V}{\mu_{xy} |\nabla_{xy} u|^{m-1} \sgn(\nabla_{xy}u) \psi(x)} + \sum_{x\in\Omega}{\mu(x) u(x)^p |\nabla u(x)|^q \psi(x)} \leqslant 0. 
	$$
    It follows that 
		\begin{equation}\label{eq2.9}
			\begin{aligned}
				&\sum_{x,y\in\Omega}{\mu_{xy} |\nabla_{xy} u|^{m-1} \sgn(\nabla_{xy}u) \psi(x)} \\
				&+ \sum_{x\in\Omega,y\in \Omega^c}{\mu_{xy} |\nabla_{xy} u|^{m-1} \sgn(\nabla_{xy}u) \psi(x)} \\
				& + \sum_{x\in\Omega}{\mu(x) u(x)^p |\nabla u(x)|^q \psi(x)}\leqslant0.
			\end{aligned}
		\end{equation}
    Specially, when $\Omega=V$, we have
    $$
    \sum\limits_{x\in\Omega,y\in \Omega^c}{\mu_{xy} |\nabla_{xy} u|^{m-1} \sgn(\nabla_{xy}u) \psi(x)} = 0.
    $$

    Observe that
    $$
    \sum_{x,y\in\Omega}{\mu_{xy} |\nabla_{xy} u|^{m-1} \sgn(\nabla_{xy}u) \psi(x)} 
    =  - \frac{1}{2} \sum_{x,y\in\Omega}{\mu_{xy} |\nabla_{xy} u|^{m-1} \sgn(\nabla_{xy}u) (\nabla_{xy} \psi)},
    $$
    and
    $$
    \nabla_{xy} \psi = \nabla_{xy}(\varphi^s u^{-t}) 
    = u(y)^{-t} \nabla_{xy}(\varphi^s) + \varphi(x)^s \nabla_{xy}(u^{-t}).
    $$
    Then $\eqref{eq2.9}$ becomes
    \begin{equation}
    	\begin{aligned}\label{eq2.10}
    		&-\frac{1}{2} \sum_{x,y\in\Omega}{\mu_{xy} |\nabla_{xy}u|^{m-1} \sgn(\nabla_{xy}u) \nabla_{xy}(u^{-t}) \varphi(x)^s} + \sum_{x\in\Omega}{\mu(x) u(x)^{p-t} |\nabla u(x)|^q \varphi(x)^s}\\
    		&\qquad \qquad \qquad+\sum_{x\in\Omega,y\in\Omega^c} {\mu_{xy} |\nabla_{xy}u|^{m-1} \sgn(\nabla_{xy}u) \varphi(x)^s u(x)^{-t}} 
    		\\
    		&\qquad \qquad \qquad\leqslant \frac{1}{2} \sum_{x,y\in\Omega}{\mu_{xy} |\nabla_{xy}u|^{m-1} \sgn(\nabla_{xy}u) \nabla_{xy}(\varphi^s) u(y)^{-t}}.
    	\end{aligned}
    \end{equation}

    By the mean value theorem, there exists some $\xi$ taking a value between $u(y)$ and $u(x)$ such that
    $$
    \nabla_{xy}(u^{-t}) = u(y)^{-t} - u(x)^{-t} = -t\xi ^{-t-1} \nabla_{xy}u.
    $$
    Since $\frac{1}{p_1}\leqslant \frac{u(x)}{u(y)} \leqslant p_1$, we have $\frac{u(x)}{p_1} \leqslant \xi \leqslant u(x) p_1$ and
    \begin{equation}
    	\begin{aligned}\label{eq2.11}
    		-\frac{1}{2}& \sum_{x,y\in\Omega}{\mu_{xy} |\nabla_{xy}u|^{m-1} \sgn(\nabla_{xy}u) \nabla_{xy}(u^{-t}) \varphi(x)^s} \\
    		& = \frac{t}{2} \sum_{x,y\in\Omega}{\mu_{xy} |\nabla_{xy}u|^{m-1} \sgn(\nabla_{xy}u) \nabla_{xy}u \varphi(x)^s \xi^{-t-1}} \\
    		&= \frac{t}{2} \sum_{x,y\in\Omega}{\mu_{xy} |\nabla_{xy}u|^{m}   \varphi(x)^s \xi^{-t-1}} \\
    		& \geqslant \frac{t}{2p_1^{t+1}} \sum_{x,y\in\Omega}{\mu_{xy} |\nabla_{xy}u|^{m} u(x)^{-t-1}  \varphi(x)^s} .
    	\end{aligned}
    \end{equation}
    By $0\leqslant \frac{u(y)-u(x)}{u(x)} \leqslant \frac{p_1 u(x)-u(x)}{u(x)} = p_1 - 1$, we obtain
    
    \begin{equation}
    		\begin{aligned}\label{eq2.12}
    		\sum_{x\in\Omega,y\in\Omega^c} &{\mu_{xy} |\nabla_{xy}u|^{m-1} \sgn(\nabla_{xy}u) \varphi(x)^s u(x)^{-t}} \\
    		&\geqslant \sum_{x,y\in \{x\in\Omega,y\in\Omega^c \text{ and } u(x)\neq u(y)\}} {\mu_{xy} |\nabla_{xy} u|^m \frac{u(x)}{\nabla_{xy} u} u(x)^{-t-1} \varphi(x)^s}  \\
    		&\geqslant \frac{1}{p_1-1} \sum_{x\in\Omega,y\in\Omega^c} {\mu_{xy} |\nabla_{xy} u|^m u(x)^{-t-1} \varphi(x)^s }\\
    		& \geqslant \frac{t}{2p_1^{t+1}} \sum_{x\in\Omega,y\in\Omega^c} {\mu_{xy} |\nabla_{xy} u|^m u(x)^{-t-1} \varphi(x)^s } ,
     	   \end{aligned}
    \end{equation}

    where we have used that $2p_1^{t+1}\geqslant t(p_1-1)$ holds for all $t\geqslant0$.

    Combining $\eqref{eq2.11}$ with $\eqref{eq2.12}$, we get
    
    \begin{equation}
    	   \begin{aligned}\label{eq2.13}
    		-\frac{1}{2}  &\sum_{x,y\in\Omega}{\mu_{xy} |\nabla_{xy}u|^{m-1} \sgn(\nabla_{xy}u) \nabla_{xy}(u^{-t}) \varphi(x)^s} \\
    		& + \sum_{x\in\Omega,y\in\Omega^c} {\mu_{xy} |\nabla_{xy}u|^{m-1} \sgn(\nabla_{xy}u) \varphi(x)^s u(x)^{-t}} \\
    		&\geqslant \frac{t}{2p_1^{t+1}}  \sum_{x\in\Omega,y\in\ V} {\mu_{xy} |\nabla_{xy} u|^m u(x)^{-t-1} \varphi(x)^s } \\
    		& = \frac{t}{2p_1^{t+1}} \sum_{x\in \Omega}{( \sum_{y\in V}{\frac{\mu_{xy}}{\mu (x)} \left( |\nabla_{xy}u|^2 \right)^{\frac{m}{2}})  \mu (x) u(x)^{-t-1} \varphi(x)^s} } \\
    		& \geqslant \frac{t}{2p_1^{t+1}} 2^{\frac{m}{2}} \sum_{x\in \Omega} \left( \sum_{y\in V} \frac{\mu_{xy}}{2\mu (x)} |\nabla_{xy} u|^2 \right) ^{\frac{m}{2}} \mu(x) u(x)^{-t-1} \varphi(x)^s \\
    		& \geqslant \frac{t}{p_1^{t+1}} \sum_{x\in \Omega} \mu(x) |\nabla u(x)|^m u(x)^{-t-1} \varphi(x)^s.
    	  \end{aligned}
    \end{equation}

    In particular, when $\Omega=V$, $\eqref{eq2.13}$ follows directly from $\eqref{eq2.11}$.

    By the mean value theorem, there exists some $\eta$ taking a value between $\varphi(y)$ and $\varphi(x)$ such that
    \begin{equation} \label{eq2.14}
	\begin{aligned}
		\nabla_{xy}(\varphi^s) = s \eta^{s-1} (\varphi(y)-\varphi(x)) = s\eta^{s-1} \nabla_{xy}\varphi .
	\end{aligned}
    \end{equation}
    Substituting $\eqref{eq2.14}$ and $\eqref{eq2.13}$ into $\eqref{eq2.10}$, we have
    \begin{equation}\label{eq2.15}
	\begin{aligned}
		\frac{t}{p_1^{t+1}} &\sum_{x\in \Omega} \mu(x) \varphi(x)^s u(x)^{-t-1} |\nabla u(x)|^m  
		+  \sum_{x\in\Omega}{\mu(x) u(x)^{p-t} |\nabla u(x)|^q \varphi(x)^s}  \\
		& \leqslant \frac{s}{2} \sum_{x,y\in \Omega} \mu_{xy} u(y)^{-t} \eta^{s-1} |\nabla_{xy}u|^{m-1} \sgn(\nabla_{xy}u) \nabla_{xy}\varphi .
	\end{aligned}
    \end{equation}
    Observe that 
    $$
    |\nabla u(x)|^2 = \sum_{y\in V} \frac{\mu_{xy}}{2\mu(x)} (\nabla_{xy} u)^2,
    $$
    and $\frac{1}{2p_1} \leqslant \frac{\mu_{xy}}{2\mu(x)} \leqslant \frac{1}{2}$, we derive
    \begin{equation}
	|\nabla_{xy}u| \leqslant \sqrt{2p_1} |\nabla u(x)|, \text{ for any  } y\sim x.
    \end{equation}
    Since $\eta^{s-1} \leqslant \varphi(x)^{s-1} + \varphi(y)^{s-1},\frac{u(x)}{p_1} \leqslant \xi \leqslant u(x)p_1$, we have
    \begin{equation}
    	\begin{aligned}\label{eq2.17}
    		\frac{s}{2}& \sum_{x,y\in \Omega} \mu_{xy} u(y)^{-t} \eta^{s-1} |\nabla_{xy}u|^{m-1} \sgn(\nabla_{xy}u) \nabla_{xy}\varphi   \\
    		&\leqslant \frac{s}{2} \sum_{x,y\in \Omega} \mu_{xy} u(y)^{-t} \left( \varphi(x)^{s-1} + \varphi(y)^{s-1}\right) |\nabla_{xy}u|^{m-1} |\nabla_{xy}\varphi|\\
    		&\leqslant \frac{s}{2} (1+p_1^t) \sqrt{2p_1} \sum_{x,y\in \Omega} \mu_{xy} u(y)^{-t} \varphi(x)^{s-1} |\nabla u(x)|^{m-1} |\nabla_{xy}\varphi| .
    	\end{aligned}
    \end{equation}

    Let
    \begin{equation}\label{eq2.18}
	  a=\frac{mp+q+t(q-m)}{p+q-t}, \quad b= 
       \frac{mp+q+t(q-m)}{(m-1)p+t(q-m+1)},
    \end{equation}
    and $t$ will be chosen later to ensure $a,b\geqslant1$.

    Using Young's inequality, we obtain
    \begin{equation}
    	\begin{aligned}\label{eq2.19}
    		s\sum_{x,y\in \Omega}& \mu_{xy} u(y)^{-t} \varphi(x)^{s-1} |\nabla u(x)|^{m-1} |\nabla_{xy}\varphi| \\
    		&= \sum_{x,y\in \Omega} \left( \mu_{xy}^{\frac{1}{b}} (\frac{t}{2})^{\frac{1}{b}} u(x)^{-\frac{t+1}{b}} |\nabla u(x)|^{\frac{m}{b}} \varphi(x)^{\frac{s}{b}} \right) \\
    		& \quad\times \left( \mu_{xy}^{\frac{1}{a}} s (\frac{t}{2})^{-\frac{1}{b}} u(x)^{-t+\frac{t+1}{b}} |\nabla u(x)|^{m-1-\frac{m}{b}} \varphi(x)^{s-1-\frac{s}{b}} |\nabla_{xy}\varphi|\right)  \\
    		& \leqslant \frac{\epsilon t}{2} \sum_{x,y\in \Omega} \mu_{xy} u(x)^{-t-1} |\nabla u(x)|^m \varphi(x)^s  \\
    		& \quad+ C(\epsilon) t^{1-a} 2^{a-1} s^a \sum_{x,y\in \Omega} \mu_{xy} u(x)^{-t+a-1} |\nabla u(x)|^{m-a} \varphi(x)^{s-a} |\nabla_{xy}\varphi|^a \\
    		& \leqslant \frac{\epsilon t}{2} \sum_{x\in\Omega} \mu(x) u(x)^{-t-1} |\nabla u(x)|^m \varphi(x)^s   \\
    		& \quad+ C(\epsilon) t^{1-a} 2^{a-1} s^a \sum_{x,y\in \Omega} \mu_{xy} u(x)^{-t+a-1} |\nabla u(x)|^{m-a} \varphi(x)^{s-a} |\nabla_{xy}\varphi|^a ,\\
    	\end{aligned}
    \end{equation}
    where $C(\epsilon) = \frac{b^{-\frac{a}{b}}}{a} \epsilon^{-\frac{a}{b}}$.

    Let $\epsilon = \frac{2}{(1+p_1^t)p_1^{t+1}\sqrt{2p_1}}$, and substitute $\eqref{eq2.19}$ into $\eqref{eq2.17}$, we derive that
    \begin{align}\label{eq2.20}
\begin{split}
			\frac{s}{2} &\sum_{x,y\in \Omega} \mu_{xy} u(y)^{-t} \eta^{s-1} |\nabla_{xy} u|^{m-1} \sgn(\nabla_{xy} u) \nabla_{xy}\varphi \\
		&\geqslant \frac{t}{2p_1^{t+1}} \sum_{x\in \Omega} \mu(x) u(x)^{-t-1} |\nabla u(x)|^m \varphi(x)^s  \\
		& + C_{p_1,t} (2s)^a t^{1-a} \sum_{x,y\in \Omega} \mu_{xy} u(x)^{-t+a-1} |\nabla u(x)|^{m-a} \varphi(x)^{s-a} |\nabla_{xy}\varphi|^a,
\end{split}
	\end{align} 
    where 
    \begin{equation*}
	\begin{aligned}
		&C_{p_1,t} = \left( \sqrt{2p_1}\left( 1+{p_1}^t \right) \right) ^{\frac{\left( m-1 \right) p+t\left( q-m+1 \right)}{p+q-t}+1}\left( p_{1}^{t+1} \right) ^{\frac{\left( m-1 \right) p+t\left( q-m+1 \right)}{p+q-t}} C_t,\\
		&C_t = 2^{-1-\frac{\left( m-1 \right) p+t\left( q-m+1 \right)}{p+q-t}}\frac{\left( m-1 \right) p+t\left( q-m+1 \right)}{p+q-t}.
	\end{aligned}
    \end{equation*}
    Combining $\eqref{eq2.20}$ and $\eqref{eq2.15}$, we have
    \begin{equation*}
	\begin{aligned}
		\frac{t}{2p_1^{t+1}} &\sum_{x\in \Omega} \mu(x) \varphi(x)^s u(x)^{-t-1} |\nabla u(x)|^m 
		+  \sum_{x\in \Omega} \mu(x) u(x)^{p-t} |\nabla u(x)|^q \varphi(x)^s \\
		&\leqslant C_{p_1,t} (2s)^a t^{1-a} \sum_{x,y\in \Omega} \mu_{xy} u(x)^{-t+a-1} |\nabla u(x)|^{m-a} \varphi(x)^{s-a} |\nabla_{xy}\varphi|^a .
	\end{aligned}
    \end{equation*}
    It follows that
    \begin{align}\label{eq2.21}
	\begin{split}
			\sum_{x\in \Omega} &\mu(x) u(x)^{p-t} |\nabla u(x)|^q \varphi(x)^s \\
		&\leqslant C_{p_1,t} (2s)^a t^{1-a} \sum_{x,y\in \Omega} \mu_{xy} u(x)^{-t+a-1} |\nabla u(x)|^{m-a} \varphi(x)^{s-a} |\nabla_{xy}\varphi|^a .
	\end{split}
	\end{align}

    Let $\gamma = \frac{p+q-t}{m-1-t},\rho = \frac{p+q-t}{p+q-m+1}$, choose an appropriate $t$ such that $\gamma,\rho >1$, and then apply $\mathrm{H\ddot{o}lder}$'s inequality to RHS of $\eqref{eq2.21}$ to obtain   
	\begin{align}\label{eq2.22}
        \sum_{x,y\in \Omega} &\mu_{xy} u(x)^{-t+a-1} |\nabla u(x)|^{m-a} \varphi(x)^{s-a} |\nabla_{xy}\varphi|^a  \nonumber\\
        & = \sum_{\substack{x,y\in\Omega \\ \nabla_{xy}\varphi\ne0}} \mu_{xy} \left( u(x)^{-t+a-1} |\nabla u(x)|^{m-a} \varphi(x)^{\frac{s}{\gamma}} \right) 
        \left( \varphi(x)^{s-a-\frac{s}{\gamma}} |\nabla_{xy}\varphi|^a\right)  \\
        & \leqslant \left( \sum_{\substack{x,y\in\Omega \nonumber\\ \nabla_{xy}\varphi\ne0}} \mu_{xy} u(x)^{p-t} |\nabla u(x)|^q \varphi(x)^s\right) ^{\frac{1}{\gamma}} 
        \left( \sum_{\substack{x,y\in\Omega \\ \nabla_{xy}\varphi\ne0}} \mu_{xy} \varphi(x)^{s-a\rho} |\nabla_{xy}\varphi|^{a\rho} \right) ^{\frac{1}{\rho}} .
	\end{align}
    Choose sufficiently large $s$ such that $s\geqslant a\rho$. Noting that $0\leqslant\varphi\leqslant1$, we have
    \begin{align}\label{eq2.23}
	\begin{split}
			\sum_{x,y\in \Omega} &\mu_{xy} u(x)^{-t+a-1} |\nabla u(x)|^{m-a} \varphi(x)^{s-a} |\nabla_{xy}\varphi|^a \\
		&\leqslant \left( \sum_{\substack{x,y\in\Omega \\ \nabla_{xy}\varphi\ne0}} \mu_{xy} u(x)^{p-t} |\nabla u(x)|^q \varphi(x)^s\right) ^{\frac{1}{\gamma}} 
		\left( \sum_{\substack{x,y\in\Omega \\ \nabla_{xy}\varphi\ne0}} \mu_{xy}  |\nabla_{xy}\varphi|^{a\rho} \right) ^{\frac{1}{\rho}} .
	\end{split}
    \end{align}
    Substituting $\eqref{eq2.23}$ and $\eqref{eq2.18}$ into $\eqref{eq2.21}$, we get
    \begin{equation*}
	\begin{aligned}
		\sum_{x\in \Omega} &\mu(x) u(x)^{p-t} |\nabla u(x)|^q \varphi(x)^s \\
		&\leqslant C_{p_1,t} (2s)^{\frac{mp+q+t(q-m)}{p+q-t}} t^{-\frac{p(m-1)+t(q-m+1)}{p+q-t}} \left( \sum_{\substack{x,y\in\Omega \\ \nabla_{xy}\varphi\ne0}} \mu(x) u(x)^{p-t} |\nabla u(x)|^q \varphi(x)^s\right)^{\frac{m-1-t}{p+q-t}}  \\
		&\quad \times \left( \sum_{\substack{x,y\in\Omega \\ \nabla_{xy}\varphi\ne0}} \mu_{xy} |\nabla_{xy}\varphi(x)|^{\frac{mp+q+t(q-m)}{p+q-m+1}} \right) ^{\frac{p+q-m+1}{p+q-t}},
	\end{aligned}
    \end{equation*}
    then $\eqref{eq2.6}$ holds.

    Observing that $\sum\limits_{x\in \Omega} \mu(x) u(x)^{p-t} |\nabla u(x)|^q \varphi(x)^s < \infty$ and
    \begin{equation*}
	\begin{aligned}
		\sum_{\substack{x,y\in\Omega \\ \nabla_{xy}\varphi\ne0}} \mu_{xy} u(x)^{p-t} |\nabla u(x)|^q \varphi(x)^s 
		&\leqslant \sum_{x\in\Omega,y\in V} \mu_{xy} u(x)^{p-t} |\nabla u(x)|^q \varphi(x)^s  \\
		&=\sum_{x\in \Omega} \mu(x) u(x)^{p-t} |\nabla u(x)|^q \varphi(x)^s.
	\end{aligned}
    \end{equation*}
    It follows that
    \begin{equation*}
	\begin{split}
		&\left( \sum_{x\in \Omega} \mu(x) u(x)^{p-t} |\nabla u(x)|^q \varphi(x)^s\right) ^{\frac{p+q-m+1}{p+q-t}}   \\
		&\leqslant  C_{p_1,t} (2s)^{\frac{mp+q+t(q-m)}{p+q-t}} t^{-\frac{p(m-1)+t(q-m+1)}{p+q-t}} \left( \sum_{\substack{x,y\in\Omega \\ \nabla_{xy}\varphi\ne0}} \mu_{xy} |\nabla_{xy}\varphi(x)|^{\frac{mp+q+t(q-m)}{p+q-m+1}} \right) ^{\frac{p+q-m+1}{p+q-t}} .
	\end{split}
    \end{equation*}
    Consequently,
    \begin{equation*}
	\begin{aligned}
		\sum_{x\in \Omega} \mu(x) u(x)^{p-t} |\nabla u(x)|^q \varphi(x)^s
		\leqslant &\left( C_{p_1,t}\right) ^{\frac{p+q-t}{p+q-m+1}} (2s)^{\frac{mp+q+t(q-m)}{p+q-m+1}} t^{-\frac{p(m-1)+t(q-m+1)}{p+q-m+1}} \\
		& \times\sum_{\substack{x,y\in\Omega \\ \nabla_{xy}\varphi\ne0}} \mu_{xy} |\nabla_{xy}\varphi|^{\frac{mp+q+t(q-m)}{p+q-m+1}} .
	\end{aligned}
    \end{equation*}
    Then $\eqref{eq2.7}$ follows.
    \end{proof}

\begin{remark}\label{rm2.3}
     In Lemma $\ref{lem2.2}$, since $s$ only needs to be sufficiently large, it is sufficient to verify the existence of $t$. For convenience, let us divide $\mathbb{R}^2\backslash \{p+q=m-1\}$ into four parts:
     \begin{equation*}
		\begin{aligned}
			K_1 = \{(p,q)|p<m-1-q,q\leqslant m-1\},\quad K_2 = \{(p,q)|p\geqslant0,m-1-p<q\leqslant m-1\},  \\
			K_3 = \{(p,q)|p>m-1-q,q>m-1\},\quad K_4 = \{(p,q)|p<0,m-1<q<m-1-p\}.
		\end{aligned}
	\end{equation*}
    
   % \begin{figure}[htp]
	%\begin{center}
	%	\includegraphics[scale=1.2]{figure2.pdf}
	%	\caption{}\label{fig2}
	%\end{center}
   % \end{figure}

    In different cases, we choose $t$ in the following ways
    \begin{enumerate}
    \item[1.]{If $(p,q)\in K_1$, take $t>m-1$;}
    \item[2.]{If $(p,q)\in K_2$, take $0<t<m-1$;}
    \item[3.]{If $(p,q)\in K_3$, take $\max\{\frac{p(1-m)}{q-m+1},0\} < t < m-1$;}
	\item[4.]{If $(p,q)\in K_4$, take $m-1<t<\frac{p(1-m)}{q-m+1}.$}
    \end{enumerate}
\end{remark}

\begin{proof}[Proof of Theorem \ref{Thm1.1} (I)]\rm
    Assume that $u$ is a nontrivial positive solution of inequality $\eqref{eq1}$. By Lemma $\ref{lem2.1}$, it follows that $u$ satisfies $\frac{1}{p_1} \leqslant \frac{u(x)}{u(y)} \leqslant p_1 $ for any $y\sim x$.

    For convenience, let $d(x)=d(o,x)$, $k\in \mathbb N^{+}, B_k = \{x\in V|d(x)\leqslant2^k\}$  and 
    $$
    W(B_k)=\sum\limits_{\substack{x \in B_k,y\in V \\ d(x)<d(y)}} \mu_{xy}.
    $$ 
    Fix integer $n$, define $h_n (x)$ on $V$ as
    \begin{equation}\label{eq2.24}
	h_n(x) = \left\lbrace \begin{aligned}
		 &1,    &d(x)\leqslant n, \\
		 &2-\frac{d(x)}{n}, &\quad n<d(x)<2n, \\
		 &0, &d(x)\geqslant 2n .
	\end{aligned}\right. 
    \end{equation}
    Let
    \begin{equation}\label{eq2.25}
	\varphi_i = \frac{1}{i} \sum_{k=i-1}^{2i-2} h_{2^k}(x).
    \end{equation}
    It follows from the definition that $0\leqslant \varphi_i \leqslant1$, and $\varphi_i = 1$ in $B_{i-1}$, $\varphi_i=0$ in $B_{i-1}^c$. Additionally, for any $x\in B_k - B_{k-1}$, if $k\leqslant i-2$ or $k\geqslant 2i+1$, then $\nabla_{xy}\varphi_i = 0$ for any $y\sim x$; while if $i-1\leqslant k \leqslant 2i$, we have
    $$
    |\nabla_{xy}\varphi_i| \lesssim \frac{1}{i\cdot 2^k},\quad \text{for any }y \sim x.
    $$

    Letting $\Omega=V$ and substituting $\varphi=\varphi_i$ into $\eqref{eq2.7}$,  we have
    \begin{align}
		&\sum_{x\in V} \mu(x) u(x)^{p-t} |\nabla u(x)|^q \varphi_i (x)^s \nonumber  \\
		& \lesssim (2s)^{\frac{mp+q+t(q-m)}{p+q-m+1}} t^{-\frac{p(m-1)+t(q-m+1)}{p+q-m+1}} \sum_{x,y\in V} \mu_{xy} |\nabla_{xy}\varphi_i|^{\frac{mp+q+t(q-m)}{p+q-m+1}}  \nonumber \\
		& \lesssim (2s)^{\frac{mp+q+t(q-m)}{p+q-m+1}} t^{-\frac{p(m-1)+t(q-m+1)}{p+q-m+1}} \sum_{k=i-1}^{2i} \sum_{x\in B_k - B_{k-1}} \sum_{y\sim x} \mu_{xy} |\nabla_{xy}\varphi_i|^{\frac{mp+q+t(q-m)}{p+q-m+1}}  \nonumber  \\
		& \lesssim (2s)^{\frac{mp+q+t(q-m)}{p+q-m+1}} t^{-\frac{p(m-1)+t(q-m+1)}{p+q-m+1}} \sum_{k=i-1}^{2i} W(B_k) \left( \frac{1}{i\cdot 2^k} \right) ^{\frac{mp+q+t(q-m)}{p+q-m+1}}   \nonumber\\
		& \lesssim (2s)^{\frac{mp+q+t(q-m)}{p+q-m+1}} \frac{t^{-\frac{p(m-1)+t(q-m+1)}{p+q-m+1}}}{i^{\frac{mp+q+t(q-m)}{p+q-m+1}}} \sum_{k=i-1}^{2i} W(B_k) 2^{-k\frac{mp+q+t(q-m)}{p+q-m+1}} .
    \end{align}

    Since $G_1 \subset K_2 \cup K_3,$ and by Remark $\ref{rm2.3}$, we choose
    $$
    t = \frac{1}{i} ,
    $$
    and $s$ to be some large fixed constant. From $\eqref{eq1.4}$, we derive that
    \begin{equation}
    	\begin{aligned}\label{eq2.27}
    		\sum_{x\in V}& \mu(x) u(x)^{p-\frac{1}{i}} |\nabla u(x)|^q \varphi_i (x)^s \\
    		&\lesssim i^{-1-\frac{m-1-\frac{1}{i}}{p+q-m+1}} \sum_{k=i-1}^{2i} 2^{\frac{k(m-q)}{i(p+q-m+1)}}  k^{\frac{m-1}{p+q-m+1}} \\
    		&\lesssim i^{-1+\frac{\frac{1}{i}}{p+q-m+1}} \sum_{k=i-1}^{2i} 2^{\frac{k(m-q)}{i(p+q-m+1)}}  \\
    		&\lesssim i^{\frac{\frac{1}{i}}{p+q-m+1}} .
    	\end{aligned}
    \end{equation}

    Letting $i\rightarrow \infty$ in $\eqref{eq2.27}$, we get
    $$
    \sum_{x\in V} \mu(x) u(x)^p |\nabla u(x)|^q < \infty.
    $$
    Then substituting $\varphi=\varphi_i \text{ and }t=\frac{1}{i}$ into $\eqref{eq2.6}$, we arrive
    $$
    \underset{i\rightarrow\infty}{\lim} \sum_{x\in V} \mu(x) u(x)^{p-\frac{1}{i}} |\nabla u(x)|^q \varphi_i (x)^s = 0,
    $$
    that is
    $$
    \sum_{x\in V} \mu(x) u(x)^p |\nabla u(x)|^q = 0,
    $$
    which contradicts the assumption that $u$ is nontrivial. Thus, Theorem \ref{Thm1.1} (I) is proved. 
\end{proof}

\begin{proof}[Proof of Theorem \ref{Thm1.1} (II)]
    The proof is divided into the following three cases:
    
    (II-1).$\quad (p,q)\in \{p>m-1-q,q\geqslant m\}$;

    (II-2).$\quad (p,q)\in \{p=m-1-q,q\geqslant m\}$;

    (II-3).$\quad (p,q)\in \{p<m-1-q,q\geqslant m\}$.

    In case (II-1), $(p,q)\in K_3$. Let
    $$
    t = m-1-\frac{1}{i},
    $$
    and s be some large fixed constant.

    Let $\Omega=V$ in Lemma \ref{lem2.2}, substitute $\varphi=\varphi_i$ from $\eqref{eq2.25}$ into $\eqref{eq2.7}$, and then use the same technique as in $\eqref{eq2.27}$, we derive
    \begin{equation}
    	\begin{aligned}\label{eq2.28}
    		\sum_{x\in V} &\mu(x) u(x)^{p-t} |\nabla u(x)|^q \varphi_i (x)^s  \\
    		& \lesssim (2s)^{\frac{mp+q+t(q-m)}{p+q-m+1}} \frac{t^{-\frac{p(m-1)+t(q-m+1)}{p+q-m+1}}}{i^{\frac{mp+q+t(q-m)}{p+q-m+1}}} \sum_{k=i-1}^{2i} W(B_k) 2^{-k\frac{mp+q+t(q-m)}{p+q-m+1}} .
    	\end{aligned}
    \end{equation}

    Combining $\eqref{eq1.5}$ and $\eqref{eq2.28}$, noting that $(2s)^{\frac{mp+q+t(q-m)}{p+q-m+1}} t^{-\frac{p(m-1)+t(q-m+1)}{p+q-m+1}}$ is uniformly bounded for $i$, we have
    
    \begin{equation}
    	\begin{aligned}\label{eq2.29}
    		\sum_{x\in V} &\mu(x) u(x)^{p-t} |\nabla u(x)|^q \varphi_i (x)^s \\
    		& \lesssim i^{-\frac{mp+q+t(q-m)}{p+q-m+1}} \sum_{k=i-1}^{2i} W(B_k) 2^{-k\frac{mp+q+t(q-m)}{p+q-m+1}} \\
    		& \lesssim i^{-\frac{mp+q+t(q-m)}{p+q-m+1}} \sum_{k=i-1}^{2i} 2^{k(m-\frac{mp+q+t(q-m)}{p+q-m+1})} k^{m-1} \\
    		& \lesssim i^{m-1-\frac{mp+q+t(q-m)}{p+q-m+1}}  \sum_{k=i-1}^{2i} 2^{\frac{k(q-m)}{i(p+q-m+1)}} .
    	\end{aligned}
    \end{equation}

    Substituting $t=m-1-\frac{1}{i}$ into $\eqref{eq2.29}$, we get
    \begin{equation}
	\begin{aligned}
		\sum_{x\in V} \mu(x) u(x)^{p-m+1+\frac{1}{i}} |\nabla u(x)|^q \varphi_i (x)^s 
		\lesssim i^{\frac{q-m}{i(p+q-m+1)}}.
	\end{aligned}
    \end{equation}
    Letting $i\rightarrow \infty$, we also arrive at
    $$
		\sum_{x\in V} \mu(x) u(x)^{p-m+1} |\nabla u(x)|^q  <\infty.
    $$
    Substituting $\varphi=\varphi_i$ and $t=m-1-\frac{1}{i}$ into $\eqref{eq2.6}$, and repeating the same procedures as in the proof of (I), we get
    $$
    \sum_{x\in V} \mu(x) u(x)^{p-m+1} |\nabla u(x)|^q =0,
    $$
    which yields a contradiction with the assumption of $u$.

    In case (II-2), denote $\Omega_k=\{x\in V|0<u(x)<k\}$. Since $u$ is a nontrivial positive solution, there exists a constant $k_0$ such that $\mu(\Omega_k)>0$
    for all $ k>k_0$. Now fix such $k>k_0$, and let $v=\frac{u}{k}$. Then $v$ satisfies
    $$
    \Delta_m v(x) + v(x)^p|\nabla v(x)|^q \leqslant0,\quad x\in V.
    $$
    Obviously $0<v<1$ in $\Omega_k$, we have
    \begin{equation}\label{eq2.31}
  	\Delta_m v(x) + v(x)^{p\prime}|\nabla v(x)|^q 
        \leqslant0,\quad x\in\Omega_k,
    \end{equation}
    where $p^\prime=p+\epsilon$ and $\epsilon>0$, hence $(p^\prime,q)\in \{p^\prime+q=p+q+\epsilon>m-1,q\geqslant m\}$, that is $(p^\prime,q)\in$(II-1).

    According to the definition of $v(x)$ and $\Omega_k$, it is known that $\frac{1}{p_1}\leqslant \frac{v(x)}{v(y)}=\frac{u(x)}{u(y)} \leqslant p_1$ and $v(y)-v(x)\geqslant0$ when $x\in \Omega_k,y\in \Omega_k^c$. Thus, by Lemma$\ref{lem2.2}$ and repeating the same procedure as in case (II-1) except replacing $V$ with $\Omega_k$, we obtain
    \begin{equation}\label{eq2.32}
	  \sum_{x\in\Omega_k} \mu(x) v(x)^{p\prime-m+1} 
        |\nabla v(x)|^q = 0,\quad x\in\Omega_k.
    \end{equation}
    Let $k_i=\max\{u(x)|d(o,x)\leqslant i\}+k_0$, we have $B(o,i)\subset \Omega_{k_i}$. Choosing $k=k_i$ in $\eqref{eq2.32}$, then $v\equiv const.$ in $B(o,i)$, which gives that $u\equiv const.$ in $B(o,i)$. Letting $i\rightarrow\infty$, we obtain that $u\equiv const.$ in $V$, which contradicts the assumption that $u$ is a nontrivial positive solution.

    In case (II-3), let
    $$
    t=m-1+\frac{1}{i},
    $$
    and take the same argument as in case (II-1) to complete the proof of Theorem \ref{Thm1.1} (II).   
\end{proof}

\begin{proof}[Proof of Theorem \ref{Thm1.1} (III)]
    The proof is divided into the following three cases:

    (III-1).$\quad (p,q)\in G_3\cap \{p+q>m-1\}$; 
		
	(III-2).$\quad(p,q)\in G_3\cap \{p+q=m-1\}$; 
		
	(III-1).$\quad(p,q)\in G_3\cap \{p+q<m-1\}$.

    In case (III-1), $(p,q)\in K_3$. Let
    $$
    t = \frac{p(1-m)}{q-m+1}+\frac{1}{i},
    $$
    and $s$ be some large fixed constant.
    Letting $\Omega=V,\varphi=\varphi_i$ in $\eqref{eq2.7}$ and using the same procedure as before, we obtain
    $$
    \sum_{x\in V} \mu(x) u(x)^{p-t} |\nabla u(x)|^q \varphi_i(x)^s 
    \lesssim i^{-\frac{mp+q+t(q-m)}{p+q-m+1}} \sum_{k=i-1}^{2i} W(B_k) 2^{-k\frac{mp+q+t(q-m)}{p+q-m+1}} .
    $$
    Combining with $\eqref{eq1.6}$, we have
   
   \begin{equation}
   	\begin{aligned}\label{eq2.33}
   		\sum_{x\in V} &\mu(x) u(x)^{\frac{pq}{q-m+1}-\frac{1}{i}} |\nabla u(x)|^q \varphi_i(x)^s  \\
   		&\lesssim i^{-\frac{mp+q+t(q-m)}{p+q-m+1}} \sum_{k=i-1}^{2i} 2^{k(\frac{q}{q-m+1}-\frac{mp+q+t(q-m)}{p+q-m+1})} k^{\frac{m-1}{q-m+1}} \\
   		&\lesssim i^{-\frac{mp+q+t(q-m)}{p+q-m+1}} \sum_{k=i-1}^{2i} 2^{\frac{k(m-q)}{i(p+q-m+1)}} k^{\frac{m-1}{q-m+1}} \\
   		&\lesssim i^{\frac{m-1}{q-m+1}-\frac{mp+q+t(q-m)}{p+q-m+1}+1} \\
   		&\lesssim i^{-\frac{q-m}{i(p+q-m+1)}}.
   	\end{aligned}
   \end{equation}
    
    Letting $i\rightarrow\infty$ in $\eqref{eq2.33}$, we have
    $$
    \sum_{x\in V} \mu(x) u(x)^{\frac{pq}{q-m+1}} |\nabla u(x)|^q < \infty.
    $$
    Repeating the proof procedure in Theorem \ref{Thm1.1} (I), we derive
    $$
    \sum_{x\in V} \mu(x) u(x)^{\frac{pq}{q-m+1}} |\nabla u(x)|^q = 0,
    $$
    which contradicts the assumption that $u$ is a nontrivial positive solution.

    In case (III-2), let $ 0<\epsilon<-\frac{p}{2}$ such that $(p^\prime,q)=(p+\epsilon,q)\in $(III-1), then repeat the steps in (II-2) to complete the proof for this case.

    In case (III-3), it suffices to take
    $$
    t = \frac{p(1-m)}{q-m+1}-\frac{1}{i},
    $$
    and the proof as in (III-1).

    Hence, we complete the proof of Theorem \ref{Thm1.1} (III). 
\end{proof}

\begin{proof}[Proof of Theorem \ref{Thm1.1} (IV)]
    In this case, i.e. $p<0, q=m-1$, take
    $$
    t=l(m-1)+\frac{1}{i},\qquad s=-\frac{l(m-1)}{p}+m+\frac{1}{i},
    $$
    where $l>1$ is to be chosen later.

    In Lemma \ref{lem2.2}, substitute $\Omega=V,\varphi=\varphi_i$ into $\eqref{eq2.7}$, combine with \eqref{eq1.7} and repeat the previous arguments to obtain
    \begin{equation}
	\begin{aligned}
		\sum_{x\in V} &\mu(x) u(x)^{p-t} |\nabla u(x)|^q \varphi_i(x)^s \\
		&\lesssim i^{-\frac{mp+q+t(q-m)}{p+q-m+1}} \sum_{k=i-1}^{2i} W(B_k) 2^{-k\frac{mp+q+t(q-m)}{p+q-m+1}}  \\
		&\lesssim i^{-\frac{mp+q-t}{p}} \sum_{k=i-1}^{2i} 2^{k(\alpha-\frac{mp+q-t}{p})} .
	\end{aligned}
    \end{equation}

    Letting $l$ be sufficiently large satisfying that
    $$
    \alpha-\frac{mp+q-t}{p} <0,
    $$
   for all $i$,  we obtain
    \begin{equation}\label{eq2.35}
	\sum_{x\in V} \mu(x) u(x)^{p-t} |\nabla u(x)|^q \varphi_i(x)^s \lesssim i^{1-\frac{mp+q-t}{p}} .
    \end{equation}
    Further, for large enough $l$, the following is also satisfied
    $$
    1-\frac{mp+q-t}{p} < c <0.
    $$
    In $\eqref{eq2.35}$, By letting $i\rightarrow\infty$, we obtain
    $$
    \sum_{x\in V} \mu(x) u(x)^{p-l(m-1)} |\nabla u(x)|^q =0,
    $$
    which contradicts the assumption that $u$ is a nontrivial positive solution. Thus, the Theorem \ref{Thm1.1} (IV) is proved.  
\end{proof}

\begin{proof}[Proof of Theorem \ref{Thm1.1} (V)]
    Divide the proof into two cases:

    %(V-1).$\quad (p,q)\in \{p+ q = m-1,p\geqslant0,q>0\}$; 

    %(V-2).$\quad (p,q)\in \{p+ q = m-1,p> m - 1,q<0\}$.

    From $\eqref{eq1}$, we derive that 
    $$
    \sum_{y\in V} \frac{\mu_{xy}}{\mu(x)} |u(y)-u(x)|^{m-2} (u(y)-u(x)) +u(x)^p |\nabla u(x)|^q \leqslant 0,
    $$
    which means
    \begin{equation}
    	 \begin{aligned}\label{eq2.36}
    		\sum_{y\in V} \frac{\mu_{xy}}{\mu(x)} |u(y)-u(x)|^{m-2} u(y) 
    		& - \sum_{y\in V} \frac{\mu_{xy}}{\mu(x)} |u(y)-u(x)|^{m-2} u(x)\\
    		& + u(x)^p |\nabla u(x)|^q \leq0 .
    	\end{aligned}
    \end{equation}

    Note that when $y\sim x$, 
    \begin{equation}\label{eq2.37}
 	|u(y)-u(x)| \leqslant C_{p_1} u(x),
    \end{equation}
    where $C_{p_1}=\max\{p_1 -1,1-\frac{1}{p_1}\}$.

    Substituting $\eqref{eq2.37}$ into $\eqref{eq2.36}$ and using $p+q=m-1$,  we obtain
    \begin{equation*}
	\begin{aligned}
		\sum_{y\in V} \frac{\mu_{xy}}{\mu(x)} |u(y)-u(x)|^{m-2} u(y) 
		&\leqslant C_{p_1}^{m-2} u(x)^{p+q} - u(x)^p |\nabla u(x)|^q  \\
		&= u(x)^{p+q} (C_{p_1}^{m-2}-u(x)^{-q} |\nabla u(x)|^q ),
	\end{aligned}
    \end{equation*}
    which gives
    \begin{equation}\label{eq2.38}
	 u(x)^{-q} |\nabla u(x)|^q \leqslant C,
    \end{equation}
    where $C = C_{p_1}^{m-2}$.

    In case $(p,q)\in G_{5.1}$, since $p+q=m-1$ and $q>0$, we have
    \begin{equation}\label{eq2.39}
	  |\nabla u(x)| \leqslant C_1 u(x),
    \end{equation}
    where $C_1 = C^{\frac{1}{q}} = C_{p_1}^{\frac{m-2}{q}}$. Plugging this  into $\eqref{eq1}$, we get
    \begin{equation}
 	\Delta_m u(x) +  \frac{1}{C_1} |\nabla 
        u(x)|^{m-1} \leqslant  \Delta_m u(x) + u(x)^p |\nabla u(x)|^q \leqslant 0.
    \end{equation}

    Denote $\Omega_k^{\prime} = \{x\in V|0<u(x)<k,|\nabla u(x)|\ne 0\}$. Since $u$ is a nontrivial positive solution, there exists a constant $k$ such that $\mu(\Omega_k^\prime)>0$. Let $v=\frac{u}{k}$, then $v$ satisfies
    $$
	\Delta_m v(x) + C_2 |\nabla v(x)|^{m-1} \leqslant0,
    $$
    where $C_2=\frac{1}{C_1}=C_{p_1}^{-\frac{m-2}{q}}$. Noting $0<v<1$ on $\Omega_k^\prime$ and by $\eqref{eq2.39}$, we have
    $$
    C_2 |\nabla v(x)| \leqslant v(x) \leqslant1.
    $$
    Consequently,
    \begin{equation}\label{eq2.41}
	  \Delta_m v(x) + C_2 |\nabla v(x)|^{\lambda} 
        \leqslant0,\quad x\in \Omega_k^\prime,
    \end{equation}
    where $\lambda>m-1$ is to be chosen later.

    By the definition of $\Omega_k^\prime$, it is known that when $x\sim y,y\in V\setminus{\Omega_k^\prime},x\in \Omega_k^\prime$,
    \begin{equation*}
	\left\lbrace\begin{aligned}
		 &v(y)>v(x),\text{ when } v(y)\geqslant 1,\\
		&v(y)=v(x),\text{ when } v(y)<1,\text{ and } |\nabla v(y)|=0. 
	\end{aligned}\right.
    \end{equation*}
    In both of the above cases, we arrive at $v(y)-v(x)\geqslant0$.

    For any $x\in \Omega_k^\prime$, we have
    \begin{equation}\label{eq2.42}
	\begin{aligned}
		\Delta_m v(x) 
		&= \sum_{y\in V} \frac{\mu_{xy}}{\mu(x)} |v(y)-v(x)|^{m-1} \sgn(\nabla_{xy}v)   \\
		&= \sum_{y\in \Omega_k^\prime} \frac{\mu_{xy}}{\mu(x)} |v(y)-v(x)|^{m-1} \sgn(\nabla_{xy}v) + \sum_{y\in V\setminus{\Omega_k^\prime}} \frac{\mu_{xy}}{\mu(x)} (\nabla_{xy}v)^{m-1}   \\
		&\geqslant \sum_{y\in \Omega_k^\prime} \frac{\mu_{xy}}{\mu(x)} |v(y)-v(x)|^{m-1} \sgn(\nabla_{xy}v) .
	\end{aligned}
    \end{equation}
    Similarly,
    \begin{equation}
    	\begin{aligned}\label{eq2.43}
    		|\nabla v(x)| 
    		&= \sqrt{\sum_{y\in V} \frac{\mu_{xy}}{2\mu(x)} (\nabla_{xy}v)^2}  \\
    		&= \sqrt{\sum_{y\in \Omega_k^\prime} \frac{\mu_{xy}}{2\mu(x)} (\nabla_{xy}v)^2 + \sum_{y\in (\Omega_k^\prime)^c}\frac{\mu_{xy}}{2\mu(x)} (\nabla_{xy}v)^2}  \\
    		& \geqslant \sqrt{\sum_{y\in \Omega_k^{\prime}} \frac{\mu_{xy}}{2\mu(x)} (\nabla_{xy}v)^2} \eqqcolon |\nabla_{\Omega_k^\prime} v(x)|.
    	\end{aligned}
    \end{equation}
 
   Here we remark  that $|\nabla_{\Omega_k^\prime} v(x)|$ is not the norm of gradient of $u$ in $\Omega_k^{\prime}$.

    Substituting $\eqref{eq2.43}$ and $\eqref{eq2.42}$ into $\eqref{eq2.41}$, we have
    \begin{equation}\label{eq2.44}
	   \sum_{y\in \Omega_k^\prime} \frac{\mu_{xy}} 
          {\mu(x)} |v(y)-v(x)|^{m-1} \sgn(\nabla_{xy}v) + C_2 |\nabla_{\Omega_k^{\prime}} v(x)|^{\lambda}
    	\leqslant 0,\quad x\in \Omega_k^{\prime} .
    \end{equation}
    Multiplying both sides of $\eqref{eq2.44}$ by $\mu(x) h_n^z$, where $h_n^z$ is the same as in $\eqref{eq2.24}$, and then summing  over $x\in\Omega_k^\prime$, we get
    \begin{equation}\label{eq2.45}
	\begin{aligned}
		C_2\sum_{x\in\Omega_k^\prime} \mu(x) |\nabla_{\Omega_k^\prime} v(x)|^{\lambda} h_n(x)^z 
		&\leqslant -\sum_{x,y\in\Omega_k^\prime} \mu_{xy} h_n(x)^z |\nabla_{xy}v|^{m-1} \sgn(\nabla_{xy}v) \\
		&\leqslant \frac{1}{2} \sum_{x,y\in\Omega_k^\prime} \mu_{xy} (\nabla_{xy}h_n^z) |\nabla_{xy}v|^{m-1}   \\
		&= \frac{z}{2} \sum_{x,y\in\Omega_k^\prime} \mu_{xy} \eta^{z-1} (\nabla_{xy}h_n) |\nabla_{xy}v|^{m-1},
	\end{aligned}
    \end{equation}
    where $\eta>0$ takes values between $h_n(x)$ and $h_n(y)$.

    Noting that $|\nabla_{\Omega_k^\prime} v(x)|^2=\sum\limits_{y\in \Omega_k^\prime} \frac{\mu_{xy}}{2\mu(x)} (\nabla_{xy}v)^2$ and $\frac{1}{2p_0}\leqslant \frac{\mu_{xy}}{2\mu(x)} \leqslant\frac{1}{2}$, we have
    \begin{equation}\label{eq2.46}
	|\nabla_{xy}v| \leqslant \sqrt{2p_0} |\nabla_{\Omega_k^\prime} v(x)|,\text{ for any } y\sim x .
    \end{equation}

    Substituting \eqref{eq2.46} and $\eta^{z-1}\leqslant \max(h_n(x)^{z-1},h_n(y)^{z-1}) \leqslant h_n(x)^{z-1}+h_n(y)^{z-1}$ into $\eqref{eq2.45}$, and applying $\mathrm{H\ddot{o}lder}$'s inequality, we obtain
	\begin{align*}
		C_2\sum_{x\in\Omega_k^\prime} &\mu(x) |\nabla_{\Omega_k^\prime} v(x)|^{\lambda} h_n(x)^z \nonumber\\
		& \leqslant \frac{z}{2} \sum_{x,y\in\Omega_k^\prime} \mu_{xy} (h_n(x)^{z-1}+h_n(y)^{z-1}) |\nabla_{xy}h_n| |\nabla_{xy}v|^{m-1} \nonumber\\
		& = z \sum_{x,y\in\Omega_k^\prime} \mu_{xy} h_n(x)^{z-1} |\nabla_{xy}h_n| |\nabla_{xy}v|^{m-1}\nonumber \\
		& \leqslant (2p_0)^{\frac{m-1}{2}}  z \sum_{x,y\in\Omega_k^\prime} \mu_{xy} h_n(x)^{z-1} |\nabla_{xy}h_n| |\nabla_{\Omega_k^\prime} v(x)|^{m-1}   \nonumber\\
		& \leqslant (2p_0)^{\frac{m-1}{2}} z \left( \sum_{x,y\in\Omega_k^\prime} \mu_{xy} |\nabla_{\Omega_k^\prime} v(x)|^{\lambda} h_n(x)^z\right) ^{\frac{m-1}{\lambda}}
		\left( \sum_{x,y\in\Omega_k^\prime} \mu_{xy} |\nabla_{xy}h_n|^z\right) ^{\frac{\lambda-m+1}{\lambda}} ,\nonumber
	\end{align*}
    where we take
    $$
    z=\frac{\lambda}{\lambda-m+1}.
    $$

 The fact that $\sum\limits_{x\in\Omega_k^\prime} \mu(x) |\nabla_{\Omega_k^\prime} v(x)|^{\lambda} h_n(x)^z $ is bounded,  $h_n=1$ in $B(o,n)$, and  $\eqref{eq1.8}$ imply
    
	\begin{align}\label{eq2.48}
		C_2 \sum_{x\in\Omega_k^\prime\cap B(o,n)} \mu(x) |\nabla_{\Omega_k ^\prime} v(x)|^{\lambda} \nonumber&\leqslant \left( (2p_0)^{\frac{m-1}{2}} z\right) ^z \sum_{x,y\in\Omega_k^\prime} \mu_{xy} |\nabla_{xy}h_n|^z \nonumber\\
		&\leqslant \left( \frac{(2p_0)^{\frac{m-1}{2}} z}{n}\right) ^z W(B(o,2n)) \\
		& \lesssim \left( \frac{(2p_0)^{\frac{m-1}{2}} z}{n}\right) ^z e^{2\kappa n} .
	\end{align}

    Here, we have  used $|\nabla_{xy}h_n|\leqslant \frac{1}{n}$ and $|\nabla_{xy}h_n|=0$ for any $x,y\in B(o,n)^c$. Let
    \begin{equation}\label{eq2.49}
	 z=\theta n,
    \end{equation}
    where the positive constant $\theta$ will be determined later. It is easy to see that $\lambda\rightarrow (m-1)_+$ is equivalent to $n\rightarrow \infty$.

    Letting $\lambda\rightarrow(m-1)_+$ in \eqref{eq2.48}, and from \eqref{eq2.49}, we obtain
    \begin{equation}
    	\begin{aligned}\label{eq2.50}
    		\sum_{x\in\Omega_k^\prime} \mu(x) |\nabla_{\Omega_k^\prime}v(x)|^{m-1} 
    		&\leqslant \lim_{\lambda\rightarrow (m-1)_+} \sum_{x\in\Omega_k^\prime} \mu(x) |\nabla_{\Omega_k ^\prime}v(x)|^{\lambda}  \\
    		& \lesssim \lim_{n\rightarrow\infty} \left( \frac{(2p_0)^{\frac{m-1}{2}} z}{n}\right) ^z e^{2\kappa n} \\
    		& = \lim_{n\rightarrow\infty} e^{n(2\kappa+\theta \ln((2p_0)^{\frac{m-1}{2}}\theta))}.
    	\end{aligned}
    \end{equation}

    Since $g(\theta) = \theta \ln((2p_0)^{\frac{m-1}{2}}\theta)$ attains its maximum at $\theta=\frac{1}{(2p_0)^{\frac{m-1}{2}}e}$, if $\kappa$ satisfies 
    $$
      0<\kappa<\kappa_0=\frac{1}{2(2p_0)^{\frac{m-1}{2}}e},
    $$
    there  exists $\theta>0$ such that 
    \begin{equation*}
	  2\kappa+\theta \ln\left( (2p_0)^{\frac{m-1} 
        {2}}\theta\right)  < 0.
    \end{equation*}

    Under the above choice of $\kappa$ and $\theta$, it follow from \eqref{eq2.50} that 
    \begin{equation*}
	  \sum_{x\in\Omega_k^\prime} \mu(x) 
        |\nabla_{\Omega_k^\prime}v(x)|^{m-1} =0.
    \end{equation*}
    Therefore, when $x\in \Omega_k^\prime$, $|\nabla_{\Omega_k^\prime}v(x)|=0$. 
    
    We  next claim that $|\nabla v(x)|=0$ for any $x\in \Omega_k^\prime$ by contradiction argument.  Assume, on the contrary, that  there exists some $x_0\in \Omega_k^\prime$ such that $|\nabla_{\Omega_k^\prime}v(x_0)|=0$ but $|\nabla v(x_0)|\ne 0$. Set $U=\{y|y\sim x_0 \text{ and }u(y)\ne u(x_0)\}$. Clearly, $U\subset (\Omega_k^\prime)^c$. By the definition of $\Omega_k^\prime$, it is known that $v(y)>v(x_o)$ when $y\in(\Omega_k^\prime)^c,x_0\in\Omega_k^\prime,y\sim x_0$ and $v(y)\ne v(x_0)$.

   Noticing  that $u(y)=u(x_0)$ for any $y\in U^c$, we have
    \begin{equation*}
	\begin{aligned}
		\Delta_m v(x_o) 
		&= \sum_{y\in V} \frac{\mu_{x_0y}}{\mu(x_0)} |v(y)-v(x_0)|^{m-1} \sgn(\nabla_{x_0y}v)  \\
		&= \sum_{y\in U} \frac{\mu_{x_0y}}{\mu(x_0)} |v(y)-v(x_0)|^{m-1} \sgn(\nabla_{x_0y}v)  \\
		&\qquad +  \sum_{y\in U^c} \frac{\mu_{x_0y}}{\mu(x_0)} |v(y)-v(x_0)|^{m-1} \sgn(\nabla_{x_0y}v) \\
		&=\sum_{y\in U} \frac{\mu_{x_0y}}{\mu(x_0)} |v(y)-v(x_0)|^{m-1} \sgn(\nabla_{x_0y}v) >0,
	\end{aligned}
    \end{equation*}
    which contradicts with \eqref{eq2.41}.

    In case $(p,q)\in G_{5.2}$ with $1<m<3$, since $p+q=m-1,q<0$, it results from \eqref{eq2.38} that 
    \begin{equation}\label{eq2.51}
	 u(x)\leqslant C_3|\nabla u(x)|,
    \end{equation}
    where $C_3=C^{-\frac{1}{q}}=C_{p_1}^{\frac{2-m}{q}}$.

    Set $\Omega_k = \{x\in V|0<u(x)<k\}$. Since $u$ is a nontrivial positive solution, there exists $k_0>0$ such that $\mu(\Omega_k)>0$ for any $k>k_0$. Fix $k$, let $v=\frac{u}{k}$, then $v$ satisfies
    $$
    \Delta_m v(x)+v(x)^p |\nabla v(x)|^q \leqslant 0,\quad \forall x\in V.
    $$
    It is easy to see that $0<v<1$ in $\Omega_k$. Consequently,
    $$
    \Delta_m v(x)+v(x)^{p+\epsilon} |\nabla v(x)|^q \leqslant 0,\quad \forall x\in \Omega_k,
    $$
    where the constant $\epsilon$ is to be determined.

    Since $(p^{\prime},q)\coloneqq(p+\epsilon,q)\in K_2,p^{\prime}+q=m-1+\epsilon$, we choose $0<t<m-1,s=\frac{mp+q+t(q-m)+m\epsilon}{\epsilon}+1$, and let
    $$
    z=\frac{mp+q+t(q-m)+m\epsilon}{\epsilon}.
    $$
    It is easy to see that $z\rightarrow\infty$ is equivalent to $\epsilon\rightarrow0$.

    Note that $\frac{1}{p_1}\leqslant\frac{v(x)}{v(y)}\leqslant p_1$ and $v(y)-v(x)\geqslant0$ when $x\in \Omega_k$, $y\in \Omega_k^c$. Let $\Omega=\Omega_k,\varphi=h_n$ in Lemma \ref{lem2.2}, then we have
	\begin{align*}
		\sum_{x\in\Omega_k} &\mu(x) v(x)^{p-t+\epsilon} |\nabla v(x)|^q h_n(x)^s \nonumber\\
		&\lesssim \left( C_{p_1,t}\right) ^{\frac{p+q-t+\epsilon}{\epsilon}} (2s)^{\frac{mp^{\prime}+q+t(q-m)}{p^{\prime}+q-m+1}} t^{-\frac{p^{\prime}(m-1)+t(q-m+1)}{p^{\prime}+q-m+1}} \sum_{x,y\in \Omega_k} \mu_{xy} |\nabla_{xy} h_n|^{\frac{mp^{\prime}+q+t(q-m)}{p^{\prime}+q-m+1}} \nonumber\\
		&= \left( C_{p_1,t}\right) ^{\frac{p+q-t+\epsilon}{\epsilon}} (2s)^{\frac{mp+q+t(q-m)+m\epsilon}{\epsilon}} t^{-\frac{p(m-1)+t(q-m+1)+(m-1)\epsilon}{\epsilon}} \sum_{x,y\in \Omega_k} \mu_{xy} |\nabla_{xy} h_n|^{\frac{mp+q+t(q-m)+m\epsilon}{\epsilon}}  \\
		&\leqslant (2(C_{p_1,t}^{\prime}) (z+1))^z (\frac{1}{t})^z \sum_{x,y\in \Omega_k} \mu_{xy} |\nabla_{xy} h_n|^z,\nonumber
	\end{align*}
    where \begin{align*}
    	C_{p_1,t} &= \frac{(m-1)p+t(q-m+1)+(m-1)\epsilon}{p+q-t+\epsilon}\\
    	&\times \frac{\sqrt{2p_1}(1+p_1^t)(\sqrt{2p_1}(1+p_1^t)p_1^{t+1})^{\frac{(m-1)p+t(q-m+1)+(m-1)\epsilon}{p+q-t+\epsilon}}}{2^{1+\frac{(m-1)p+t(q-m+1)+(m-1)\epsilon}{p+q-t+\epsilon}}}
    \end{align*}    and $C_{p_1,t}^{\prime} = \left( C_{p_1,t}\right) ^{\frac{p+q-t+\epsilon}{mp+q+t(q-m)+m\epsilon}}.$

   Letting  $\epsilon\rightarrow0$, one gets
    \begin{equation*}
	\begin{aligned}
		C_{p_1,t}\rightarrow C^{\prime} 
		&= \frac{(m-1)p+t(q-m+1)}{p+q-t} \frac{\sqrt{2p_1}(1+p_1^t)(\sqrt{2p_1}(1+p_1^t)p_1^{t+1})^{\frac{(m-1)p+t(q-m+1)}{p+q-t}}}{2^{1+\frac{(m-1)p+t(q-m+1)}{p+q-t}}} \\
		&= \frac{p[\sqrt{2p_1}(1+p_1^t)]^{p+1} p_1^{(t+1)p}}{2^{1+p}}.
	\end{aligned}
    \end{equation*}
Since $h_n=1$ in $B(o,n)$, we have
\begin{equation}
	\begin{aligned}\label{eq2.53}
		\sum_{x\in\Omega_k\cap B(o,n)} \mu(x) v(x)^{p-t+\epsilon} |\nabla v(x)|^q &\lesssim \left( \frac{2C^{\prime}(z+1)}{t}\right) ^z \sum_{x,y\in \Omega_k} \mu_{xy} |\nabla_{xy} h_n|^z \\
		&\lesssim \left( \frac{2C^{\prime}(z+1)}{tn}\right) ^z W(B(o,n)) \\
		&\lesssim \left( \frac{2C^{\prime}(z+1)}{tn}\right) ^z e^{2\kappa n}. 
	\end{aligned}
\end{equation}

    Let
    $$
    z=\theta n,
    $$
    where the positive constant $\theta$ is to be determined later.

    Letting $n\rightarrow\infty$ in \eqref{eq2.53},  we obtain
   
	\begin{align}\label{eq2.54}
		\sum_{x\in\Omega_k\cap B(o,n)} \mu(x) v(x)^{p-t} |\nabla v(x)|^q
		&\lesssim \lim_{n\rightarrow\infty} \left( \frac{2C^{\prime}(\theta n+1)}{tn}\right) ^{\theta n} e^{2\kappa n} \nonumber\\
		&\lesssim \lim_{n\rightarrow\infty} e^{n(2\kappa+\theta \ln(\frac{2C^{\prime}\theta}{t}))}.
	\end{align}
   
    Similar to the first case, if $\kappa$ satisfies
    $$
    0<\kappa<\kappa_{0,t} = \frac{t}{4C^{\prime} e},
    $$
    there  exists $\theta>0$ such that $2\kappa+\theta \ln(\frac{2C\prime\theta}{t})<0$. Note that as long as $t\in (o,m-1)$ is chosen, $\kappa,\theta$ can be selected to satisfy the above conditions.

    From \eqref{eq2.54}, we arrive at
    $$
    \sum_{x\in\Omega_k} \mu(x) v(x)^{p-t} |\nabla v(x)|^q = 0
    $$
   From this and \eqref{eq2.51}, we get a contradiction. This completes the proof of Theorem $\ref{Thm1.1}$ (V).
\end{proof}

\section{Proof of Theorem \ref{Thm1.4}}\label{sec Thm2.4} \rm

We are ready to give  the proof of Theorem \ref{Thm1.4}. 
\begin{proof}
We complete the proof by using contradiction argument. Assume that $u$ is a nontrivial positive solution of $\eqref{eq1}$. Fix a vertex $x\in V$ and let us introduce two disjoint vertices sets $S_+(x)$
and $S_-(x)$ as follows
\begin{align*}
S_+(x) = \{y \sim x \mid u(y) < u(x)\},\quad
S_-(x) = \{y \sim x \mid u(y) > u(x)\}.
\end{align*}
Denote
$$\mu_+ = \sum_{y \in S_+} \mu_{xy},\quad \mu_- = \sum_{y \in S_-} \mu_{xy}$$
It follows that $\mu(x) \geq \mu_+ + \mu_-$.

Note
\begin{equation}\label{eq2}
	\begin{aligned}
		-\Delta_m u(x)
		&=\frac{1}{\mu (x)} \sum_{y\sim x}\mu_{xy} |u(y)-u(x)|^{m-2}[u(y)-u(x)]\\
		&=\frac{\sum_{y\in S_+} \mu_{xy}(u(x)-u(y))^{m-1}-\sum_{y\in S_-} \mu_{xy}(u(y)-u(x))^{m-1}}{\mu (x)} \\
		&\leq \frac{\mu_+ A_+ ^{m-1}- \mu_- A_- ^{m-1}}{\mu_+ + \mu_-},
	\end{aligned}
\end{equation}
where
\begin{align*}
	A_+ = [\frac{1}{\mu_+}\sum_{y\in S_+} \mu_{xy}(u(x)-u(y))^{m-1}]^{\frac{1}{m-1}}, A_- = [\frac{1}{\mu_-}\sum_{y\in S_-} \mu_{xy}(u(y)-u(x))^{m-1}]^{\frac{1}{m-1}}.
\end{align*}
%Since $m \ge 3$, the exponent $\frac{m-1}{2} \ge 1$. The function $f(t) = t^{(m-1)/2}$ is strictly convex for $t \ge 0$. 
By using Jensen's Inequality when $m\geq 3$, we have
\begin{equation*}
	\begin{aligned}
		|\nabla u(x)|^2
		&=\sum_{y\sim x}{\frac{\mu _{xy}}{2\mu (x)}(u(y)-u(x))^2} \\
		&= \frac{\sum_{y\in S_+} \mu_{xy}(u(x)-u(y))^2+\sum_{y\in S_-} \mu_{xy}(u(y)-u(x))^2}{2\mu (x)} \\
		&\leq \frac{\mu_+ A_+ ^2+ \mu_- A_- ^2}{2(\mu_+ + \mu_-)}.
	\end{aligned}
\end{equation*}

Since $q<0$, we have 
\begin{equation}\label{eq3}
	|\nabla u(x)|^q\geq \left(\frac{\mu_+ A_+ ^2+ \mu_- A_- ^2}{2(\mu_+ + \mu_-)}\right)^\frac{q}{2} .
\end{equation}

We next consider the case where $\mu_->0$. Substituting  $\eqref{eq2}$ and $\eqref{eq3}$ into $\eqref{eq1}$, it is necessary to obtain
\begin{align}\label{eq4}
	\underbrace{\frac{K-R^{m-1}}{1+K}}_{\text{Term 1}}\cdot \underbrace{\left(\frac{K+R^2}{2(1+K)}\right)^{-\frac{q}{2}}}_{\text{Term 2}} \cdot \underbrace{(\frac{A_+}{u})^p}_{\text{Term 3}} \geq 1,
\end{align}
where $K=\frac{\mu_+}{\mu_-}, R=\frac{A_-}{A_+}$.

Now we give an estimate the three terms on the left-hand side of $\eqref{eq4}$:
\begin{enumerate}
	\item For Term 1: To ensure $-\Delta_m u(x) > 0$, we must have $R^{m-1} < K$. Therefore, $\frac{K - R^{m-1}}{K+1} < \frac{K}{K+1} < 1$.
	\item For Term 2: Given $R^{m-1} < K$ and $m \ge 3$:
	\begin{itemize}
		\item If $R \ge 1$, then $R^2 \le R^{m-1} < K$, and hence $\frac{K+R^2}{2(K+1)} < \frac{2K}{2K+2} < 1$.
		\item If $R < 1$, then $R^2 < 1$, and hence $\frac{K+R^2}{2(K+1)} < \frac{K+1}{2K+2} = \frac{1}{2} < 1$.
	\end{itemize}
	\item For Term 3: For any $y \in S_+(x)$, since $u(y) > 0$, we have $ u(x) - u(y) < u(x)$. Thus, $A_+ = [\frac{1}{\mu_+}\sum_{y\in S_+} \mu_{xy}(u(x)-u(y))^{m-1}]^{\frac{1}{m-1}}<[\frac{1}{\mu_+}\sum_{y\in S_+} \mu_{xy}u(x)^{m-1}]^{\frac{1}{m-1}} =u(x)$, which implies $\frac{A_+}{u(x)} < 1$. 
\end{enumerate}
Hence, the left hand side of \eqref{eq4} is smaller than one and we obtain a contradiction.

We now consider $\mu_-=0$. Then, instead of \eqref{eq4}, we get 
$$\left(\frac{1}{2}\right)^{\frac{q}{2}}\leq \left(\frac{A_+}{u(x)}\right)^p.$$
As above, we also have $\frac{A_+}{u(x)} < 1$. In addition, since $q<0$, it follows that $\left(\frac{1}{2}\right)^{\frac{q}{2}}>1$. Thus we again obtain  a contradiction. The proof is completed. 
\end{proof}

\section{Proof of Theorem \ref{Thm1.2}}\label{sec-thm2}
 In this section, we give the proof of Theorem \ref{Thm1.2}. Recall that we only consider $q\not=0$ and $(p,q)\in G_6$.

	Assume that $u$ is a non-trivial positive solution of \eqref{eq1}.  Since $u$ is non-trivial, there is $x_0$ such that $|\nabla u(x_0)|>0.$ It follows that $-\Delta_{m}u(x_0)\geq u^p(x_0)|\nabla u(x_0)|^q>0$ or  $\Delta_{m}u(x_0)<0$. Let $x_1$ be such that $$u(x_1)=\min_{x\sim x_0}u(x).$$
	Then we also have 
	$$|\nabla u(x_1)|>0, \Delta_{m}u(x_1)<0 \mbox{ and }u(x_1)\leq u(x_0).$$
	By induction, there is a sequence $(x_n)$ such that  $u(x_{n+1})=\min_{x\sim x_n}u(x)$ and 
	$$|\nabla u(x_n)|>0, \Delta_{m}u(x_n)<0 \mbox{ and }u(x_{n+1})\leq u(x_n)\mbox{ for all }n\geq 0.$$
	This implies that the sequence $(u(x_n))$ is decreasing and positive. Hence 
	\begin{equation}\label{emqt81}
		\lim_{n\to+\infty}u(x_n)\geq 0
	\end{equation}
	and 
	\begin{align}\label{emqt82}
		\begin{split}
			0>\Delta_{m} u(x_n)&=\sum_{y\sim x_n}\frac{\mu_{x_ny}}{\mu(x_n)}|u(y)-u(x_n)|^{m-2}(u(y)-u(x_n))\\
			&\geq |u(x_{n+1})-u(x_n)|^{m-2}(u(x_{n+1})-u(x_n))\to 0\mbox{ as }n\to\infty.
		\end{split}
	\end{align}
	Consequently, $\Delta_{m} u(x_n)\to 0$ as $n\to\infty$. Since $u$ is a positive solution of \eqref{eq1}, we deduce that 
	\begin{equation}\label{emqt91}
		u^p(x_n)|\nabla u(x_n)|^q\to 0\mbox{ as }n\to\infty.
	\end{equation}
	We next consider two possible cases of $(p,q)$.
	
	 Case 1: $q<0, p<m-1-q$

    By the  condition \eqref{p_0} and \eqref{emqt82}, we obtain, for such vertex $y^*$ satisfying $y^*\sim x_n$ and $u(y^*)\geq u(x_n)$, that
	\begin{align*}%\label{emqt82}
		\begin{split}
			\frac{1}{p_0}(u(y^*)-u(x_n))^{m-1}	&\leq \sum_{\substack{y\sim x_n\\u(y)\geq u(x_n)}}\frac{\mu_{x_ny}}{\mu(x_n)}|u(y)-u(x_n)|^{m-2}(u(y)-u(x_n))\\
			&\leq \sum_{\substack{y\sim x_n\\ u(y)< u(x_n)}}\frac{\mu_{x_ny}}{\mu(x_n)}|u(y)-u(x_n)|^{m-2}(u(x_n)-u(y))\\
			&\leq (u(x_n)-u(x_{n+1}))^{m-1},
		\end{split}
	\end{align*}
	which means that
	\begin{equation}
		u(y^*)-u(x_n)\leq p_0^{\frac{1}{m-1}}(u(x_n)-u(x_{n+1})).
	\end{equation}
	Hence, 
	\begin{align*}
		2|\nabla u(x_n)|^2&=\sum_{\substack{y \sim x_n\\ u(y)\geq  u(x_n)}}\frac{\mu_{x_ny}}{\mu(x_n)}(u(y)-u(x_n))^2\\
		&+\sum_{\substack{y \sim x_n\\ u(y)< u(x_n)} }\frac{\mu_{x_ny}}{\mu(x_n)}(u(x_n)-u(y))^2\\
		&\leq (p_0^\frac{2}{m-1}+1)(u(x_n)-u(x_{n+1}))^2.
	\end{align*}
    Therefore, we obtain 
    \begin{equation}\label{eq4.2}
    	\lim_{n\rightarrow\infty} |\nabla 
    	u(x_n)| = 0.
    \end{equation}
    From \eqref{eq4.2} and \eqref{emqt91}, we deduce that $p>0$ and
    \begin{equation}\label{eq4.3}
    	\lim_{n\rightarrow \infty} u(x_n)=0.
    \end{equation}
	On the other hand, using \eqref{emqt82}, we get 
	\begin{equation}\label{emqt94}
		-\Delta_{m} u(x_n)\leq (u(x_n)-u(x_{n+1}))^{m-1}
	\end{equation}
	Substituting these estimates into \eqref{eq1} and noticing that $q<0$, we obtain 
	\begin{align*}
		u(x_n)^{p+q}&\leq u(x_n)^p(u(x_n)-u(x_{n+1}))^q\leq C u(x_n)^p|\nabla u(x_n)|^q\\
	&\leq -C\Delta_{m} u(x_n)\leq C(u(x_n)-u(x_{n+1}))^{m-1}\leq Cu(x_n)^{m-1}
	\end{align*}
	which gives 
	\begin{equation}\label{emqt93}
		u(x_n)^{p+q-m+1}\leq C,
	\end{equation}
	where $C$ depends only on $p_0$ and $q$. From the assumption $p+q-m+1<0$ and \eqref{eq4.3}, we deduce that $u(x_n)^{p+q-m+1}\to\infty$ as $n\to\infty$, which contradicts with \eqref{emqt93}.

	 Case 2: $0<q<m-1$ and $p<m-1-q$

	By the condition \eqref{p_0}, we have 
	$$2|\nabla u(x_n)|^2\geq \frac{1}{p_0}(u(x_n)-u(x_{n+1}))^2.$$
	In  this case, we still have  \eqref{emqt94} which comes from \eqref{emqt82}. Combining these estimates and \eqref{eq1}, we arrive at
	\begin{align*}
		u(x_n)^p(u(x_n)-u(x_{n+1}))^q&\leq Cu(x_n)^p|\nabla u(x_n)|^q\\
		&\leq -C\Delta_{m}u(x_n)\\
		&\leq C(u(x_n)-u(x_{n+1}))^{m-1}.
	\end{align*}
	Consequently,
	\begin{equation}\label{emqt95}
		1\leq Cu(x_n)^{-p}(u(x_n)-u(x_{n+1}))^{m-1-q},
	\end{equation}
	where $C$ only depends on $p_0$ and $q$. From \eqref{emqt95} and the fact that $u(x_n)-u(x_{n+1})\to 0$ as $n\to\infty$, we deduce that $u(x_n)\to 0$ as $n\to\infty$ and $p>0$. It implies that $u(x_n)^{-p}\leq (u(x_n)-u(x_{n+1}))^{-p}$. Combining this and \eqref{emqt95}, we obtain 
	$$1\le C(u(x_n)-u(x_{n+1}))^{m-1-q-p}.$$
	Recall that $m-1-q-p>0$. Letting $n\to\infty$ in this inequality, we get  a contradiction. The proof is finished.\qed

\section{Proof of Theorem \ref{Thm1.3}}\label{sec-thm3}

Before proving Theorem \ref{Thm1.3}, we first introduce some notations. For any fixed $n\geqslant0$, let $D_n=\{ x\in T_N : d(o,x)=n\}$, and let $E_n$ denote the set of all the edges from vertices in $D_n$ to vertices in $D_{n+1}$.

\begin{proof}[Proof of Theorem \ref{Thm1.3} (I)]
    When $(p,q)\in G_1$, define 
    \begin{equation}
    \begin{aligned}
        &\mu_{xy}=\mu_n=\frac{(n+n_0)^{\frac{(m-1)(p+1)}{p+q-m+1}}(\ln(n+n_0))^{\frac{m-1}{p+q-m+1}+\epsilon}}{(N-1)^n},\mbox{ for }(x,y)\in E_n, n\geq 0, 
    \end{aligned}\label{eq3.1}
    \end{equation}
and 
    \begin{equation}\label{eq3.2}
        u(x)=u_n=\frac{\delta}{(n+n_0)^\frac{m-q}{p+q-m+1}(\ln(n+n_0))^\frac{1}{p+q-m+1}},\mbox{ for } x\in D_n, n\geq 0,
    \end{equation}
    where $n_0\geqslant2$ and $\delta>0$ are to be determined later.

    For convenience,  let us denote
    \[
    \lambda=\frac{(m-1)(p+1)}{p+q-m+1},\text{ }\beta=\frac{1}{p+q-m+1},\text{ } \sigma=\frac{m-q}{p+q-m+1},\text{ } \beta^{\prime}=(m-1)\beta.
    \]

    From \eqref{eq3.1}, we have
    \[
    W_o(n)=\sum_{k=0}^{n} \mu (D_k)\asymp \sum_{k=0}^{n}(N-1)^k \mu_k \asymp n^{\frac{mp+q}{p+q-m+1}}(\ln n)^{\frac{m-1}{p+q-m+1}+\epsilon},
    \]
     namely \eqref{eq1.9} is satisfied.

    We next prove that  \eqref{eq1} holds, which means that the following two inequalities hold:
    \begin{equation}\label{eq3.3}
        -(u_0-u_1)^{m-1}+u_0^p\left[\frac{(u_0-u_1)^2}{2}\right]^{\frac{q}{2}}\leqslant0,
    \end{equation}
    and for all $n\geq 1$
    \begin{equation}
    	\begin{aligned}\label{eq3.4}
    		&\frac{-(N-1)\mu_n(u_n-u_{n+1})^{m-1}+\mu_{n-1}(u_{n-1}-u_n)^{m-1}}{(N-1)\mu_n+\mu_{n-1}} \\
    		& \quad + u_n^p\Big[\frac{(N-1)\mu_n(u_n-u_{n+1})^2+\mu_{n-1}(u_{n-1}-u_n)^2}{2(N-1)\mu_n+2\mu_{n-1}}\Big]^\frac{q}{2}\leqslant0.
    	\end{aligned} 
    \end{equation} 
 We first prove \eqref{eq3.3}. Plugging \eqref{eq3.1} and \eqref{eq3.2} into \eqref{eq3.3}, we have
    \begin{align*}
          -\Big(\frac{\delta}{n_0^\sigma(\ln n_0)^\beta}-\frac{\delta}{(n_0+1)^\sigma(\ln (n_0+1))^\beta}\Big)^{m-1} + \Big(\frac{\delta}{n_0^\sigma(\ln n_0)^\beta}\Big)^p \\
          \times \Big[\frac{1}{2}(\frac{\delta}{n_0^\sigma(\ln n_0)^\beta}-\frac{\delta}{(n_0+1)^\sigma(\ln (n_0+1))^\beta})\Big]^\frac{q}{2} \leqslant 0.
    \end{align*}
    The above inequality holds if we take $\delta\leqslant\delta_0$, where 
    \begin{align*}
         \delta_0=&2^\frac{q}{2(p+q-m+1)}
         (n_0^\sigma(\ln n_0)^\beta)^\frac{p}{p+q-m+1}  \left(\frac{1}{n_0^\sigma(\ln n_0)^\beta}-\frac{1}{(n_0+1)^\sigma(\ln (n_0+1))^\beta}\right)^\frac{m-1-q}{p+q-m+1} .
    \end{align*}

 We now prove \eqref{eq3.4}. Substituting \eqref{eq3.1} and \eqref{eq3.2} into \eqref{eq3.4}, we get
    \begin{align*}
        &\frac{-(n+n_0)^\lambda(\ln (n+n_0))^{\beta^{\prime}+\epsilon}  }{(n+n_0)^\lambda(\ln (n+n_0))^{\beta^{\prime}+\epsilon}+(n+n_0-1)^\lambda(\ln (n+n_0-1))^{\beta^{\prime}+\epsilon}} \\
        &\times \Big(\frac{\delta}{(n+n_0)^\sigma(\ln (n+n_0))^\beta}-\frac{\delta}{(n+n_0+1)^\sigma(\ln (n+n_0+1))^\beta}\Big)^{m-1}\\
        &+ \frac{(n+n_0-1)^\lambda(\ln (n+n_0-1))^{\beta^{\prime}+\epsilon}}{(n+n_0)^\lambda(\ln (n+n_0))^{\beta^{\prime}+\epsilon}+(n+n_0-1)^\lambda(\ln (n+n_0-1))^{\beta^{\prime}+\epsilon}}\\
        &\times \Big(\frac{\delta}{(n+n_0-1)^\sigma(\ln (n+n_0-1))^\beta}-\frac{\delta}{(n+n_0)^\sigma(\ln (n+n_0))^\beta}\Big)^{m-1}\\
       &+ \Big(\frac{\delta}{(n+n_0)^\sigma(\ln (n+n_0))^\beta}\Big)^p \\
       &\times \Bigg[ \frac{-(n+n_0)^\lambda(\ln (n+n_0))^{\beta^{\prime}+\epsilon}\Big(\frac{\delta}{(n+n_0)^\sigma(\ln (n+n_0))^\beta}-\frac{\delta}{(n+n_0+1)^\sigma(\ln (n+n_0+1))^\beta}\Big)^{2} }{2(n+n_0)^\lambda(\ln (n+n_0))^{\beta^{\prime}+\epsilon}+2(n+n_0-1)^\lambda(\ln (n+n_0-1))^{\beta^{\prime}+\epsilon}} \\
        &+ \frac{(n+n_0-1)^\lambda(\ln (n+n_0-1))^{\beta^{\prime}+\epsilon}}{2(n+n_0)^\lambda(\ln (n+n_0))^{\beta^{\prime}+\epsilon}+2(n+n_0-1)^\lambda(\ln (n+n_0-1))^{\beta^{\prime}+\epsilon}}\\
        &\times \Big(\frac{\delta}{(n+n_0-1)^\sigma(\ln (n+n_0-1))^\beta}-\frac{\delta}{(n+n_0)^\sigma(\ln (n+n_0))^\beta}\Big)^{2}\Bigg]^\frac{q}{2}  \leqslant0.
    \end{align*}
    The above inequality is equivalent to    \begin{equation}\label{eq3.5}
        \delta^{p+q-m+1}\leqslant\Lambda_1(n+n_0),\;n\geq 1
    \end{equation}
    where
    \begin{equation}
    	\begin{aligned}\label{eq3.6}
    		\Lambda_1(n)&\coloneqq  2^{\frac{q}{2}} (n^\sigma(\ln n)^\beta)^{p-m+1} \Bigg(\Big[1-(1-\frac{1}{n+1})^\sigma(\frac{\ln n}{\ln (n+1)})^\beta \Big]^{m-1}\\
    		&-\Big[(1-\frac{1}{n})(\frac{\ln (n-1)}{\ln n})^{ \frac{\epsilon}{m-1}}-(1-\frac{1}{n})^{\frac{\lambda}{m-1}}(\frac{\ln (n-1)}{\ln n})^{\beta+{ \frac{\epsilon}{m-1}}}\Big]^{m-1}\Bigg)\\
    		&\times\Big[1+(1-\frac{1}{n})^\lambda(\frac{\ln (n-1)}{\ln n})^{\beta^\prime+\epsilon}\Big]^{-1}\Bigg[\frac{n^\lambda(\ln n)^{\beta^\prime+\epsilon}(\frac{1}{n^\sigma(\ln n)^\beta} - \frac{1}{(n+1)^\sigma(\ln (n+1)^\beta})^2 }{n^\lambda(\ln n)^{\beta^\prime+\epsilon}+(n-1)^\lambda(\ln (n-1))^{\beta^\prime+\epsilon}}\\
    		&+ \frac{ (n-1)^\lambda(\ln (n-1))^{\beta^\prime+\epsilon}(\frac{1}{(n-1)^\sigma(\ln (n-1))^\beta}-\frac{1}{n^\sigma(\ln n)^\beta})^2}{n^\lambda(\ln n)^{\beta^\prime+\epsilon}+(n-1)^\lambda(\ln (n-1))^{\beta^\prime+\epsilon}}\Bigg]^{-\frac{q}{2}}.
    	\end{aligned}
    \end{equation}
    For convenience, set
        \begin{align}\label{eq3.7}
           A_1(n)=&\Bigg[\frac{n^\lambda(\ln n)^{\beta^\prime+\epsilon}(\frac{1}{n^\sigma(\ln n)^\beta} - \frac{1}{(n+1)^\sigma(\ln (n+1)^\beta})^2 }{n^\lambda(\ln n)^{\beta^\prime+\epsilon}+(n-1)^\lambda(\ln (n-1))^{\beta^\prime+\epsilon}}\nonumber\\
            &+ \frac{ (n-1)^\lambda(\ln (n-1))^{\beta^\prime+\epsilon}(\frac{1}{(n-1)^\sigma(\ln (n-1))^\beta}-\frac{1}{n^\sigma(\ln n)^\beta})^2}{n^\lambda(\ln n)^{\beta^\prime+\epsilon}+(n-1)^\lambda(\ln (n-1))^{\beta^\prime+\epsilon}}\Bigg]^{-\frac{q}{2}}.
        \end{align}
 We  claim that
    \begin{equation}\label{eq3.8}
        \lim_{n\rightarrow\infty} \Lambda_1(n) =  2^{\frac{q}{2}-1}\sigma^{m-1-q}\epsilon.
    \end{equation}
   Indeed, by using  Taylor's expansion:
    \[
    \frac{\ln (n-1)}{\ln n} = 1-\frac{1}{n\ln n}-\frac{1}{2n^2\ln n}+o(\frac{1}{2n^2\ln n})
    \]
    and
    \[
    (1-\frac{1}{n})^\alpha=1-\frac{\alpha}{n}+\frac{\alpha(\alpha-1)}{2n^2}+o(\frac{1}{n^2}),
    \]
    we obtain
    \begin{equation}
    	\begin{aligned}\label{eq3.9}
    		A_1(n)= &\left(1+(1-\frac{1}{n})^\lambda(\frac{\ln (n-1)}{\ln n})^{\beta+\epsilon}\right)^{\frac{q}{2}}\Bigg[\left(\frac{1}{n^\sigma(\ln n)^\beta} - \frac{1}{(n+1)^\sigma\ln (n+1)^\beta}\right)^2\\
    		&+(1-\frac{1}{n})^\lambda(\frac{\ln (n-1)}{\ln n})^{\beta+\epsilon}\left(\frac{1}{(n-1)^\sigma(\ln (n-1))^\beta} - \frac{1}{n^\sigma(\ln n)^\beta}\right)^2\Bigg]^{-\frac{q}{2}}.
    	\end{aligned}
    \end{equation}
    Note that
    \begin{equation}
    	\begin{aligned}\label{eq3.10}
    		\frac{1}{n^\sigma(\ln n)^\beta} -& \frac{1}{(n+1)^\sigma\ln (n+1)^\beta}         
    		= \frac{1}{n^\sigma(\ln n)^\beta} \left( 1-(1-\frac{1}{n+1})^\sigma(\frac{\ln n}{\ln (n+1)})^\beta \right) \\
    		&=\frac{1}{n^\sigma(\ln n)^\beta} \left( \frac{\sigma}{n+1} + \frac{\beta}{(n+1)\ln (n+1)} + o(\frac{1}{n \ln n}) \right) \\
    		& =\frac{1}{n^\sigma(\ln n)^\beta} \left( \frac{\sigma}{n}+o(\frac{1}{n}) \right)
    	\end{aligned}
    \end{equation}
and 
    \begin{equation}\label{eq3.11}
        \frac{1}{(n-1)^\sigma\ln (n-1)^\beta} - \frac{1}{n^\sigma(\ln n)^\beta} = \frac{1}{n^\sigma(\ln n)^\beta} \left( \frac{\sigma}{n}+o(\frac{1}{n}) \right).
    \end{equation}
    Substituting \eqref{eq3.11} and \eqref{eq3.10} into \eqref{eq3.9}, we obtain
    \begin{equation}\label{eq3.12}
        A_1(n) =\left( \frac{n^{\sigma+1}(\ln n)^\beta}{\sigma} \right)^q(1+o(1)).
    \end{equation}
   It follows from  \eqref{eq3.12} and  \eqref{eq3.6} that
    \begin{equation*}
        \begin{aligned}
            \Lambda_1(n)
            & =(\frac{2^\frac{1}{2}}{\sigma})^q n^q (n^\sigma(\ln n)^\beta)^{p+q-m+1}\left(1+(1-\frac{1}{n})^\lambda(\frac{\ln (n-1)}{\ln n})^{\beta^\prime+\epsilon}\right)^{-1}\\
            &\times\Bigg( [1-(1-\frac{1}{n+1})^\sigma(\frac{\ln n}{\ln (n+1)})^\beta]^{m-1} \\
            &- [(1-\frac{1}{n})(\frac{\ln (n-1)}{\ln n})^{ \frac{\epsilon}{m-1}}
             - (1-\frac{1}{n})^\frac{\lambda}{m-1}(\frac{\ln (n-1)}{\ln n})^{\beta+{ \frac{\epsilon}{m-1}}}]^{m-1} \Bigg) \\
            &=(\frac{2^\frac{1}{2}}{\sigma})^q n^m \ln n \left(1+(1-\frac{1}{n})^\lambda(\frac{\ln (n-1)}{\ln n})^{\beta^\prime+\epsilon}\right)^{-1}\\
            &\times\Bigg([1-(1-\frac{1}{n+1})^\sigma(\frac{\ln n}{\ln (n+1)})^\beta]^{m-1}\\
            & - [(1-\frac{1}{n})(\frac{\ln (n-1)}{\ln n})^{ \frac{\epsilon}{m-1}} - (1-\frac{1}{n})^\frac{\lambda}{m-1}(\frac{\ln (n-1)}{\ln n})^{\beta+{ \frac{\epsilon}{m-1}}}]^{m-1} \Bigg),
        \end{aligned}
    \end{equation*}
   Here, recall that
    $$\lambda=\frac{(m-1)(p+1)}{p+q-m+1},\beta=\frac{1}{p+q-m+1},$$
    and $$ \sigma=\frac{m-q}{p+q-m+1}=\frac{\lambda}{m-1}-1=\lambda'-1.$$

    By applying the Taylor expansion, we have
        \begin{align}\label{eq3.13}
              \Lambda_1(n)&=\frac{2^{\frac{q}{2}-1}}{\sigma^q} n^m \ln n \nonumber\\
            & \times \bigg\{\Big[\frac{\sigma}{n+1}+\frac{\beta}{(n+1)\ln (n+1)}-\frac{\sigma(\sigma-1)}{2(n+1)^2}+\frac{\frac{\beta}{2}-\beta\sigma}{(n+1)^2\ln(n+1)}+o(\frac{1}{n^2 \ln n})\Big]^{m-1} \nonumber\\
            &- \Big[\frac{\sigma}{n}+\frac{\beta}{n\ln n}-\frac{\sigma(\sigma+1)}{2n^2}+\frac{(\frac{1}{2}-\lambda')\beta-\sigma{ \frac{\epsilon}{m-1}}}{n^2 \ln n} +o(\frac{1}{n^2 \ln n})\Big]^{m-1}   \bigg\}.  
        \end{align}
    Using Taylor's expansion again, we obtain
    \begin{equation}
    	 \begin{aligned}\label{eq3.14}
    		\bigg[&\frac{\sigma}{n+1}+\frac{\beta}{(n+1)\ln (n+1)}-\frac{\sigma(\sigma-1)}{2(n+1)^2}+\frac{\frac{\beta}{2}-\beta\sigma}{(n+1)^2\ln(n+1)}+o(\frac{1}{n^2 \ln n})\bigg]^{m-1} \\
    		&= \frac{\sigma^{m-1}}{(n+1)^{m-1}} \bigg[1+\frac{\beta}{\sigma\ln (n+1)}-\frac{\sigma-1}{2(n+1)}-\frac{\frac{\beta}{2\sigma}-\beta}{(n+1)\ln(n+1)}+o(\frac{1}{n\ln n})\Big]^{m-1}\\
    		&= \frac{\sigma^{m-1}}{(n+1)^{m-1}} \bigg[ 1+ \sum_{k=1}^{\infty} \frac{(m-1)(m-2)\cdots(m-k)}{k!} \\
    		&\times \Big(\frac{\beta}{\sigma\ln (n+1)}-\frac{\sigma-1}{2(n+1)}
    		-\frac{\frac{\beta}{2\sigma}-\beta}{(n+1)\ln(n+1)}+o(\frac{1}{n\ln n})\Big)^k \bigg] \\
    		&= \frac{\sigma^{m-1}}{(n+1)^{m-1}} \Big[ 1+(m-1)\Big(\frac{\beta}{\sigma\ln (n+1)}-\frac{\sigma-1}{2(n+1)}-\frac{\frac{\beta}{2\sigma}-\beta}{(n+1)\ln(n+1)}\Big)  \\
    		&\qquad -\frac{(m-1)(m-2)}{2} \frac{\beta(\sigma-1)}{\sigma(n+1)\ln (n+1)} \\
    		&\qquad + \sum_{k=2}^{\infty} \frac{(m-1)(m-2)\cdots(m-k)}{k!} \Big(\frac{\beta}{\sigma\ln (n+1)}\Big)^k \Big] + o(\frac{1}{n^m\ln n}).
    	\end{aligned}
    \end{equation}
    Similarly
    \begin{equation}\label{eq3.15}
        \begin{aligned}
            \Big[\frac{\sigma}{n}+&\frac{\beta}{n\ln n}-\frac{\sigma(\sigma+1)}{2n^2}+\frac{(\frac{1}{2}-\lambda')\beta-\sigma{ \frac{\epsilon}{m-1}}}{n^2 \ln n} +o(\frac{1}{n^2 \ln n})\Big]^{m-1} \\
            &=\frac{\sigma^{m-1}}{n^{m-1}} \Big[1+\frac{\beta}{\sigma \ln n}-\frac{\sigma+1}{2n}+\frac{(\frac{1}{2}-\lambda')\frac{\beta}{\sigma}-{ \frac{\epsilon}{m-1}}}{n\ln n} \Big]^{m-1}\\
            &=\frac{\sigma^{m-1}}{n^{m-1}} \Big[ 1+(m-1)(\frac{\beta}{\sigma \ln n}-\frac{\sigma+1}{2n}+\frac{(\frac{1}{2}-\lambda')\frac{\beta}{\sigma}-{ \frac{\epsilon}{m-1}}}{n\ln n}) \\
            & \qquad- \frac{(m-1)(m-2)}{2}\frac{\beta(\sigma+1)}{\sigma n \ln n} \\
            &\qquad+ \sum_{k=2}^{\infty} \frac{(m-1)(m-2)\cdots(m-k)}{k!} \Big(\frac{\beta}{\sigma\ln n}\Big)^k   \Big]+ o(\frac{1}{n^m\ln n}).
        \end{aligned}
    \end{equation}
    Substituting \eqref{eq3.14} and \eqref{eq3.15} into \eqref{eq3.13}, we have
    \[
    \lim_{n\rightarrow\infty} \Lambda_1(n) =  2^{\frac{q}{2}-1}\sigma^{m-1-q}\epsilon.
    \]
    Thus, we obtain \eqref{eq3.8}. This means that there exists a sufficiently large $n_0$ such that for all $n\geqslant n_0$, the RHS of \eqref{eq3.5} is bounded from  below by $\frac{2^{\frac{q}{2}-1}\sigma^{m-1-q}\epsilon}{2}$. 
    
Hence, taking  $\delta=\min\{\delta_0,\delta_1\}$, where $ \delta_1=\Big(\frac{2^{\frac{q}{2}-1}\sigma^{m-1-q}\epsilon}{2}\Big)^\frac{1}{p+q-m
    +1}$, we deduce that \eqref{eq3.5} is true.               
\end{proof}

\begin{proof}[Proof of Theorem \ref{Thm1.3} (II)]
    In the case where $(p, q) \in G_2$, we construct 
   $$ \mu_{x y}= \mu_n=\frac{\left(n+n_0\right)^{m-1}\left(\ln \left(n+n_0\right)\right)^{m-1+\epsilon}}{(N-1)^n},\mbox{ for any }(x, y) \in E_n, n \geq 0,$$
   and 
   $$
   u(x)=u_n=\frac{1}{\left(\ln \left(n+n_0\right)\right)^{\frac{\epsilon}{2}}}+1,\mbox{ for any }x \in D_n, n \geq 0 .
   $$
   Then for $n \geq 2$, it is not difficult to verify that
   $$
   W_o(n) \asymp n^m(\ln n)^{m-1+\epsilon},
   $$
   which satisfies \eqref{eq1.10}.
   
   We next show that $u$ is a positive solution of \eqref{eq1}. 
   
   For $n=0$, we need to prove that 
   $$\begin{aligned}  -\left(\frac{1}{\left(\ln n_0\right)^{\frac{\epsilon}{2}}}-\frac{1}{\left(\ln \left(n_0+1\right)\right)^{\frac{\epsilon}{2}}}\right)^{m-1}&+\left(\frac{1}{\left(\ln n_0\right)^{\frac{\epsilon}{2}}}+1\right)^p \\ & \times\left[\frac{1}{2}\left(\frac{1}{\left(\ln n_0\right)^{\frac{\epsilon}{2}}}-\frac{1}{\left(\ln \left(n_0+1\right)\right)^{\frac{\epsilon}{2}}}\right)^2\right]^{\frac{q}{2}} \leq 0\end{aligned}$$,
   equivalently, 
   $$\begin{aligned} & \left(\frac{1}{\left(\ln n_0\right)^{\frac{\epsilon}{2}}}-\frac{1}{\left(\ln \left(n_0+1\right)\right)^{\frac{\epsilon}{2}}}\right)^{m-1}\\
   	&\times\Bigg[-1+\left(\frac{1}{\left(\ln n_0\right)^{\frac{\epsilon}{2}}}+1\right)^p 
   	\frac{1}{2^\frac{q}{2}}\left(\frac{1}{\left(\ln n_0\right)^{\frac{\epsilon}{2}}}-\frac{1}{\left(\ln \left(n_0+1\right)\right)^{\frac{\epsilon}{2}}}\right)^{q-m}\Bigg] \leq 0\end{aligned}$$
   With $n_0$ large enough and $q>m$, we see that 
   $$-1+\left(\frac{1}{\left(\ln n_0\right)^{\frac{\epsilon}{2}}}+1\right)^p 
   \frac{1}{2^\frac{q}{2}}\left(\frac{1}{\left(\ln n_0\right)^{\frac{\epsilon}{2}}}-\frac{1}{\left(\ln \left(n_0+1\right)\right)^{\frac{\epsilon}{2}}}\right)^{q-m}\leq 0.$$
   Thus when $n=0$, \eqref{eq1} is true.
   
   For $n\geq 1$, \eqref{eq3.4} becomes
   \begin{align} \label{emqt14}
   	\begin{split}
   		& \left((n+n_0)^{m-1}(\ln(n+n_0))^{m-1+\varepsilon}
   		+(n+n_0-1)^{m-1}(\ln(n+n_0-1))^{m-1+\varepsilon}\right)\\
   		&\times\left(\frac{1}{\left(\ln \left(n+n_0\right)\right)^{\frac{\epsilon}{2}}}+1\right)^p \\ & \times\left(\frac{\left(n+n_0\right)^{m-1}\left(\ln \left(n+n_0\right)\right)^{m-1+\epsilon}\left(\frac{1}{\left(\ln \left(n+n_0\right)\right)^{\frac{\epsilon}{2}}}-\frac{1}{\left(\ln \left(n+n_0+1\right)\right)^{\frac{\epsilon}{2}}}\right)^2}{2\left(n+n_0\right)^{m-1}\left(\ln \left(n+n_0\right)\right)^{m-1+\epsilon}+2\left(n+n_0-1\right)^{m-1}\left(\ln \left(n+n_0-1\right)\right)^{m-1+\epsilon}}\right. \\ & \left.+\frac{\left(n+n_0-1\right)^{m-1}\left(\ln \left(n+n_0-1\right)\right)^{m-1+\epsilon}\left(\frac{1}{\left(\ln \left(n+n_0-1\right)\right)^{\frac{\epsilon}{2}}}-\frac{1}{\left(\ln \left(n+n_0\right)\right)^{\frac{\epsilon}{2}}}\right)^2}{2\left(n+n_0\right)^{m-1}\left(\ln \left(n+n_0\right)\right)^{m-1+\epsilon}+2\left(n+n_0-1\right)^{m-1}\left(\ln \left(n+n_0-1\right)\right)^{m-1+\epsilon}}\right)^{\frac{q}{2}}\\
   		&\leq (n+n_0)^{m-1}(\ln(n+n_0))^{m-1+\epsilon}
   		\left(
   		\frac{1}{(\ln(n+n_0))^{\frac{\epsilon}{2}}}
   		-
   		\frac{1}{(\ln(n+n_0+1))^{\frac{\epsilon}{2}}}
   		\right)^{m-1}
   		\\
   		&
   		-(n+n_0-1)^{m-1}(\ln(n+n_0-1))^{m-1+\epsilon}
   		\left(
   		\frac{1}{(\ln(n+n_0-1))^{\frac{\epsilon}{2}}}
   		-
   		\frac{1}{(\ln(n+n_0))^{\frac{\epsilon}{2}}}
   		\right)^{m-1}.		
   	\end{split}
   \end{align}
   By using the expansions as in the first case, we obtain that \eqref{emqt14} is equivalent to 
   \begin{equation}\label{emqt21}
   	A_2(n+n_0)\leq B_2(n+n_0),
   \end{equation}
   or 
   \begin{equation}
   	1\leq \Lambda_2(n+n_0):=A_2^{-1}(n+n_0)B_2(n+n_0),
   \end{equation}
   where for $n$ large
   $$A_2(n)=\frac{\epsilon^q}{8^\frac{q}{2}}n^{m-1-q}(\ln n)^{m-1-q-\frac{\epsilon q}{2}+\epsilon}(2+o(1))$$
   and 
   $$B_2(n)=\frac{\epsilon^{m-1}(m-1)}{2^{m-1}}n^{-1}(\ln n)^{\epsilon-\frac{(m-1)\epsilon}{2}}(1+o(1)).$$
   Note that for $n$ large, we have
   \begin{align*}
   	\Lambda_2(n)=\frac{\epsilon^{m-1}(m-1)}{2^{m-1}}\frac{8^\frac{q}{2}}{\epsilon^q}n^{q-m}(\ln n)^{q-(m-1)+\frac{\epsilon}{2}(q-(m-1))}\frac{1+o(1)}{2+o(1)}>1,
   \end{align*}
   where we have used $q\geq m$ on $G_2$. Hence, \eqref{emqt14} is true for $n_0$ large and $n\geq 1$. We finish the proof.
\end{proof}

\begin{proof}[Proof of Theorem \ref{Thm1.3} (III)]
     When $(p,q)\in G_3$, we construct adge weight and positive solution of the form
     \begin{equation}\label{eq3.20}
         \mu_{xy}=\mu_n=\frac{(n+n_0)^\frac{m-1}{q-m+1}(\ln (n+n_0))^{\frac{m-1}{q-m+1}+\epsilon}}{(N-1)^n}, \text{ for any }(x,y)\in E_n,\text{ } n\geqslant0,
     \end{equation}

     \begin{equation}\label{eq3.21}
         u(x)=u_n=\frac{\delta}{(n+n_0)^\frac{m-q}{q-m+1}(\ln (n+n_0))^\frac{1}{q-m+1}}+1,\text{ for any }x\in D_n,\text{ } n\geqslant0,
     \end{equation}
     where $n_0\geqslant2 $ and $0<\delta<1$ are to be determined.  Under the above choice of $\mu$ that \eqref{eq1.11} holds. For brevity, we denote $\sigma=\frac{m-q}{q-m+1}$, $\beta=\frac{1}{q-m+1}$, $\beta'=(m-1)\beta$.

     Now let us consider the cases of $n=0$ and $n\geqslant1$.

   For $n=0$, substituting \eqref{eq3.20} and \eqref{eq3.21} into \eqref{eq3.3}, we have
     \begin{equation*}
         \begin{aligned}
             -\Big(\frac{\delta}{n_0^\sigma(\ln n_0)^\beta}-\frac{\delta}{(n_0+1)^\sigma(\ln (n_0+1))^\beta} \Big)^{m-1} + \Big( \frac{\delta}{n_0^\sigma(\ln n_0)^\beta} +1 \Big)^p  \\
             \times \Big[ \frac{1}{2}  \Big(\frac{\delta}{n_0^\sigma(\ln n_0)^\beta}-\frac{\delta}{(n_0+1)^\sigma(\ln (n_0+1))^\beta} \Big)^2\Big] \leqslant0.
         \end{aligned}
     \end{equation*}
Since $p<0$ and $q>m-1$, such $\delta$ can be obtained by choosing $\delta\leqslant\delta_0$, where \[\delta_0=2^\frac{q}{2(q-m+1)} \Big(\frac{1}{n_0^\sigma(\ln n_0)^\beta}+1\Big)^\frac{-p}{q-m+1} \Big(\frac{1}{n_0^\sigma(\ln n_0)^\beta} - \frac{1}{(n_0+1)^\sigma(\ln (n_0+1))^\beta} \Big)^{-1} .\]

For $n\geq 1$, plugging \eqref{eq3.20} and \eqref{eq3.21} into \eqref{eq3.4}, then \eqref{eq3.4} is equivalent to
     \begin{equation}\label{eq3.22}
         \delta^{q-m+1} \leqslant \Lambda_3(n+n_0) ,
     \end{equation}
     where
     \begin{equation*}
         \begin{aligned}
             &\Lambda_3(n) \coloneqq \Big(\frac{1}{n_0^\sigma(\ln n_0)^\beta}+1\Big)^{-p} 2^\frac{q}{2} \times\\
             & \Big[ \frac{\big( \frac{1}{n^\sigma(\ln n)^\beta}-\frac{1}{(n+1)^\sigma(\ln (n+1))^\beta}\big)^{m-1}-  (1-\frac{1}{n})^{\beta'} (\frac{\ln (n-1)}{\ln n})^{\beta'+\epsilon} \big( \frac{1}{(n-1)^\sigma(\ln (n-1))^\beta}-\frac{1}{n^\sigma(\ln n)^\beta}\big)^{m-1}}{1+ (1-\frac{1}{n})^{\beta'} (\frac{\ln (n-1)}{\ln n})^{\beta'+\epsilon}} \Big]  \\
             &\times\bigg[\frac{\big( \frac{1}{n^\sigma(\ln n)^\beta}-\frac{1}{(n+1)^\sigma(\ln (n+1))^\beta}\big)^2-(1-\frac{1}{n})^{\beta'} (\frac{\ln (n-1)}{\ln n})^{\beta'+\epsilon} \big( \frac{1}{(n-1)^\sigma(\ln (n-1))^\beta}-\frac{1}{n^\sigma(\ln n)^\beta}\big)^2}{1+ (1-\frac{1}{n})^{\beta'} (\frac{\ln (n-1)}{\ln n})^{\beta'+\epsilon}} \bigg]^{-\frac{q}{2}}
         \end{aligned}
     \end{equation*}

     Applying the same computation as used in Theorem \ref{Thm1.3} (I), we obtain
     \[ \lim_{n\rightarrow\infty} \Lambda_3(n) =   2^\frac{q-2}{2} \sigma^{m-1-q} \epsilon.\]
     This means that there exists some large $n_0$ such that for all $n\geqslant n_0$, the RHS of \eqref{eq3.22} is bounded  from below by $\frac{2^\frac{q-2}{2} \sigma^{m-1-q} \epsilon}{2}$. Consequently, for $n_0$ is sufficiently large and $\delta=\min\{\delta_0,\delta_1\}$, where $ \delta_1=\Big(\frac{ 2^\frac{q-2}{2} \sigma^{m-1-q} \epsilon}{2}\Big)^\frac{1}{q-m
    +1}$, \eqref{eq3.22}  is true.    The proof is completed.
\end{proof}

\begin{proof}[Proof of Theorem \ref{Thm1.3} (IV)]
    When $(p,q)\in G_4$, given $\lambda>0$, we construct
    \begin{equation}\label{eq3.23}
        \mu_{xy}=\mu_n=\frac{\lambda e^{\lambda(n+n_0)}}{(N-1)^n},\text{ for any }(x,y)\in E_n,\text{ } n\geqslant0,
    \end{equation}

    \begin{equation}\label{eq3.24}
        u(x)=u_n=\frac{\delta}{n+n_0} + \delta,\text{ for any }x\in D_n,\text{ } n\geqslant0,
    \end{equation}
    where $n_0\geqslant2$ and $\delta>0$ are to be determined later. It is  not hard to verify that \eqref{eq1.12} holds.

  For $n=0$, we need to prove that 
    \begin{equation*}
        -\big(\frac{\delta}{n_0}-\frac{\delta}{n_0+1} \big)^{m-1} + \big(\frac{\delta}{n_0}+\delta \big)^p \Big(\frac{1}{2} \big(\frac{\delta}{n_0}-\frac{\delta}{n_0+1} \big)^2 \Big)^\frac{m-1}{2} \leqslant0.
    \end{equation*}
    Since $p<0$, we can choose $\delta$ large enough to ensure the above inequality hold, namely, letting  $\delta>\delta_0$, where 
    \[\delta_0=\Big(\frac{1}{n_0}+1\Big)^{-1} \Big(\frac{1}{2}\Big)^\frac{1-m}{2p}.\]

For $n\geq1$, we need to show that 
    \begin{equation*}
        \begin{aligned}
            \frac{-e^{\lambda(n+n_0)}\Big(\frac{\delta}{n+n_0}-\frac{\delta}{n+n_0+1}\Big)^{m-1} + e^{\lambda(n+n_0-1)}\Big(\frac{\delta}{n+n_0-1}-\frac{\delta}{n+n_0}\Big)^{m-1}}{e^{\lambda(n+n_0)}+e^{\lambda(n+n_0-1)}} + \Big(\frac{\delta}{n+n_0}+\delta \Big)^p\\
             \times \Big[  \frac{e^{\lambda(n+n_0)}\Big(\frac{\delta}{n+n_0}-\frac{\delta}{n+n_0+1}\Big)^2 + e^{\lambda(n+n_0-1)}\Big(\frac{\delta}{n+n_0-1}-\frac{\delta}{n+n_0}\Big)^2}{e^{\lambda(n+n_0)}+e^{\lambda(n+n_0-1)}}\Big]^\frac{m-1}{2} \leqslant0,
        \end{aligned}
    \end{equation*}
    equivalently
    \begin{equation}\label{eq3.25}
        \delta^p\leqslant\Lambda_4(n+n_0),
    \end{equation}
    where
    \begin{equation*}
        \begin{aligned}
            \Lambda_4(n)\coloneqq 
            \frac{e^{\lambda n} \big(\frac{1}{n}-\frac{1}{n+1}\big)^{m-1} - e^{\lambda (n-1)} \big(\frac{1}{n-1}-\frac{1}{n}\big)^{m-1}}{e^{\lambda n}+e^{\lambda (n-1)}} \times \big(\frac{1}{n}+1\big)^{-p} \times 2^\frac{m-1}{2} \\
            \times \Big( \frac{e^{\lambda n} \big(\frac{1}{n}-\frac{1}{n+1}\big)^2 - e^{\lambda (n-1)} \big(\frac{1}{n-1}-\frac{1}{n}\big)^2}{e^{\lambda n}+e^{\lambda (n-1)}} \Big) ^{-\frac{m-1}{2}} .
        \end{aligned}
    \end{equation*}
    Since \[ \lim_{n\rightarrow\infty} \Lambda_4(n) = 2^\frac{m-1}{2}\frac{1-e^{-\lambda}}{1+e^{-\lambda}}, \]
    It follows that there exists some large $n_0$ such that for all $n\geqslant0$, the RHS of \eqref{eq3.25} is bounded from below by $\frac{2^\frac{m-1}{2}\frac{1-e^{-\lambda}}{1+e^{-\lambda}}}{2}$. Taking $\delta=\min \{\delta_0,\delta_1\}$,  where $\delta_1=\Big[\frac{2^\frac{m-1}{2}\frac{1-e^{-\lambda}}{1+e^{-\lambda}}}{2} \Big]^\frac{1}{p}$, \eqref{eq3.25} is true. The proof is completed.
\end{proof}

\begin{proof}[Proof of Theorem \ref{Thm1.3} (V)]
    Consider the following two cases:
\begin{enumerate}
\item[(V-1).]{$(p,q)\in G_{5.1};$}
\item[(V-2).]{$(p,q)\in G_{5.2}$ with $1<m<3.$}
\end{enumerate}
    In the case (V-1), we take $\mu$ and $u$ as
    \begin{equation}\label{eq3.26}
        \mu_{xy}=\mu_n=\frac{\lambda e^{\lambda n}}{(N-1)^n}, \text{ for any }(x,y)\in E_n,\text{ } n\geqslant0,
    \end{equation}

    \begin{equation}\label{eq3.27}
        u(x) = u_n =e^{-a\lambda n},\text{ for any }x\in D_n,\text{ } n\geqslant0,
    \end{equation}
    where $a=\min\{\frac{1}{4},\frac{1}{m}\}$ and $\lambda$ is to be determined. Under the above choice of $\mu$, we obtain that \eqref{eq1.13} holds. We next prove that $u$ is a nontrivial positive solution of \eqref{eq1}.

  For $n=0$, we need to show that
    \begin{equation}\label{eq3.28}
        1\leqslant 2^\frac{q}{2} (1-e^{-a\lambda})^{m-1-q}.
    \end{equation}
Since $q>0$ and $m-1\geq q$, we choose $\lambda$ large enough such that \eqref{eq3.28} is true.

For $n\geq 1$, we need to prove that
    \begin{equation}\label{eq3.29}
        1\leqslant 2^\frac{q}{2} (1-e^{-a\lambda})^{m-1-q} \bigg( \frac{1+e^{-\lambda}}{1+e^{(2a-1)\lambda}} \bigg)^\frac{q}{2} \bigg(\frac{1-e^{((m-1)a-1)\lambda}}{1+e^{-\lambda}} \bigg).
    \end{equation}
    Since $a=\min\{\frac{1}{4},\frac{1}{m}\}$, we have $2a-1<0$ and $(m-1)a-1<0$. Thus, \eqref{eq3.29} is also true when $\lambda$ is  large enough.

    In case (V-2), fix a vertex $o\in T_N$ and choose a special vertex $p\sim o$. Define 
    \[ P \coloneqq \{ x\in T_N| \text{ }o \text{ is not in the path between }x \text{ and }p\},\]
    \[ D'_{-n} \coloneqq \{ x\in P|\text{ }d(o,x)=n\} \text{ for any } n\geqslant0,\]
    \[D'_n\coloneqq\{x\in  T_N\setminus P|\text{ }d(o,x)=n\} \text{ for any } n\geqslant0 .\]
  Denote $E'_n$ as the set of all the edges from vertices in $D'_n$ to vertices in $D'_{n+1}$ for $n\geqslant0$.

    In this case, we take $\mu$ and $u$ as 
    \begin{equation}\label{eq3.30}
        \mu_{xy}=\mu_n=\frac{\lambda e^{\lambda n}}{(N-1)^n} \text{ for any }(x,y)\in E'_n, \text{ }n\geqslant0,
    \end{equation}

    \begin{equation}\label{eq3.31}
        \mu_{xy}=\mu_n=\frac{\lambda e^{\lambda n}}{(N-1)^{-n-1}} \text{ for any }(x,y)\in E'_n, \text{ }n\leqslant-1,
    \end{equation}

    \begin{equation}\label{eq3.32}
        u(x)=u_n=e^{-a(\lambda-1)n}\text{ for any }(x,y)\in D'_n,\text{ }n\in \mathbb Z,
    \end{equation}
    where $\frac{1}{2} <a< \frac{1}{m-1}$. Remark that this choice of $a$ is possible because $m-1<2$. Noting that $D_n=D'_{-n}\cup D'_n$, we have
    \[ W_o(n)=\sum_{k=0}^{n} \mu(D_k) \asymp \sum_{k=0}^{n}(N-1)^k (\mu_k+\mu_{-k}) \asymp e^{\lambda n},\] where $\lambda$ is to be determined.

    Next, we need to prove that, with the chosen $\mu$ and $u$, \eqref{eq1} holds. This is equivalent to verify that the following inequalities hold:
    
    \begin{equation}
    	\begin{aligned}\label{eq3.33}
    		&\frac{-(N-1)\mu_n(u_n-u_{n+1})^{m-1} + \mu_{n-1}(u_{n-1}-u_n)^{m-1}}{(N-1)\mu_n+\mu_{n-1}} \\
    		&\qquad+ u_n^p \bigg[\frac{(N-1)\mu_n(u_n-u_{n+1})^2 + \mu_{n-1}(u_{n-1}-u_n)^2}{2(N-1)\mu_n+2\mu_{n-1}}\bigg]^\frac{q}{2} \leqslant 0,
    	\end{aligned}
    \end{equation}
    
   for any $n\geqslant0$, and 
    
    \begin{equation}
    	\begin{aligned}\label{eq3.34}
    		&\frac{-\mu_n (u_n-u_{n+1})^{m-1} + (N-1)\mu_{n-1}(u_{n-1}-u_n)^{m-1}}{\mu_n+(N-1)\mu_{n-1}} \\
    		&\qquad+ u_n^p \bigg[\frac{\mu_n(u_n-u_{n+1})^2 + (N-1)\mu_{n-1}(u_{n-1}-u_n)^2}{2\mu_n+2(N-1)\mu_{n-1}}\bigg]^\frac{q}{2} \leqslant 0.
    	\end{aligned}
    \end{equation}
for any $n\leqslant-1$.

    From \eqref{eq3.30}-\eqref{eq3.32}, we deduce that \eqref{eq3.33} and \eqref{eq3.34} are equivalent to
    \begin{equation*}
        \begin{aligned}
            &\frac{-e^{\lambda n}\big(e^{-a(\lambda-1)n}-e^{-a(\lambda-1)(n+1) }\big)^{m-1} + e^{\lambda (n-1)}\big(e^{-a(\lambda-1)(n-1)}-e^{-a(\lambda-1)n }\big)^{m-1}}{e^{\lambda n}+ e^{\lambda (n-1)}} \\
            &+ e^{-a(\lambda-1)np }\times \\
            &\bigg[\frac{e^{\lambda n}\big(e^{-a(\lambda-1)n}-e^{-a(\lambda-1)(n+1) }\big)^2 + e^{\lambda (n-1)}\big(e^{-a(\lambda-1)(n-1)}-e^{-a(\lambda-1)n }\big)^2}{2e^{\lambda n}+ 2e^{\lambda (n-1)}} \bigg]^\frac{q}{2} \leqslant0,
        \end{aligned}
    \end{equation*}
    for any $n\in \mathbb{Z}$. That is
  \begin{equation}\label{eq3.35}
        1\leqslant 2^\frac{q}{2} (1-e^{-a(\lambda-1)})^{m-1-q} \bigg( \frac{1+e^{-\lambda}}{1+e^{(2a-1)\lambda-2a}} \bigg)^\frac{q}{2} \bigg(\frac{1-e^{((m-1)a-1)\lambda-a(m-1)}}{1+e^{-\lambda}} \bigg).
    \end{equation}
   It follows from $q<0$ that the RHS of \eqref{eq3.35} tends to infinity as $\lambda\rightarrow\infty$. Therefore, $u$ is a solution to \eqref{eq1} by choosing some large $\lambda$.

\end{proof}

\noindent\textbf{Author Contributions} The authors contributed equally to this work, and all authors reviewed the manuscript.

\noindent{\bf Acknowledgments}

A.T.D and D.T.Q  would like to thank Vietnam Institute for Advanced Study in Mathematics (VIASM) for hospitality.

\noindent\textbf{Data Availability} No datasets were generated or analyzed during the current study.

%\noindent\textbf{Funding}  Y. Liu was supported by China Scholarship Council (Grant No.202406200019). Y. Sun was supported by the National Natural 
% Science Foundation of China (Grant No.12371206).
\vskip 2ex
\noindent\textbf{Declarations}
\vskip 1ex
\noindent\textbf{Conflict of interest} The authors declare no conflict of interest.

\noindent\textbf{Ethical Approval} Not applicable
%    Text of artcle.  

%    Bibliographies can be prepared with BibTeX using amsplain,
%    amsalpha, or (for "historical" overviews) natbib style.
%\bibliographystyle{amsplain}
%    Insert the bibliography data here.
	\bibliographystyle{abbrv}
	\bibliography{weightedgraph,henon-ref2}

\begin{thebibliography}{10}

\bibitem{AT20}
D.~Andreucci and A.~F. Tedeev.
\newblock Asymptotic estimates for the {$p$}-{L}aplacian on infinite graphs
  with decaying initial data.
\newblock {\em Potential Anal.}, 53(2):677--699, 2020.

\bibitem{Yau15}
F.~Bauer, P.~Horn, Y.~Lin, G.~Lippner, D.~Mangoubi, and S.-T. Yau.
\newblock Li-{Y}au inequality on graphs.
\newblock {\em J. Differential Geom.}, 99(3):359--405, 2015.

\bibitem{CHZ22}
C.~Chang, B.~Hu, and Z.~Zhang.
\newblock Liouville-type theorems and existence of solutions for quasilinear
  elliptic equations with nonlinear gradient terms.
\newblock {\em Nonlinear Anal.}, 220:Paper No. 112873, 29, 2022.

\bibitem{Filip09}
R.~Filippucci.
\newblock Nonexistence of positive weak solutions of elliptic inequalities.
\newblock {\em Nonlinear Anal.}, 70(8):2903--2916, 2009.

\bibitem{FP10}
R.~Filippucci, P.~Pucci, and M.~Rigoli.
\newblock Nonlinear weighted {$p$}-{L}aplacian elliptic inequalities with
  gradient terms.
\newblock {\em Commun. Contemp. Math.}, 12(3):501--535, 2010.

\bibitem{Ge18}
H.~Ge.
\newblock A {$p$}-th {Y}amabe equation on graph.
\newblock {\em Proc. Amer. Math. Soc.}, 146(5):2219--2224, 2018.

\bibitem{Ge20}
H.~Ge.
\newblock The {$p$}th {K}azdan-{W}arner equation on graphs.
\newblock {\em Commun. Contemp. Math.}, 22(6):1950052, 17, 2020.

\bibitem{GW25}
Y.~Ge and L.~Wang.
\newblock {$p $}-{L}aplace elliptic inequalities on the graph.
\newblock {\em Commun. Pure Appl. Anal.}, 24(3):389--411, 2025.

\bibitem{Gri85}
A.~Grigor'yan.
\newblock The existence of positive fundamental solutions of the {L}aplace
  equation on {R}iemannian manifolds.
\newblock {\em Mat. Sb. (N.S.)}, 128(170)(3):354--363, 446, 1985.

\bibitem{Gr18}
A.~Grigor'yan.
\newblock {\em Introduction to analysis on graphs}, volume~71 of {\em
  University Lecture Series}.
\newblock American Mathematical Society, Providence, RI, 2018.

\bibitem{GLY16b}
A.~Grigor'yan, Y.~Lin, and Y.~Yang.
\newblock Kazdan-{W}arner equation on graph.
\newblock {\em Calc. Var. Partial Differential Equations}, 55(4):Art. 92, 13,
  2016.

\bibitem{GLY17}
A.~Grigor'yan, Y.~Lin, and Y.~Yang.
\newblock Existence of positive solutions to some nonlinear equations on
  locally finite graphs.
\newblock {\em Sci. China Math.}, 60(7):1311--1324, 2017.

\bibitem{GS14}
A.~Grigor'yan and Y.~Sun.
\newblock On nonnegative solutions of the inequality {$\Delta u+u^\sigma\leq0$}
  on {R}iemannian manifolds.
\newblock {\em Comm. Pure Appl. Math.}, 67(8):1336--1352, 2014.

\bibitem{GSV20}
A.~Grigor'yan, Y.~Sun, and I.~Verbitsky.
\newblock Superlinear elliptic inequalities on manifolds.
\newblock {\em J. Funct. Anal.}, 278(9):108444, 34, 2020.

\bibitem{GHS23}
Q.~Gu, X.~Huang, and Y.~Sun.
\newblock Semi-linear elliptic inequalities on weighted graphs.
\newblock {\em Calc. Var. Partial Differential Equations}, 62(2):Paper No. 42,
  14, 2023.

\bibitem{HS21}
X.~l. Han and M.~Q. Shao.
\newblock {$p$}-{L}aplacian equations on locally finite graphs.
\newblock {\em Acta Math. Sin. (Engl. Ser.)}, 37(11):1645--1678, 2021.

\bibitem{HS23}
L.~Hao and Y.~Sun.
\newblock Sharp {L}iouville type results for semilinear elliptic inequalities
  involving gradient terms on weighted graphs.
\newblock {\em Discrete Contin. Dyn. Syst. Ser. S}, 16(6):1484--1516, 2023.

\bibitem{Hol99}
I.~Holopainen.
\newblock Volume growth, {G}reen's functions, and parabolicity of ends.
\newblock {\em Duke Math. J.}, 97(2):319--346, 1999.

\bibitem{HL21}
B.~Hua and R.~Li.
\newblock The existence of extremal functions for discrete {S}obolev
  inequalities on lattice graphs.
\newblock {\em J. Differential Equations}, 305:224--241, 2021.

\bibitem{HLW23}
B.~Hua, R.~Li, and L.~Wang.
\newblock A class of semilinear elliptic equations on groups of polynomial
  growth.
\newblock {\em J. Differential Equations}, 363:327--349, 2023.

\bibitem{HM15}
B.~Hua and D.~Mugnolo.
\newblock Time regularity and long-time behavior of parabolic {$p$}-{L}aplace
  equations on infinite graphs.
\newblock {\em J. Differential Equations}, 259(11):6162--6190, 2015.

\bibitem{HLY20}
A.~Huang, Y.~Lin, and S.-T. Yau.
\newblock Existence of solutions to mean field equations on graphs.
\newblock {\em Comm. Math. Phys.}, 377(1):613--621, 2020.

\bibitem{Karp82}
L.~Karp.
\newblock Subharmonic functions, harmonic mappings and isometric immersions.
\newblock In {\em Seminar on {D}ifferential {G}eometry}, volume No. 102 of {\em
  Ann. of Math. Stud.}, pages 133--142. Princeton Univ. Press, Princeton, NJ,
  1982.

\bibitem{LW17}
Y.~Lin and Y.~Wu.
\newblock The existence and nonexistence of global solutions for a semilinear
  heat equation on graphs.
\newblock {\em Calc. Var. Partial Differential Equations}, 56(4):Paper No. 102,
  22, 2017.

\bibitem{LW18}
Y.~Lin and Y.~Wu.
\newblock Blow-up problems for nonlinear parabolic equations on locally finite
  graphs.
\newblock {\em Acta Math. Sci. Ser. B (Engl. Ed.)}, 38(3):843--856, 2018.

\bibitem{LY20}
S.~Liu and Y.~Yang.
\newblock Multiple solutions of {K}azdan-{W}arner equation on graphs in the
  negative case.
\newblock {\em Calc. Var. Partial Differential Equations}, 59(5):Paper No. 164,
  15, 2020.

\bibitem{Liu23}
Y.~Liu.
\newblock Nonexistence of global solutions for a class of nonlinear parabolic
  equations on graphs.
\newblock {\em Bull. Malays. Math. Sci. Soc.}, 46(6):Paper No. 189, 22, 2023.

\bibitem{Liu24}
Y.~Liu.
\newblock Existence and {N}onexistence of {G}lobal {S}olutions to the
  {P}arabolic {E}quations on {L}ocally {F}inite {G}raphs.
\newblock {\em Results Math.}, 79(4):Paper No. 164, 2024.

\bibitem{MDQ25}
N.~C. Minh, A.~T. Duong, and D.~T. Quyet.
\newblock Liouville-type theorems for systems of elliptic inequalities
  involving $p$-laplace operator on weighted graphs.
\newblock {\em Commun. Pure Appl. Anal.}, 24(4):641--660, 2025.

\bibitem{MP01}
E.~Mitidieri and S.~I. Pohozaev.
\newblock A priori estimates and the absence of solutions of nonlinear partial
  differential equations and inequalities.
\newblock {\em Tr. Mat. Inst. Steklova}, 234:1--384, 2001.

\bibitem{MPS23}
D.~D. Monticelli, F.~Punzo, and J.~Somaglia.
\newblock Nonexistence results for semilinear elliptic equations on weighted
  graphs.
\newblock {\em Math. Ann.}, 2025.

\bibitem{MPS24}
D.~D. Monticelli, F.~Punzo, and J.~Somaglia.
\newblock Nonexistence of solutions to parabolic problems with a potential on
  weighted graphs.
\newblock {\em J. Differential Equations}, 453:Paper No. 113782, 24, 2026.

\bibitem{PZ26-1}
F.~Punzo and E.~Zucchero.
\newblock On a semilinear heat equation on infinite graphs i: blow-up for large
  initial data, 2026.

\bibitem{PZ26-2}
F.~Punzo and E.~Zucchero.
\newblock On a semilinear heat equation on infinite graphs ii: blow-up for
  arbitrary initial data and global existence, 2026.

\bibitem{Saloff-Coste95}
L.~Saloff-Coste.
\newblock Inequalities for {$p$}-superharmonic functions on networks.
\newblock {\em Rend. Sem. Mat. Fis. Milano}, 65:139--158, 1995.

\bibitem{Saloff-Coste97}
L.~Saloff-Coste.
\newblock Some inequalities for superharmonic functions on graphs.
\newblock {\em Potential Anal.}, 6(2):163--181, 1997.

\bibitem{SXX22}
Y.~Sun, J.~Xiao, and F.~Xu.
\newblock A sharp {L}iouville principle for {$\Delta_m u+u^p|\nabla u|^q\le0$}
  on geodesically complete noncompact {R}iemannian manifolds.
\newblock {\em Math. Ann.}, 384(3-4):1309--1341, 2022.

\bibitem{Varo83}
N.~T. Varopoulos.
\newblock Potential theory and diffusion on {R}iemannian manifolds.
\newblock In {\em Conference on harmonic analysis in honor of {A}ntoni
  {Z}ygmund, {V}ol. {I}, {II} ({C}hicago, {I}ll., 1981)}, Wadsworth Math. Ser.,
  pages 821--837. Wadsworth, Belmont, CA, 1983.

\bibitem{Woess00}
W.~Woess.
\newblock {\em Random walks on infinite graphs and groups}, volume 138 of {\em
  Cambridge Tracts in Mathematics}.
\newblock Cambridge University Press, Cambridge, 2000.

\bibitem{Wu21}
Y.~Wu.
\newblock Blow-up for a semilinear heat equation with {F}ujita's critical
  exponent on locally finite graphs.
\newblock {\em Rev. R. Acad. Cienc. Exactas F\'{\i}s. Nat. Ser. A Mat. RACSAM},
  115(3):Paper No. 133, 16, 2021.

\bibitem{Wu24}
Y.~Wu.
\newblock Blow-up conditions for a semilinear parabolic system on locally
  finite graphs.
\newblock {\em Acta Math. Sci. Ser. B (Engl. Ed.)}, 44(2):609--631, 2024.

\bibitem{ZL18}
X.~Zhang and A.~Lin.
\newblock Positive solutions of {$p$}-th {Y}amabe type equations on graphs.
\newblock {\em Front. Math. China}, 13(6):1501--1514, 2018.

\bibitem{ZL19}
X.~Zhang and A.~Lin.
\newblock Positive solutions of {$p$}-th {Y}amabe type equations on infinite
  graphs.
\newblock {\em Proc. Amer. Math. Soc.}, 147(4):1421--1427, 2019.

\end{thebibliography}
\end{document}